\pgfplotsset{compat=1.15}
\def\@tocline#1#2#3#4#5#6#7{\relax
	\ifnum #1>\c@tocdepth 
	\else
	\par \addpenalty\@secpenalty\addvspace{#2}%
	\begingroup \hyphenpenalty\@M
	\@ifempty{#4}{%
		\@tempdima\csname r@tocindent\number#1\endcsname\relax
	}{%
		\@tempdima#4\relax
	}%
	\parindent\z@ \leftskip#3\relax
	\advance\leftskip\@tempdima\relax
	\rightskip\@pnumwidth plus4em \parfillskip-\@pnumwidth
	#5\leavevmode\hskip-\@tempdima
	\ifcase #1
	\or\or \hskip 2em \or \hskip 2em \else \hskip 3em \fi%
	#6\nobreak\relax
	\dotfill\hbox to\@pnumwidth{\@tocpagenum{#7}}\par
	\nobreak
	\endgroup
	\fi} 
\newtheorem{theorem}{Theorem}[section]
\newtheorem{lemma}[theorem]{Lemma}
\newtheorem{proposition}[theorem]{Proposition}
\newtheorem{corollary}[theorem]{Corollary}
\newtheorem{remark}[theorem]{Remark}
\crefname{section}{Sect.}{section}
\numberwithin{equation}{section}
\newcommand*\diff{\mathop{}\!\mathrm{d}}
\DeclareMathOperator{\supp}{supp}
\begin{document}
	\title[$L^p$ heat asymptotics on symmetric spaces for non-symmetric solutions]{$L^p$ asymptotics for the heat equation on symmetric spaces for non-symmetric solutions}
	
	\author{Effie Papageorgiou}
	
	\begin{abstract}
		The main goal of this work is to study the $L^p$-asymptotic behavior of
		solutions to the heat equation on arbitrary rank Riemannian
		symmetric spaces of non-compact type $G/K$ for non-bi-$K$ invariant initial data. For initial data $u_0$ compactly supported or in a weighted $L^1(G/K)$ space with a weight depending on $p\in [1, \infty]$, we introduce a mass function $M_p(u_0)(\cdot)$, and prove that if $h_t$ is the heat kernel on $G/K$, then 
		$$\|h_t\|_p^{-1}\,\|u_0\ast h_t \, - \,M_p(u_0)(\cdot)\,h_t\|_p \rightarrow 0 \quad \text{as} \quad t\rightarrow \infty.$$
		Interestingly, the $L^p$ heat concentration leads to completely different expressions of the mass function for $1\leq p <2$ and $2\leq p\leq \infty$. If we further assume that the initial data are bi-$K$-invariant, then our mass function boils down to the constant $\int_{G/K}u_0$ in the case $p=1$, and more generally to $\mathcal{H}{u_0}(i\rho(2/p-1))$ if $1\leq p<2$, and to $\mathcal{H}{u_0}(0)$ if $2\leq p \leq \infty$. Thus we improve upon results by V\'azquez, Anker et al, Naik et al,  clarifying the nature of the problem. 
	\end{abstract}
	
	\keywords{non-compact symmetric space, heat equation, asymptotic behavior, long-time convergence}

	\makeatletter
	\@namedef{subjclassname@2020}{\textnormal{2020}
		\it{Mathematics Subject Classification}}
	\makeatother
	\subjclass[2020]{22E30, 35B40, 35K05, 58J35}
	
	\maketitle
	\tableofcontents

	\section{Introduction}\label{Section.1 Intro}

The heat equation is a fundamental partial differential equation in mathematics. Since Joseph Fourier's renowned work in 1822, it has been extensively studied across various contexts and is recognized as a key element in multiple areas of mathematics (see, for example, \cite{Gri2009}). The following classical result on long-time asymptotic convergence is analogous to the Central Limit Theorem in probability, but within the framework of partial differential equations. For more details on this property, we refer to the expository survey \cite{Vaz2018}.

\begin{theorem}
	Consider the heat equation
	\begin{align}\label{S1 HE intro}
		\begin{cases}
			\partial_{t}u(t,x)\,
			=\,\Delta_{\mathbb{R}^{n}}u(t,x),
			\qquad\,t>0,\,\,x\in\mathbb{R}^{n}\\[5pt]
			u(0,x)\,=\,u_{0}(x),
		\end{cases}
	\end{align}
	where the initial data $u_{0}$ belongs to $L^{1}(\mathbb{R}^n)$.
	Denote by $M=\int_{\mathbb{R}^n}\diff{x}\,u_{0}(x)$ the mass of $u_{0}$
	and by $G_{t}(x)\,=\,(4\pi{t})^{-n/2}e^{-|x|^{2}/4t}$ the heat kernel.
	Then the solution to \eqref{S1 HE intro} satisfies:
	\begin{align}\label{S1 L1 R}
		\|u(t,\,\cdot\,)-MG_{t}\|_{L^{1}(\mathbb{R}^n)}\,
		\longrightarrow\,0
	\end{align}
	and
	\begin{align}\label{S1 Linf R}
		t^{\frac{n}{2}}\,\|u(t,\,\cdot\,)-MG_{t}\|_{L^{\infty}(\mathbb{R}^n)}\,
		\longrightarrow\,0
	\end{align}
	as $t\rightarrow\infty$. The $L^p$ ($1<p<\infty$) norm estimates 
	follow by interpolation:
	\begin{align}\label{S1 Lp R}
		t^{\frac{n}{2p'}}\,\|u(t,\,\cdot\,)-MG_{t}\|_{L^{p}(\mathbb{R}^n)}\,
		\longrightarrow\,0,
	\end{align}
	where $\|G_t\|_{L^p(\mathbb{R}^n)}\asymp t^{\frac{n}{2p'}}$, $1\leq p \leq \infty$. Here, $p'$ denotes the dual exponent of $p$, defined by the formula
	$\tfrac{1}{p}+\tfrac{1}{p'}=1$.
\end{theorem}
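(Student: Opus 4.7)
The plan is to exploit the self-similar structure of the Gaussian kernel $G_t(x)=t^{-n/2}g(x/\sqrt t)$ with $g(z)=(4\pi)^{-n/2}e^{-|z|^2/4}$, and to reduce everything to the identity
\[
u(t,x)-MG_t(x) \,=\, \int_{\mathbb{R}^n} u_0(y)\,\bigl[G_t(x-y)-G_t(x)\bigr]\,\diff y.
\]
I would first establish the two endpoint statements \eqref{S1 L1 R} and \eqref{S1 Linf R} for $u_0$ compactly supported, then extend by a density argument to all of $L^1(\mathbb{R}^n)$, and finally obtain \eqref{S1 Lp R} by interpolation.

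For the $L^1$ case, Minkowski's integral inequality gives
\[
\|u(t,\cdot)-MG_t\|_1 \,\leq\, \int |u_0(y)|\,\|G_t(\cdot-y)-G_t\|_1\,\diff y.
\]
The scaling $x=\sqrt t\,z$ yields the $t$-free expression $\|G_t(\cdot-y)-G_t\|_1 = \|g(\cdot-y/\sqrt t)-g\|_1$, which vanishes as $t\to\infty$ by continuity of translations in $L^1$. Dominated convergence closes the compactly supported case; the passage to general $u_0\in L^1$ is standard, approximating $u_0$ in $L^1$ by compactly supported functions and using $\|G_t\|_1=1$ so that both $\|u(t,\cdot)-u^\varepsilon(t,\cdot)\|_1$ and $|M-M^\varepsilon|\,\|G_t\|_1$ stay small uniformly in $t$.

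The $L^\infty$ case is slightly more delicate because uniform decay in $x$ is required. After the same rescaling,
\[
t^{n/2}\bigl(u(t,x)-MG_t(x)\bigr) \,=\, \int u_0(y)\,\bigl[g((x-y)/\sqrt t)-g(x/\sqrt t)\bigr]\,\diff y,
\]
and for $u_0$ supported in a ball $B_R$ the mean value theorem bounds the bracket uniformly in $x$ by $\|\nabla g\|_\infty\,|y|/\sqrt t$, giving an explicit $O(t^{-1/2})$ rate. The density extension proceeds as before since $t^{n/2}\|G_t\|_\infty=\|g\|_\infty$.

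Finally, for $1<p<\infty$, the statement follows from log-convexity of $L^p$ norms combined with the prescribed normalization $\|G_t\|_p\asymp t^{-n/(2p')}$:
\[
t^{n/(2p')}\|u(t,\cdot)-MG_t\|_p \,\leq\, \|u(t,\cdot)-MG_t\|_1^{1/p}\,\bigl[t^{n/2}\|u(t,\cdot)-MG_t\|_\infty\bigr]^{1/p'}.
\]
The main obstacle I anticipate is the uniform-in-$x$ estimate underlying the $L^\infty$ decay: this is precisely where the self-similar scaling plus smoothness of the profile $g$ must be used, and where handling compactly supported data first, before extending by density, is essential.
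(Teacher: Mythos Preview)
The paper does not actually prove this theorem: it is stated in the introduction as a classical result, with a reference to the survey \cite{Vaz2018}, and serves only as motivation for the paper's results on symmetric spaces. So there is no ``paper's own proof'' to compare against.

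Your argument is correct and is essentially the standard one. A couple of minor remarks. For the $L^1$ endpoint you do not even need the compactly supported reduction: since $\|g(\cdot-y/\sqrt t)-g\|_1\le 2$ for every $y$ and $|u_0|\in L^1$, dominated convergence applies directly to all $u_0\in L^1(\mathbb{R}^n)$. For the $L^\infty$ endpoint, your mean-value bound plus density is exactly right; the key uniformity in $x$ is indeed what the smoothness of $g$ provides. Your interpolation step is also fine (note the paper's line $\|G_t\|_p\asymp t^{n/(2p')}$ is a misprint for $t^{-n/(2p')}$, consistent with the normalizing factor $t^{n/(2p')}$ in \eqref{S1 Lp R}).
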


On manifolds of non-negative Ricci curvature, the results \eqref{S1 Lp R} were generalized in \cite{GPZ2022}. Analogous properties on negatively curved manifolds
were first investigated by Vázquez in his recent work \cite{VazHyp},
which deals with real hyperbolic spaces $\mathbb{H}^n(\mathbb{R})$. More precisely, it was shown in \cite{VazHyp} that on $\mathbb{H}^3(\mathbb{R})$, (\ref{S1 L1 R}) fails for general absolutely integrable initial
data $u_0$ but is still true on $\mathbb{H}^n(\mathbb{R})$ if the function $
u_0$ is spherically symmetric around the origin. A generalization of these results was
obtained in \cite{APZ2023} in the more general setting of symmetric spaces of
non-compact type by using tools of harmonic analysis. Let us elaborate.
Given a symmetric space $\mathbb{X}=G/K$ of non compact type,
let us denote by $\Delta$ the Laplace-Beltrami operator on $\mathbb{X}$
and by $h_t$ the associated heat kernel, i.e.,
the bi-$K$-invariant convolution kernel of the semi-group $e^{t\Delta}$. The heat equation writes as follows:
\begin{align}\label{S1 HE X}
	\partial_{t}u(t,x)\,=\,\Delta_{x}u(t,x),
	\qquad
	u(0,x)\,=\,u_{0}(x), \quad x\in \mathbb{X}, \, t>0.
\end{align}
\begin{theorem}(\cite{APZ2023})\label{S1 Main thm 1 APZ}
	Let $u_{0}\in{L^{1}(\mathbb{X})}$ be a bi-$K$-invariant initial condition
	and $M=\int_{\mathbb{X}}\diff{x}\,u_{0}(x)$ be its mass. Then the solution to
	the heat equation \eqref{S1 HE X} satisfies
	\begin{align}\label{S1 Main thm 1 convergence APZ}
		\|u(t,\,\cdot\,)-Mh_{t}\|_{L^{1}(\mathbb{X})}\,
		\longrightarrow\,0
		\qquad\textnormal{as}\quad\,t\rightarrow\infty.
	\end{align}
	Moreover, this convergence fails in general without the bi-$K$-invariance assumption.
	In addition, for $p>1$ and $u_0\in L^1(\mathbb{X})$, the following hold
	\begin{align}
		\|u(t,\,\cdot\,)-Mh_{t}\|_{L^{\infty}(\mathbb{X})}\,
		&=\,
		\mathrm{O}\big(t^{-\frac{\nu}{2}}e^{-|\rho|^{2}t}\big) \label{LinftyX},  \\[5pt]
		\|u(t,\,\cdot\,)-Mh_{t}\|_{L^{p}(\mathbb{X})}\,
		&=\,\mathrm{o}\big(t^{-\frac{\nu}{2p'}}
		e^{-\tfrac{|\rho|^{2}t}{p'}}\big) \qquad \, 1<p<\infty, \quad \text{if} \quad u_0 \quad \text{is bi-$K$-invariant,} \label{LpX}
	\end{align}
	as $t\rightarrow\infty$.  Here $\nu$ is the so-called dimension at infinity of $\mathbb{X}$ and $|\rho|^2$ coincides with the bottom of the $L^2$ spectrum of the Laplace-Beltrami operator.
\end{theorem}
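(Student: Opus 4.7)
The plan is to prove the $L^1$ convergence \eqref{S1 Main thm 1 convergence APZ} first via direct heat kernel analysis combined with a density argument, and then derive \eqref{LinftyX} trivially and \eqref{LpX} by interpolation. The failure of \eqref{S1 Main thm 1 convergence APZ} without bi-$K$-invariance is treated separately through an explicit counterexample.

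For the $L^1$ convergence with bi-$K$-invariant $u_0$, I would first reduce by density to continuous, compactly supported, bi-$K$-invariant data; this is legitimate because $\|u_0\ast h_t-Mh_t\|_1\le 2\|u_0\|_1$ uniformly in $t$. Writing
\[u(t,x)-Mh_t(x)\,=\,\int_G u_0(y)\,\bigl(h_t(y^{-1}x)-h_t(x)\bigr)\,\diff y,\]
Fubini reduces matters to establishing
\[\sup_{y\,\in\,\supp(u_0)}\int_\mathbb{X}\bigl|h_t(y^{-1}x)-h_t(x)\bigr|\,\diff x\;\longrightarrow\;0\quad\text{as}\;t\to\infty,\]
for which I would invoke the Anker--Ji sharp large-time heat kernel asymptotics on $\mathbb{X}$, schematically
\[h_t(\exp H)\,\asymp\,t^{-\nu/2}\,e^{-|\rho|^2 t}\,\varphi_0(\exp H)\,\psi\bigl(H/\sqrt{t}\bigr)\]
in the diffusive regime, together with strong exponential decay in the tail. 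Bi-$K$-invariance of $u_0$ enters crucially here: after averaging over $K$-orbits, the spherical-function prefactor $\varphi_0$ cancels against itself and the translation error integrates to $\mathrm{o}(1)$.

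For the $L^\infty$ bound on general $u_0\in L^1(\mathbb{X})$, Young's inequality and the triangle inequality combined with the standard large-time estimate $\|h_t\|_\infty\asymp t^{-\nu/2}e^{-|\rho|^2 t}$ yield
\[\|u(t,\cdot)-Mh_t\|_\infty\,\le\,(\|u_0\|_1+|M|)\,\|h_t\|_\infty\,=\,\mathrm{O}\bigl(t^{-\nu/2}e^{-|\rho|^2 t}\bigr),\]
which is \eqref{LinftyX}. For bi-$K$-invariant $u_0$ and $1<p<\infty$, the logarithmic convexity inequality $\|f\|_p\le \|f\|_1^{1/p}\,\|f\|_\infty^{1/p'}$ applied to $f=u-Mh_t$ combines the $L^1$ little-$o$ with the $L^\infty$ big-$O$ to give
\[\|u-Mh_t\|_p\,=\,\mathrm{o}(1)\cdot\mathrm{O}\bigl(t^{-\nu/(2p')}e^{-|\rho|^2 t/p'}\bigr)\,=\,\mathrm{o}\bigl(t^{-\nu/(2p')}e^{-|\rho|^2 t/p'}\bigr),\]
which is \eqref{LpX}.

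For the failure of \eqref{S1 Main thm 1 convergence APZ} without bi-$K$-invariance, I would adapt V\'azquez's construction on $\mathbb{H}^3(\mathbb{R})$: taking $u_0$ as a localized bump at a fixed point $x_0\neq o$ makes $u(t,\cdot)$ an outgoing wave-packet carried near the sphere of radius $\sim 2t$ about $x_0$, whereas $Mh_t$ is carried near the sphere of radius $\sim 2t$ about $o$; the exponential volume growth of hyperbolic space prevents the $L^1$-defect between these two displaced packets from vanishing as $t\to\infty$. The hardest step is the $L^1$ vanishing of $\int_\mathbb{X}|h_t(y^{-1}x)-h_t(x)|\,\diff x$: because large-time heat kernels on $\mathbb{X}$ are exponentially modulated by $\varphi_0$ and concentrated on expanding spheres---very far from Euclidean Gaussians---a naive Taylor expansion in $y$ cannot be used uniformly in $x$. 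One must split $\mathbb{X}$ into a diffusive region $|x|\lesssim\sqrt{t}$ and an exponential tail, apply the matching asymptotic in each, and exploit the bi-$K$-invariance of $u_0$ to produce the cancellation that converts pointwise translation-continuity into $L^1$ convergence.
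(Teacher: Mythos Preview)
First, note that this theorem is quoted from \cite{APZ2023} and is not proved in the present paper; it serves as background. The paper's own \cref{S1 Main thm 2}, specialized to $p=1$ and bi-$K$-invariant data, does recover \eqref{S1 Main thm 1 convergence APZ}, and its proof proceeds by heat-kernel asymptotics in the $L^1$ critical region rather than on the frequency side as in \cite{APZ2023}. Your $L^\infty$ bound via Young's inequality and the interpolation argument for \eqref{LpX} are both correct and standard.

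There is, however, a genuine gap in your $L^1$ argument. You claim that Fubini reduces matters to
\[
\sup_{y\in\supp(u_0)}\int_{\mathbb{X}}\bigl|h_t(y^{-1}x)-h_t(x)\bigr|\,\diff x\;\longrightarrow\;0,
\]
but this quantity does \emph{not} tend to zero for $y\notin K$. Indeed, taking $u_0$ to be (an approximation of) the point mass at $yK$ gives $u_0*h_t=h_t(y^{-1}\,\cdot\,)$ and $M=1$, so $\|u_0*h_t-Mh_t\|_{L^1}$ is precisely the integral above---and this is exactly the counterexample you yourself invoke to show failure without bi-$K$-invariance. The bi-$K$-invariance of $u_0$ must be exploited \emph{before} taking absolute values inside the $y$-integral; once you have passed to the Fubini bound, $u_0$ has disappeared and no $K$-averaging can rescue the estimate. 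Your remark that ``after averaging over $K$-orbits the $\varphi_0$ factor cancels'' hints at the right mechanism but is inconsistent with the reduction you have already written down.

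A related misconception: you place the analysis in a ``diffusive region $|x|\lesssim\sqrt{t}$''. On $\mathbb{X}$ the $L^1$ mass of $h_t$ concentrates \emph{ballistically} in an annulus around $|x^{+}|\approx 2|\rho|t$ (this is \cref{thm: lp critical}(i) with $p=1$); the $\sqrt{t}$ scale is the $L^2$ critical region, not the $L^1$ one. The correct approach---whether via spherical transform as in \cite{APZ2023} or via heat-kernel quotients as in the present paper---works in this ballistic annulus, where translations by bounded $y$ produce controllable errors after the $K$-averaging afforded by bi-$K$-invariance of $u_0$.
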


Interestingly, these results for $p>1$ are not of Euclidean flavor, in the sense of convergence in \eqref{S1 Lp R}; more precisely, given that $\|h_t\|_{\infty}\asymp  t^{-\frac{\nu}{2}}e^{-|\rho|^{2}t}$ for $t>1$, it follows from \eqref{LinftyX} that $\|h_t\|_{\infty}^{-1} \,\|u(t,\,\cdot\,)-\,M \,h_{t}\|_{L^{\infty}(\mathbb{X})}\centernot\longrightarrow\,0 $ (see \cite[Remark 3.6]{APZ2023} for details). Even further, the right hand side of \eqref{LpX} is not comparable to $\|h_t\|_p$ for any $p\in (1, \infty)$. This prompted Naik et al in \cite{NRS24} to seek, instead of the mass $M$, a complex number $z$ (which turns out to be unique, if one restricts to a certain class of bi-$K$-invariant initial data), so that for appropriate bi-$K$-invariant data one has 
$$\|h_t\|_p^{-1}	\, \|u(t,\,\cdot\,)\, - z\,h_{t}\|_{L^{p}(\mathbb{X})}\,
\longrightarrow\,0
\qquad\textnormal{as}\quad\,t\rightarrow\infty.$$ 
Let us state their result. We say that a function $u_0:\mathbb{X}\rightarrow \mathbb{C}$ belongs to $\mathcal{L}^p(G//K)$,  $p\in [1, \infty]$, if $u_0$ is bi-$K$-invariant and 
$$\int_{G}\textrm{d}x\, |u_0(x)|\, \varphi_{i(\frac{2}{p}-1)\rho}(x)<+\infty, \qquad \text{if} \quad 1\leq p<2, $$
and 
$$\int_{G}\textrm{d}x\, |u_0(x)|\, \varphi_{0}(x)<+\infty, \qquad \text{if} \quad 2\leq p\leq \infty, $$
where $\varphi_{\lambda}$ denotes the elementary spherical function of index $\lambda$.
Then, define accordingly 
$$M_p:=\int_{G}\textrm{d}x\, u_0(x)\, \varphi_{i(\frac{2}{p}-1)\rho}(x)=\mathcal{H}u_0\left(i(2/p-1)\rho\right)\qquad\text{if} \quad 1\leq p<2, $$
and
$$M_p:=\int_{G}\textrm{d}x\, u_0(x)\, \varphi_{0}(x)=\mathcal{H}u_0(0), \qquad \text{if} \quad 2\leq p\leq \infty. $$

\begin{theorem} (\cite{NRS24}) 
	Let $u_{0}\in \mathcal{L}^p(G//K)$
	and $M_p$ be defined as above. Then the solution to
	the heat equation \eqref{S1 HE X} with initial condition $u_0$ satisfies
	\begin{align}\label{Ind convergence}
		\|h_t\|_p^{-1}	\, \|u(t,\,\cdot\,)\,- \,M_p\,h_{t}\|_{L^{p}(\mathbb{X})}\,
		\longrightarrow\,0
		\qquad\textnormal{as}\quad\,t\rightarrow\infty.
	\end{align}
\end{theorem}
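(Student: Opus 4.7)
The strategy is to exploit bi-$K$-invariance via the spherical Fourier transform $\mathcal{H}$, under which the convolution becomes multiplication:
$$\mathcal{H}(u_0 \ast h_t)(\lambda) = \mathcal{H}u_0(\lambda)\, e^{-t(|\lambda|^2 + |\rho|^2)}.$$
The hypothesis $u_0 \in \mathcal{L}^p(G//K)$ ensures that $\mathcal{H}u_0$ is well defined on the closed tube $T_p = \{|\im \lambda| \le (2/p-1)|\rho|\}$ for $1 \le p < 2$ (respectively at $\lambda = 0$ if $2 \le p \le \infty$), because $|\varphi_\lambda(x)| \le \varphi_{i(2/p-1)\rho}(x)$ there, and it is holomorphic in the interior of $T_p$ by differentiation under the integral. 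The mass $M_p$ is exactly the value of $\mathcal{H}u_0$ at the spectral point to which the $L^p$-mass of the heat kernel concentrates: $\lambda_p = 0$ for $p \ge 2$, and $\lambda_p = i(2/p-1)\rho$ for $1 \le p < 2$. I would split the analysis into these two regimes.

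For $2 \le p \le \infty$ one has $M_p = \mathcal{H}u_0(0) = \int_G u_0(y)\,\varphi_0(y)\,\diff y$, and writing
$$u_0 \ast h_t(x) - M_p\, h_t(x) = \int_G u_0(y) \bigl[h_t(y^{-1}x) - \varphi_0(y)\, h_t(x)\bigr]\, \diff y,$$
matters reduce to the pointwise factorization $h_t(y^{-1}x)/h_t(x) \to \varphi_0(y)$ as $t \to \infty$, uniform on compacta in $y$, which follows from the Anker--Ji sharp heat kernel asymptotics. For the spectral endpoint $p = 2$ a transparent Plancherel argument is available: after the rescaling $\lambda = \mu/\sqrt{t}$ in
$$\int_{\mathfrak{a}^*} |\mathcal{H}u_0(\lambda) - \mathcal{H}u_0(0)|^2\, e^{-2t(|\lambda|^2 + |\rho|^2)} |\mathbf{c}(\lambda)|^{-2}\, \diff \lambda,$$
dominated convergence (using continuity of $\mathcal{H}u_0$ at $0$) yields the $o(\|h_t\|_2^2)$ bound. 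The endpoint $p = \infty$ follows from dominated convergence in the $y$-integral above with majorant $|u_0(y)|\varphi_0(y) \in L^1$, and the intermediate values $2 < p < \infty$ by Riesz--Thorin interpolation using the known sharp rates of $\|h_t\|_p$.

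For $1 \le p < 2$ the mass $M_p = \mathcal{H}u_0(i(2/p-1)\rho)$ sits on the boundary of the holomorphy tube, so no interior Taylor expansion is available. The plan is to represent
$$u_0 \ast h_t(x) - M_p\, h_t(x) = c \int_{\mathfrak{a}^*} \bigl[\mathcal{H}u_0(\lambda) - M_p\bigr] e^{-t(|\lambda|^2 + |\rho|^2)}\, \varphi_\lambda(x)\, |\mathbf{c}(\lambda)|^{-2}\, \diff \lambda,$$
and to shift the contour from $\mathfrak{a}^*$ to $\mathfrak{a}^* + i(2/p-1)\rho$. Using the identity $|\lambda + i(2/p-1)\rho|^2 + |\rho|^2 = |\lambda|^2 + \tfrac{4|\rho|^2}{pp'} + 2i(2/p-1)\langle \lambda, \rho\rangle$, the heat multiplier on the shifted contour factors as $e^{-t|\lambda|^2}\, e^{-4t|\rho|^2/(pp')}\, e^{-2it(2/p-1)\langle \lambda, \rho\rangle}$, whose real exponential prefactor $e^{-4t|\rho|^2/(pp')}$ matches exactly the sharp exponential rate of $\|h_t\|_p$. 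On the shifted contour the bracket $\mathcal{H}u_0(\lambda + i(2/p-1)\rho) - M_p$ vanishes at $\lambda = 0$, so the rescaling $\lambda \mapsto \mu/\sqrt{t}$ combined with dominated convergence delivers the desired $o(\|h_t\|_p)$ estimate. The principal obstacle is justifying the contour shift and controlling the resulting error in $L^p$-norm uniformly in $x$ as $t \to \infty$; this calls for holomorphy and decay estimates of $\mathcal{H}u_0$ on $T_p$, the Anker--Ji polynomial-times-Gaussian bounds in order to control $|\varphi_\lambda(x)|$ for complex $\lambda$ on the shifted contour, and a careful $L^p$ comparison of the shifted-contour integral against $\|h_t\|_p$. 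A convenient reduction is to treat first compactly supported bi-$K$-invariant $u_0$ (where the Paley--Wiener theorem makes $\mathcal{H}u_0$ entire of exponential type), and then extend by density in $\mathcal{L}^p(G//K)$.
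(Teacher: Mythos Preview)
Your frequency-side strategy is natural and is in fact close in spirit to the original approach of the cited reference. However, the Riesz--Thorin step for $2<p<\infty$ has a genuine gap. The $L^p$ norms of the heat kernel do \emph{not} interpolate across $p=2$: for large $t$,
\[
\|h_t\|_2 \asymp t^{-\nu/4}\,e^{-|\rho|^2 t},\qquad
\|h_t\|_\infty \asymp t^{-\nu/2}\,e^{-|\rho|^2 t},\qquad
\|h_t\|_p \asymp t^{-\nu/2}\,e^{-|\rho|^2 t}\quad(2<p<\infty),
\]
with $\nu=\ell+2|\Sigma_r^+|$. Hence interpolation with $\theta=2/p$ yields only
\[
\|u_0*h_t - M_p\,h_t\|_p \le \|\cdot\|_2^{2/p}\,\|\cdot\|_\infty^{1-2/p}
= o\bigl(t^{-\nu/2 + \nu/(2p)}\,e^{-|\rho|^2 t}\bigr),
\]
which misses the target $o(\|h_t\|_p)=o\bigl(t^{-\nu/2}\,e^{-|\rho|^2 t}\bigr)$ by the diverging polynomial factor $t^{\nu/(2p)}$. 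This is precisely the phenomenon flagged in the introduction of the paper: the $L^p$ heat-kernel norm for $p>2$ is \emph{not} the interpolant of the $L^2$ and $L^\infty$ norms, so convergence at the endpoints cannot be upgraded to the intermediate range by interpolation alone. You need a direct argument for each $p\in(2,\infty)$. A smaller caveat: your $p=\infty$ sketch via dominated convergence in $y$ also needs care, since the na\"ive majorant $|u_0(y)|\bigl(1+\varphi_0(y)\bigr)$ is not guaranteed to be in $L^1$ under the sole hypothesis $\int|u_0|\varphi_0<\infty$, and the sup over $x$ must be handled before passing to the limit.

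The paper takes an entirely different route, working on the space side. It proves the more general non-bi-$K$-invariant statement first and then specialises. The core device is the $L^p$ \emph{critical region} of Anker--Ji: for $p>2$ this is $\{|g|\le R(t)\}$ with $\log t\ll R(t)\ll\sqrt t$, and the key lemma shows
\[
\frac{h_t(y^{-1}g)}{h_t(g)} - \frac{\varphi_0(y^{-1}g)}{\varphi_0(g)} = \mathrm{O}\!\left(\frac{R(t)}{\sqrt t}\right)
\]
uniformly for $g$ in the critical region and $y$ bounded, which handles all $p\in[2,\infty]$ simultaneously. Outside the critical region the $L^p$ mass of $h_t$ is already $o(\|h_t\|_p)$, so a crude bound suffices there. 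For bi-$K$-invariant data the mass function collapses to the constant $M_p=\mathcal{H}u_0(0)$ by a one-line computation, and the extension from $\mathcal{C}_c$ to $\mathcal{L}^p(G//K)$ is by density plus the Herz principle. This spatial approach sidesteps both the interpolation pitfall and the delicate contour-shift/$L^p$-control issues you acknowledge in the range $1\le p<2$.
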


Observe that when $u_0\in \mathcal{L}^{1}(G//K)={L}^{1}(G//K)$, $M_1=\int_{G}\textrm{d}x\, u_0(x)$.
Thus the convergence \eqref{Ind convergence} agrees with the bi-$K$-invariant $L^1$ result 
previously obtained in \cite{VazHyp} on $\mathbb{H}^{n}(\mathbb{R})$, as well as the bi-$K$-invariant $L^1$ results of \cite{APZ2023} on arbitrary rank symmetric spaces and gives an analogue of the Euclidean $L^p$ convergence in the spirit of \cite{Vaz2018}. 

However, the result above shows $L^p$ convergence restricted to \textit{bi-$K$-invariant} initial data. In the non-bi-$K$-invariant case, V{\'a}zquez showed  in three-dimensional hyperbolic space that \eqref{Ind convergence} for $p=1$ fails for non-radial initial data, and we established in \cite{APZ2023} such a counterexample for $p=1$ not only for $\mathbb{H}^{n}(\mathbb{R})$,
$n\neq3$, but also for the symmetric spaces of general rank. Here, we illuminate this phenomenon by pushing the strong $L^p$ convergence of \eqref{Ind convergence} to \textit{non-bi-$K$-invariant initial data}. To do this, we replace the constant $M_p$ by a \textit{function} $M_p(u_0)(gK)$, $g\in G$, which, however, boils down to the constants above, if the initial datum is bi-$K$-invariant. In this way, the counterexamples mentioned above, obtained simply by displaced heat kernels $h_t(yK, \cdot K)$, $y\notin K$, now obey convergence. Our proof is different and shorter than the one pursued in \cite{NRS24}, which follows ideas from \cite{APZ2023} where the main work was done on the \enquote{frequency} side; instead, our approach mainly relies on heat kernels asymptotics, eventually obtaining more information this way.

To introduce our results, let us introduce some notation. Recall the Cartan decomposition $G=K\overline{A^{+}}K$ and the Iwasawa decomposition $G=NAK$ of the group $G$, according to which every $g\in G$ can be written as 
$g=k(g)(\exp g^{+})k'(g)$ and $g=n(g)(\exp A(g))\kappa(g)$, respectively.
Consider first the  weights $w_p$ on $\mathbb{X}$:
$$w_p(gK):=e^{\frac{2}{p}\langle \rho, g^{+} \rangle},  \quad 1\leq p \leq 2, \quad \text{and} \quad w_p(gK):=e^{\langle \rho, g^{+} \rangle}, \quad 2\leq p\leq \infty.$$
Next, consider the following weighted $L^1$ space defined by 
$$L_{w_p}(\mathbb{X}):=\left\{u_0:\mathbb{X}\rightarrow \mathbb{C}: \,  \int_{G}\textrm{d}x\, |u_0(x)|\, w_p(x)<+\infty\right\}.$$
Define also the mass functions by 
$$M_p(u_0)(g):=\int_G \diff{y}\,u_{0}(yK)\,e^{\frac{2}{p}\,\langle \rho , A(k^{-1}(g)y)\rangle}, \qquad 1\leq p\leq 2$$ 
and by
$$M_p(u_0)(g):=\frac{(u_{0}*\varphi_{0})(g)}{\varphi_{0}(g)}, \qquad 2\leq p \leq \infty,$$
(any of the two masses can be used in the critical case $p=2$).
Then our main result is the following.

\begin{theorem}\label{S1 Main thm 2}
	Let $p\in[1, \infty]$, and let $u_{0}\in \mathcal{C}_c(\mathbb{X})$, $u_0\in \mathcal{L}^p(G//K)$ or $u_0\in L_{w_p}(\mathbb{X})\subseteq L^1(\mathbb{X})$.
	Consider $M_p(u_0)(\cdot)$ as above. Then the solution to
	the heat equation \eqref{S1 HE X} satisfies
	\begin{align}\label{S1 Main thm 2 convergence}
		\|h_t\|_p^{-1}	\, \|u(t,\,\cdot\,)-M_p(u_0)(\cdot)\,h_{t}\|_{L^{p}(\mathbb{X})}\,
		\longrightarrow\,0
		\qquad\textnormal{as}\quad\,t\rightarrow\infty.
	\end{align}
	In the bi-$K$-invariant case, the mass functions boil down to constants, that is $M_p(u_0)(gK)= \linebreak \mathcal{H}u_0\left(i(2/p-1)\rho\right)$ if $ 1\leq p\leq 2$,  $M_p(u_0)(gK)=\mathcal{H}u_0(0)$ if $ 2\leq p \leq \infty$, and  $L^1(G//K)\subseteq \mathcal{L}^p(G//K)$, $1\leq p \leq \infty$.
	
\end{theorem}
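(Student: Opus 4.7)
The plan is to express the error as a convolution integral in $y$, apply Minkowski's integral inequality, and reduce the problem to a pointwise-in-$y$ heat-kernel asymptotic that is then handled by dominated convergence. By the very definition of $M_p(u_0)(g)$, for every $g\in G$,
\begin{equation*}
u(t,g) - M_p(u_0)(g)\,h_t(g) \;=\; \int_G u_0(y)\,\bigl[h_t(y^{-1}g) - a_p(g,y)\,h_t(g)\bigr]\,\diff y,
\end{equation*}
where $a_p(g,y):=e^{\frac{2}{p}\langle\rho,A(k^{-1}(g)y)\rangle}$ for $1\leq p\leq 2$ and $a_p(g,y):=\varphi_0(y^{-1}g)/\varphi_0(g)$ for $2\leq p\leq\infty$. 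Minkowski's inequality for integrals bounds the $L^p$-norm of the left-hand side by
\begin{equation*}
\int_G |u_0(y)|\,\bigl\|h_t(y^{-1}\cdot)-a_p(\cdot,y)h_t(\cdot)\bigr\|_p\,\diff y.
\end{equation*}
Thus the task reduces to showing, for each fixed $y$, that $\|h_t\|_p^{-1}\bigl\|h_t(y^{-1}\cdot)-a_p(\cdot,y)h_t(\cdot)\bigr\|_p\to 0$ as $t\to\infty$, together with a $y$-dependent majorant that is integrable against $|u_0|$ in each of the three admissible classes.

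The pointwise-in-$y$ convergence rests on sharp heat-kernel asymptotics. For $p\geq 2$, the $L^p$-mass of $h_t$ concentrates where $|g^+|$ stays in a ball of radius of order $\sqrt{t}$; in that region the classical large-time behaviour $h_t(g)\sim c\,t^{-\nu/2}e^{-|\rho|^2 t}\,\varphi_0(g)$ is uniform on bounded sets, and the same asymptotic applied to $y^{-1}g$ gives $h_t(y^{-1}g)/h_t(g)\to \varphi_0(y^{-1}g)/\varphi_0(g)=a_p(g,y)$ uniformly for $y$ in a bounded set. For $1\leq p<2$, the $L^p$-mass of $h_t$ has migrated outward linearly in the direction of $\rho$; here one uses the sharp Anker-type two-sided estimate for $h_t(\exp H)$ (a Gaussian $e^{-|H|^2/(4t)-\langle\rho,H\rangle}$ times a polynomial prefactor), compares $h_t(y^{-1}g)$ to $h_t(g)$ by expanding in the bounded shift $(y^{-1}g)^+-g^+$, and observes that the Iwasawa cocycle then extracts precisely the factor $e^{\frac{2}{p}\langle\rho,A(k^{-1}(g)y)\rangle}$ to leading order. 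The critical $p=2$ is consistent with both formulas since $a_2(g,y)=\varphi_0(y^{-1}g)/\varphi_0(g)$ and $e^{\langle\rho,A(k^{-1}(g)y)\rangle}$ agree asymptotically on the overlap region.

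The bi-$K$-invariant reductions are short calculations. If $u_0$ is left $K$-invariant, then a change of variable $y\mapsto k(g)y$ kills the dependence on $g$ and, after averaging $\int_K\diff k$ and using the left $K$-invariance of $u_0$, turns the integrand into the elementary spherical function, giving $M_p(u_0)(g)=\int u_0(y)\varphi_{i(2/p-1)\rho}(y)\diff y=\mathcal{H}u_0\bigl(i(2/p-1)\rho\bigr)$ for $1\leq p\leq 2$. For $2\leq p\leq \infty$ and spherical $u_0$, the identity $u_0\ast\varphi_0=\mathcal{H}u_0(0)\,\varphi_0$ gives $M_p(u_0)(g)=\mathcal{H}u_0(0)$. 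The inclusion $L^1(G//K)\subseteq\mathcal{L}^p(G//K)$ follows from Helgason-Johnson: $\varphi_{i(2/p-1)\rho}$ is bounded on $G$ because $(2/p-1)\rho$ lies in the convex hull of $W\rho$ for every $p\in[1,\infty]$.

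The main obstacle is the dominating function: one must pin down a $\Phi_p(y)$ satisfying $\|h_t\|_p^{-1}\bigl\|h_t(y^{-1}\cdot)-a_p(\cdot,y)h_t(\cdot)\bigr\|_p\leq\Phi_p(y)$ uniformly in $t$, with $\Phi_p(y)$ integrable against $|u_0|$ simultaneously in each of the three classes (which dictates, respectively, that $\Phi_p$ be bounded by $w_p(y)$, $\varphi_{i(2/p-1)\rho}(y)$, and $\varphi_0(y)$ in the right regime). This calls for quantitative uniform-in-$g$ control of the Cartan shift $(y^{-1}g)^+-g^+$ and of the Iwasawa projection $A(k^{-1}(g)y)$ in terms of $y^+$ alone, combined with the polynomial-exponential envelope of the Anker-Ji heat-kernel bounds. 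A useful backstop, if a single unified majorant proves elusive, is to first prove the theorem for $u_0\in\mathcal{C}_c(\mathbb{X})$ and then extend to $L_{w_p}(\mathbb{X})$ and $\mathcal{L}^p(G//K)$ by a density-plus-approximation argument, exploiting the contractivity of convolution with $h_t$ on $L^p$ weighted by the corresponding weight.
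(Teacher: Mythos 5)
Your plan is a valid alternative organization of the proof, but it shifts rather than avoids the hard analysis, and the part you flag as the obstacle is actually the easy part. Let me be concrete.

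Your decomposition is to split in $y$ (Minkowski inequality plus dominated convergence over the support of $u_0$), whereas the paper splits in $g$ (the solution variable): it partitions $\mathbb{X}$ into the $L^p$ critical region $K\{\exp B_p(t)\}K$ of heat concentration and its complement, shows that the heat-kernel quotient $h_t(y^{-1}g)/h_t(g)$ converges to $a_p(g,y)$ uniformly over the critical region for $y$ in the support, bounds the complement by the concentration theorem, and extends from $\mathcal{C}_c$ to $L_{w_p}$ by a density argument. Both routes need the same hard ingredients: the heat-kernel asymptotics \eqref{S2 heat kernel critical region} in the critical region, the Cartan-shift geometry (Lemmas~\ref{S3 lemma distance behaviors} and~\ref{inner prod translation}), and the $L^p$ concentration phenomenon of Theorem~\ref{thm: lp critical}. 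Your pointwise-in-$y$ limit
\begin{equation*}
\|h_t\|_p^{-1}\,\bigl\|h_t(y^{-1}\cdot)-a_p(\cdot,y)\,h_t(\cdot)\bigr\|_{p}\longrightarrow 0
\end{equation*}
is essentially the theorem for $u_0=\delta_y$ and its proof requires exactly the critical-region split that the paper carries out; you sketch the mechanism but do not escape that work.

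What you present as the principal obstacle, the dominating function $\Phi_p(y)$, is in fact immediate. By the triangle inequality and the left-invariance of Haar measure, $\|h_t\|_p^{-1}\|h_t(y^{-1}\cdot)\|_p=1$, while for the second term one has the pointwise bounds $|a_p(g,y)|\le e^{\frac{2}{p}\langle\rho,y^+\rangle}=w_p(y)$ for $1\le p\le 2$ (Kostant convexity, \eqref{S2 ineq Iwasawa Cartan}) and $|a_p(g,y)|=\varphi_0(y^{-1}g)/\varphi_0(g)\le e^{\langle\rho,y^+\rangle}=w_p(y)$ for $2\le p\le\infty$ (Harnack inequality \eqref{Harnack}). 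Hence $\Phi_p(y)\le 1+w_p(y)\lesssim w_p(y)$, which is exactly integrable against $|u_0|$ for $u_0\in L_{w_p}(\mathbb{X})$ (and trivially for $\mathcal{C}_c$), and dominated convergence closes the argument. So your Minkowski-plus-DCT route does buy a mildly cleaner passage from $\mathcal{C}_c$ to $L_{w_p}$ than the paper's $\varepsilon/3$ density argument. Your backstop for $\mathcal{L}^p(G//K)$ is, however, not optional but necessary: for $1\le p<2$ the weight $w_p$ grows exponentially while $\varphi_{i(2/p-1)\rho}$ decays, so $w_p$-integrability is strictly stronger than membership in $\mathcal{L}^p(G//K)$, and the dominated-convergence wrapper cannot reach that class; one must, as the paper does, use that the mass reduces to a constant for bi-$K$-invariant data and invoke the known bi-$K$-invariant result together with a density/Herz argument. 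Your bi-$K$-invariant reductions and the inclusion $L^1(G//K)\subseteq\mathcal{L}^p(G//K)$ via boundedness of $\varphi_{i(2/p-1)\rho}$ are correct and match the paper.
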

To the best of our knowledge this is the first affirmative result, even for the case of hyperbolic space, dealing with the long-time convergence of the solution to the heat equation beyond the bi-$K$-invariant case. A mass function was considered in \cite{APZ2023} but only for the distinguished Laplacian. Clearly, if $u_{0}\in \mathcal{C}_c(\mathbb{X})$ then $u_0\in L_{w_p}(\mathbb{X})$, but we want to emphasize that all previously known non-symmetric solutions which served as counterexamples, namely displaced heat kernels, now obey convergence. It is an interesting non-Euclidean phenomenon that the mass should be a function rather than a constant, and that the $L^p$ propagation of the heat kernel results in dramatically different behaviors of the mass functions above and below the critical point $p=2$.  For the interested reader who is not familiar with higher rank symmetric spaces, we give  explicit computations of these mass functions and weights in the case of real hyperbolic space in the Appendix.  Our \cref{S1 Main thm 2} is a generalization of \eqref{Ind convergence} and of the main results in \cite{VazHyp, APZ2023, NRS24} for symmetric initial data, and settles the problem as it clarifies the long time behavior of solutions
which are not necessarily
symmetric.

Unlike the approach (mainly) pursued in \cite{APZ2023} or \cite{NRS24}, we do not work on the \enquote{frequency side}, that is, with the use of the spherical transform or the Helgason-Fourier transform (which, in the case that the mass is a function, would be hard to handle). Instead, we work directly on $\mathbb{X}$ with a careful analysis of the heat kernel in the so-called $L^p$ critical region (for all notions and notations, we refer the reader to Section \ref{Section.2 Prelim}).
On the other hand, whether the result of \cref{S1 Main thm 2}  holds true for all $L^{1}(\mathbb{X})$ initial data is still an open question for further study.

This paper is organized as follows. After the present introduction
in \cref{Section.1 Intro} and preliminaries in \cref{Section.2 Prelim}, 
we discuss $L^p$ critical regions and mass functions in 
\cref{Section.3 lp critical}. 
In \cref{Section.4 lp 1 to 2}, we prove our main results related to the $L^p$ convergence of solutions to the heat equation for $1\leq p <2$, while we in Section \cref{Section.5 lp 1 to infty} we treat the range $2\leq p\leq \infty$. An Appendix is given in the end with explicit computations for the real hyperbolic space case. 

Throughout this paper, the notation
$A\lesssim{B}$ between two positive expressions means that 
there is a constant $C>0$ such that $A\le{C}B$. 
The notation $A\asymp{B}$ means that $A\lesssim{B}$ and $B\lesssim{A}$. Also, $A(t)\sim B(t)$ means that $A(t)/B(t)\rightarrow 1$ as $t\rightarrow +\infty$.

\section{Preliminaries}\label{Section.2 Prelim}
In this section, we first review spherical Fourier analysis 
on Riemannian symmetric spaces of non-compact type. The notation is standard 
and follows \cite{Hel1978,Hel2000,GaVa1988}.
Next we recall the asymptotic concentration of the heat kernel.
We refer to \cite{AnJi1999,AnOs2003} for more details on the heat kernel 
analysis in this setting.
\subsection{Non-compact Riemannian symmetric spaces}
Let $G$ be a semi-simple Lie group, connected, non-compact, with finite center, 
and $K$ be a maximal compact subgroup of $G$. The homogeneous space 
$\mathbb{X}=G/K$ is a Riemannian symmetric space of non-compact type.
Let $\mathfrak{g}=\mathfrak{k}\oplus\mathfrak{p}$ be the Cartan decomposition 
of the Lie algebra of $G$. The Killing form of $\mathfrak{g}$ induces 
a $K$-invariant inner product $\langle\,.\,,\,.\,\rangle$ on $\mathfrak{p}$, 
hence a $G$-invariant Riemannian metric on $G/K$.
We denote by $d(\,.\,,\,.\,)$ the Riemannian distance on $\mathbb{X}$.

Fix a maximal abelian subspace $\mathfrak{a}$ in $\mathfrak{p}$. 
The rank of $\mathbb{X}$ is the dimension $\ell$ of $\mathfrak{a}$.
We identify $\mathfrak{a}$ with its dual $\mathfrak{a}^{*}$ 
by means of the inner product inherited from $\mathfrak{p}$. 	Sometimes we shall use coordinates on $\mathfrak{a}$; when we do, we shall always refer to the
coordinates associated to the orthonormal basis $\delta_1, ... , \delta_{\ell-1}, \,\rho/|\rho|$, where $ \delta_1, ... , \delta_{\ell-1},$ is any orthonormal basis of $\rho^{\perp}.$

Let $\Sigma\subset\mathfrak{a}$ be the root system of 
$(\mathfrak{g},\mathfrak{a})$ and denote by $W$ the Weyl group 
associated with $\Sigma$. 
Once a positive Weyl chamber $\mathfrak{a}^{+}\subset\mathfrak{a}$ 
has been selected, $\Sigma^{+}$ (resp. $\Sigma_{r}^{+}$ 
or $\Sigma_{s}^{+}$)  denotes the corresponding set of positive roots 
(resp. positive reduced, i.e., indivisible roots or simple roots).
Let $n$ be the dimension and $\nu$ be the pseudo-dimension 
(or dimension at infinity) of $\mathbb{X}$: 
\begin{align}\label{S2 Dimensions}
	\textstyle
	n\,=\,
	\ell+\sum_{\alpha \in \Sigma^{+}}\,m_{\alpha}
	\qquad\textnormal{and}\qquad
	\nu\,=\,\ell+2|\Sigma_{r}^{+}|
\end{align}
where $m_{\alpha}$ denotes the dimension of the positive root subspace
\begin{align*}
	\mathfrak{g}_{\alpha}\,
	=\,\lbrace{
		X\in\mathfrak{g}\,|\,[H,X]=\langle{\alpha,H}\rangle{X},\,
		\forall\,H\in\mathfrak{a}
	}\rbrace.
\end{align*}

Let $\mathfrak{n}$ be the nilpotent Lie subalgebra 
of $\mathfrak{g}$ associated with $\Sigma^{+}$ 
and let $N = \exp \mathfrak{n}$ be the corresponding 
Lie subgroup of $G$. We have the decompositions 
\begin{align*}
	\begin{cases}
		\,G\,=\,N\,(\exp\mathfrak{a})\,K 
		\qquad&\textnormal{(Iwasawa)}, \\[5pt]
		\,G\,=\,K\,(\exp\overline{\mathfrak{a}^{+}})\,K
		\qquad&\textnormal{(Cartan)}.
	\end{cases}
\end{align*}
Denote by $A(x)\in\mathfrak{a}$ and $x^{+}\in\overline{\mathfrak{a}^{+}}$
the middle components of $x\in{G}$ in these two decompositions, and by
$|x|=|x^{+}|$ the distance to the origin. For all $x,y\in{G}$, we have
\begin{align}\label{S2 Distance}
	|A(xK)|\,\le\,|x|
	\qquad\textnormal{and}\qquad
	|x^{+}-y^{+}|\,\le\,d(xK,yK),
\end{align}
see for instance \cite[Lemma 2.1.2]{AnJi1999}. For an element $x\in G$, another way to compare its middle components $A(x)$ in the Iwasawa decomposition and $x^{+}$ the Cartan decomposition, is the following consequence of Kostant's convexity lemma, \cite[Lemma 4.8]{APZ2023}: 
\begin{align}\label{S2 ineq Iwasawa Cartan}
	\langle{\rho,A(x)}\rangle\,
	\le\,\langle{\rho,x^{+}}\rangle.
\end{align}	

In the Cartan decomposition, the Haar measure 
on $G$ writes
\begin{align*}
	\int_{G}\diff{x}\,u_0(x)\,
	=\,
	|K/{\mathbb{M}}|\,\int_{K}\diff{k_1}\,
	\int_{\mathfrak{a}^{+}}\diff{x^{+}}\,\delta(x^{+})\, 
	\int_{K}\diff{k_2}\,u_0(k_{1}(\exp x^{+})k_{2})\,,
\end{align*}
with density
\begin{align}\label{S2 estimate of delta}
	\delta(x^{+})\,
	=\,\prod_{\alpha\in\Sigma^{+}}\,
	(\sinh\langle{\alpha,x^{+}}\rangle)^{m_{\alpha}}\,
	\asymp\,
	\prod_{\alpha\in\Sigma^{+}}
	\Big( 
	\frac{\langle\alpha,x^{+}\rangle}
	{1+\langle\alpha,x^{+}\rangle}
	\Big)^{m_{\alpha}}\,
	e^{2\langle\rho,x^{+}\rangle}
	\qquad\forall\,x^{+}\in\overline{\mathfrak{a}^{+}}. 
\end{align}
Here $K$ is equipped with its normalized Haar measure,
{$\mathbb{M}$} denotes the centralizer of $\exp\mathfrak{a}$ in $K$ and the volume 
of $K/{\mathbb{M}}$ can be computed explicitly, see \cite[Eq (2.2.4)]{AnJi1999}.
Recall that $\rho\in\mathfrak{a}^{+}$ denotes the half sum of all positive roots 
$\alpha \in \Sigma^{+}$ counted with their multiplicities $m_{\alpha}$:
\begin{align*}
	\rho\,=\,
	\frac{1}{2}\,\sum_{\alpha\in\Sigma^{+}} \,m_{\alpha}\,\alpha.
\end{align*}
We also define 
\[
\rho_{\min}: =\min_{x^{+}\in\mathfrak{a}^{+}, \; \|x^{+}\|=1}  \langle \rho, x^{+} \rangle \in(0, \|\rho\|].
\]
Finally, we say that a vector $H\in\mathfrak{a}$ lies on a wall if there exists a
root $\alpha\in\Sigma$ such that $\langle{\alpha,H}\rangle=0$. 
Otherwise, we say that $H$ stays away from the walls.

\subsection{Spherical Fourier analysis}
Let $\mathcal{S}(K \backslash{G}/K)$ be the Schwartz space of bi-$K$-invariant
functions on $G$. The spherical Fourier transform (Harish-Chandra transform)
$\mathcal{H}$ is defined by
\begin{align}\label{S2 HC transform}
	\mathcal{H}f(\lambda)\,
	=\,\int_{G}\diff{x}\,\varphi_{-\lambda}(x)\,u_0(x) 
	\qquad\forall\,\lambda\in\mathfrak{a},\
	\forall\,f\in\mathcal{S}(K\backslash{G/K}),
\end{align}
where $\varphi_{\lambda}\in\mathcal{C}^{\infty}(K\backslash{G/K})$ is the
spherical function of index $\lambda \in \mathfrak{a}$.
Denote by $\mathcal{S}(\mathfrak{a})^{W}$ the subspace 
of $W$-invariant functions in the Schwartz space $\mathcal{S}(\mathfrak{a})$. 
Then $\mathcal{H}$ is an isomorphism between $\mathcal{S}(K\backslash{G/K})$ 
and $\mathcal{S}(\mathfrak{a})^{W}$. The inverse spherical Fourier transform 
is given by
\begin{align}\label{S2 Inverse formula}
	u_0(x)\,
	=\,C_0\,\int_{\mathfrak{a}}\,\diff{\lambda}\,
	|\mathbf{c(\lambda)}|^{-2}\,
	\varphi_{\lambda}(x)\,
	\mathcal{H}f(\lambda) 
	\qquad\forall\,x\in{G},\
	\forall\,f\in\mathcal{S}(\mathfrak{a})^{W},
\end{align}
where the constant $C_0=2^{n-\ell}/(2\pi)^{\ell}|K/{\mathbb{M}}||W|$ depends only 
on the geometry of $\mathbb{X}$, and $|\mathbf{c(\lambda)}|^{-2}$ is the 
so-called Plancherel density. We next review some elementary facts about
the Plancherel density and the elementary spherical functions.

\subsubsection{Plancherel density}
According to the Gindikin-Karpelevič formula for the Harish-Chandra
$\mathbf{c}$-function, we can write the Plancherel density as
\begin{align*}
	|\mathbf{c}(\lambda)|^{-2}\,
	=\,\prod_{\alpha\in\Sigma_{r}^{+}}\,
	\Big|\mathbf{c}_{\alpha}
	\Big(
	\frac{\langle\alpha,\lambda\rangle}{\langle\alpha,\alpha\rangle}
	\Big)\Big|^{-2}
\end{align*}
with
\begin{align*}
	\mathbf{c}_{\alpha}(z)\,=\,
	\frac{\Gamma(\frac{\langle{\alpha,\rho}\rangle}
		{\langle{\alpha,\alpha}\rangle}
		+\frac{1}{2} m_{\alpha})}
	{\Gamma(\frac{\langle{\alpha,\rho}\rangle}
		{\langle{\alpha,\alpha}\rangle})}\,
	\frac{\Gamma(\frac{1}{2}
		\frac{\langle{\alpha,\rho}\rangle}
		{\langle{\alpha,\alpha}\rangle} 
		+\frac{1}{4} m _{\alpha} 
		+ \frac{1}{2} m_{2\alpha})}
	{\Gamma(\frac{1}{2}
		\frac{\langle{\alpha,\rho}\rangle}
		{\langle {\alpha,\alpha}\rangle} 
		+ \frac{1}{4} m_{\alpha})}\,
	\frac{\Gamma(iz)}
	{\Gamma(iz+ \frac{1}{2}m_{\alpha})}\,
	\frac{\Gamma(\frac{i}{2}z 
		+ \frac{1}{4} m_{\alpha})}
	{\Gamma(\frac{i}{2}z 
		+\frac{1}{4} m_{\alpha} 
		+ \frac{1}{2} m_{2\alpha})}.
\end{align*}
It holds
\begin{align*}
	\mathbf{b}(\lambda)\,
	=\,\bm{\pi}(i\lambda)\,\mathbf{c}(\lambda)\,
\end{align*}
where $\bm{\pi}(i\lambda)=\prod_{\alpha\in\Sigma_{r}^{+}}
\langle{\alpha,\lambda}\rangle$, and
$\mathbf{b}(-\lambda)^{\pm1}$ is a holomorphic function for 
$\lambda\in\mathfrak{a}+i\overline{\mathfrak{a}^{+}}$, which has the following
behavior:
\begin{align}\label{S2 bfunction}
	|\mathbf{b}(-\lambda)|^{\pm1}\,
	\asymp\,
	\prod_{\alpha\in\Sigma_{r}^{+}}\,
	(1+|\langle{\alpha,\lambda}\rangle|)^{
		\mp\frac{m_{\alpha}+m_{2\alpha}}{2}\pm1}.
\end{align}
Its derivatives can be estimated by
\begin{align}\label{S2 bfunction derivative}
	p(\tfrac{\partial}{\partial\lambda})
	\mathbf{b}(-\lambda)^{\pm1}\,
	=\,\mathrm{O}\big(|\mathbf{b}(-\lambda)|^{\pm1}\big),
\end{align}
where $p(\tfrac{\partial}{\partial\lambda})$ is any differential 
polynomial.

The following result will be needed later.
\begin{lemma}\label{lemma bfunc ratio}
	Let $x, y\in G$ such that $|x^{+}|=\textrm{O}(t)$ and $y$ is bounded. Then, as $t\rightarrow \infty$, it holds
	$$\frac{\textbf{b}\left(-i\frac{(y^{-1}x)^{+}}{2t}\right)^{-1}}{
		\textbf{b}\left(-i\frac{x^{+}}{2t}\right)^{-1}}=1 + \textnormal{O}\big(t^{-1}\big).$$
\end{lemma}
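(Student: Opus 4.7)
The plan is to apply a first-order Taylor expansion to $\lambda\mapsto \mathbf{b}(-i\lambda)^{-1}$ around $\lambda_{0}:=x^{+}/(2t)$ and evaluate it at $\lambda_{1}:=(y^{-1}x)^{+}/(2t)$, using the derivative estimate \eqref{S2 bfunction derivative} to control the increment.

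First, by the triangle inequality for the Riemannian distance and the identity $|g^{+}|=|g|$, the assumption $|x^{+}|=\mathrm{O}(t)$ together with the boundedness of $y$ give
\[
|(y^{-1}x)^{+}|\,\le\,|y^{-1}x|\,\le\,|y|+|x|\,=\,\mathrm{O}(t),
\]
so both $\lambda_{0},\lambda_{1}$ lie in a fixed compact subset of $\overline{\mathfrak{a}^{+}}$ for all sufficiently large $t$.

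The technical heart of the argument is the vectorial Cartan-projection estimate
\[
|(y^{-1}x)^{+}-x^{+}|\;\le\; C\,|y|,
\]
uniformly in $x\in G$, with $C$ depending only on $\mathbb{X}$. In rank one this is immediate from the reverse triangle inequality for the Riemannian distance applied to $d(y^{-1}xK,K)=d(xK,yK)$. In general rank, the direct bound $|(y^{-1}x)^{+}-x^{+}|\le d(y^{-1}xK,xK)=|x^{-1}yx|$ coming from \eqref{S2 Distance} is \emph{not} uniform in $x$, so one must invoke the stability of the Cartan projection under bounded left-translation: writing $y^{-1}=k_{1}\exp(H)k_{2}$ in Cartan form with $k_{1},k_{2}\in K$ and $|H|=|y|$, the Cartan projection of $y^{-1}x$ differs from $x^{+}$ by at most a vector of norm $\mathrm{O}(|H|)$, as may be deduced from Kostant's convexity theorem combined with the Iwasawa--Cartan comparison \eqref{S2 ineq Iwasawa Cartan} applied along the Iwasawa decomposition of $y^{-1}x$.

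Granted the bound $|\Delta|:=|\lambda_{1}-\lambda_{0}|=\mathrm{O}(1/t)$, I write
\[
\mathbf{b}(-i\lambda_{1})^{-1}-\mathbf{b}(-i\lambda_{0})^{-1}
\,=\,-i\int_{0}^{1}\bigl(\Delta\cdot(\nabla \mathbf{b}^{-1})(-i\lambda_{s})\bigr)\,\mathrm{d}s,\qquad \lambda_{s}:=\lambda_{0}+s\Delta.
\]
By \eqref{S2 bfunction derivative}, the integrand is $\mathrm{O}\bigl(|\Delta|\cdot |\mathbf{b}(-i\lambda_{s})|^{-1}\bigr)$, and since $\lambda_{s}$ varies inside a fixed compact set and within distance $\mathrm{O}(1/t)$ from $\lambda_{0}$, the product formula \eqref{S2 bfunction} yields $|\mathbf{b}(-i\lambda_{s})|^{-1}\asymp |\mathbf{b}(-i\lambda_{0})|^{-1}$ uniformly in $s\in[0,1]$. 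Dividing through by $\mathbf{b}(-i\lambda_{0})^{-1}$ gives the claimed $1+\mathrm{O}(1/t)$.

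The main obstacle is the uniform vectorial Cartan-projection bound; \eqref{S2 Distance} alone is insufficient because the displacement $|x^{-1}yx|$ grows with $x$, and one genuinely needs a Cartan-projection-specific stability argument. Everything else is a routine first-order expansion using only the derivative estimate \eqref{S2 bfunction derivative} and the bounded-domain comparison from \eqref{S2 bfunction}.
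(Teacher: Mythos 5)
Your overall approach --- bound the increment of $\lambda\mapsto\mathbf{b}(-i\lambda)^{-1}$ by the mean value theorem using \eqref{S2 bfunction derivative}, and note that $\mathbf{b}(-i\lambda)^{-1}\asymp 1$ on the bounded set swept out by $\lambda_{0},\lambda_{1}$ via \eqref{S2 bfunction} --- is exactly the paper's proof, so the analytic core is fine. The one place your write-up differs is the justification of the vectorial Cartan-projection bound $|(y^{-1}x)^{+}-x^{+}|\lesssim|y|$, and here you are both partly right and partly wrong. You are right that the naive application of \eqref{S2 Distance}, namely $|(y^{-1}x)^{+}-x^{+}|\le d(y^{-1}xK,xK)=|x^{-1}yx|$, is not uniform in $x$; the paper's terse citation glosses over this. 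But your proposed repair via Kostant's convexity theorem and the Iwasawa--Cartan comparison \eqref{S2 ineq Iwasawa Cartan} is not a proof: those tools only control the scalar $\rho$-component $\langle\rho,\cdot\rangle$, not the full Euclidean norm of the difference of Cartan projections, and you never actually derive the estimate from them.

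The clean fix uses \eqref{S2 Distance} itself, combined with the involution identity $(g^{-1})^{+}=-w_{0}\,g^{+}$, where $w_{0}\in W$ is the longest Weyl group element (an orthogonal transformation of $\mathfrak{a}$). Indeed,
\begin{align*}
|(y^{-1}x)^{+}-x^{+}|
&=\bigl|(-w_{0})\bigl((y^{-1}x)^{+}\bigr)-(-w_{0})\bigl(x^{+}\bigr)\bigr|
=\bigl|(x^{-1}y)^{+}-(x^{-1})^{+}\bigr|
\\&\le d\bigl(x^{-1}yK,\,x^{-1}K\bigr)
= d(yK,K)=|y|,
\end{align*}
where the inequality is \eqref{S2 Distance} applied to the pair $(x^{-1}y,x^{-1})$ and the last equality is the $G$-invariance of $d$. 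This is exactly the uniform bound $|\Delta|=\mathrm{O}(|y|/t)$ your Taylor argument needs, and it shows that \eqref{S2 Distance} genuinely is the right citation once one remembers that the Cartan projection is equivariant under the involution $g\mapsto g^{-1}$. With this replacement your proof is complete and coincides with the paper's.
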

\begin{proof}
	Observe first that owing to \eqref{S2 bfunction}, for all $x\in G$ such that $|x^{+}|=\textrm{O}(t)$, we have
	\begin{equation}\label{bfunc comp}
		\textbf{b}\left(-i\frac{x^{+}}{2t}\right)^{-1}\asymp 1.
	\end{equation} Since $y$ is bounded, on the one hand we have $|(y^{-1}x)^{+}|=d(xK, yK)=\textrm{O}(t)$ by the triangle inequality, and on the other hand by the mean value theorem we get
	\begin{align*} 
		\left|	\textbf{b}\left(-i\frac{x^{+}}{2t}\right)^{-1}-\textbf{b}\left(-i\frac{(y^{-1}x)^{+}}{2t}\right)^{-1} \right| \lesssim \left|\frac{x^{+}}{2t}-\frac{(y^{-1}x)^{+}}{2t}\right|\lesssim \frac{|y|}{t} \lesssim t^{-1}.
	\end{align*} 
	Here, we used the derivative bound \eqref{S2 bfunction derivative} combined with \eqref{bfunc comp}, and the fact that
	$$|(y^{-1}x)^{+}-x^{+}|\leq |y|,$$
	by \eqref{S2 Distance}. Dividing with 
	$\textbf{b}\left(-i\frac{x^{+}}{2t}\right)^{-1}$, we get the desired result.
\end{proof}

\subsubsection{Spherical functions}
For every $\lambda\in\mathfrak{a}_{\mathbb{C}}$, the spherical function
$\varphi_{\lambda}$ is a smooth bi-$K$-invariant eigenfunction of all 
$G$-invariant differential operators on $\mathbb{X}$, in particular of the
Laplace-Beltrami operator:
\begin{equation*}
	-\Delta\varphi_{\lambda}(x)\,
	=\,(\langle \lambda, \lambda \rangle+|\rho|^2)\,\varphi_{\lambda}(x).
\end{equation*}
It is symmetric in the sense that
$\varphi_{\lambda}(x^{-1})=\varphi_{-\lambda}(x)$, $W$-invariant in $\lambda \in \mathfrak{a}_{\mathbb{C}}$,
and it is given by the integral representation
\begin{align}\label{S2 Spherical Function}
	\varphi_{\lambda}(x)\, 
	=\,\int_{K}\diff{k}\,e^{\langle{i\lambda+\rho,\,A(kx)}\rangle}.
\end{align} 
Furthermore, it holds
\begin{align}\label{S2 spherical split}
	\varphi_{\lambda}(y^{-1}x)\, 
	=\,\int_{K}\diff{k}\,e^{\langle{i\lambda+\rho,\,A(kx)}\rangle}\,e^{\langle{-i\lambda+\rho,\,A(ky)}\rangle},
\end{align}
see for instance \cite[Ch. III, $\S 1$, Theorem 1.1]{Hel1994}. 
Recall that all elementary spherical functions $\varphi_{\lambda}$ 
with parameter $\lambda\in\mathfrak{a}$ are controlled by the ground spherical
function $\varphi_{0}$, which satisfies the global estimate
\begin{align}\label{S2 global estimate phi0}
	\varphi_{0}(\exp{H})\,
	\asymp\,
	\Big\lbrace \prod_{\alpha\in\Sigma_{r}^{+}} 
	1+\langle\alpha,H\rangle\Big\rbrace\,
	e^{-\langle\rho, H\rangle}
	\qquad\forall\,H\in\overline{\mathfrak{a}^{+}}.
\end{align}
Moreover, the ground spherical function satisfies 
\begin{align}\label{S2 phi0 far}
	\varphi_{0}(\exp{H})\,
	\sim\,C_{2}\,\bm{\pi}(H)\,e^{-\langle{\rho,H}\rangle}
\end{align}
as $\mu(H)\rightarrow\infty$, where $C_{2}=\bm{\pi}(\rho_{0})^{-1}\mathbf{b}(0)$,
see for instance \cite[Proposition 2.2.12.(ii)]{AnJi1999}. Finally, for all $q>0$ we have
$$0<\varphi_{iq\rho}(\exp H)=\varphi_{-iq\rho}(\exp H)\leq e^{q\langle \rho, H\rangle}\varphi_{0}(\exp H),$$
see \cite[Proposition 4.6.1]{GaVa1988}. Then it becomes clear that $\varphi_{\pm i\rho}\equiv 1$ and that $f\in L^1(\mathbb{X})$ implies  $\int_{\mathbb{X}}\textrm{d}x \, |f(x)|\, \varphi_{\pm iq\rho}(x)<+\infty$ for all $q\in(0,1)$. 
For further properties of spherical functions 
$\varphi_{\lambda}$ we refer to \cite[Chap.4]{GaVa1988} and 
\cite[Chap.IV]{Hel2000}.

\subsection{Heat kernel on symmetric spaces}
The heat kernel on $\mathbb{X}$ is a positive bi-$K$-invariant right 
convolution kernel, i.e., $h_{t}(xK,yK)=h_{t}(y^{-1}x)>0$, 
which is thus determined by its restriction 
to the positive Weyl chamber. 
According to the inversion formula of the spherical Fourier transform, 
the heat kernel is given by
\begin{align}\label{S2 heat kernel inv}
	h_{t}(x)\,
	=\,C_{0}\,\int_{\mathfrak{a}}\,\diff{\lambda}\,
	|\mathbf{c(\lambda)}|^{-2}\,
	\varphi_{\lambda}(x)\,
	e^{-t(|\lambda|^{2}+|\rho|^{2})}
\end{align}
and satisfies the global estimate
\begin{align}\label{S2 heat kernel}
	h_{t}(x)\,
	\asymp\,t^{-\frac{n}{2}}\,
	\Big\lbrace{
		\prod_{\alpha\in\Sigma_{r}^{+}}
		(1+t+\langle{\alpha,x^{+}}\rangle)^{\frac{m_{\alpha}+m_{2\alpha}}{2}-1}
	}\Big\rbrace\,\varphi_{0}(x)
	e^{-|\rho|^{2}t-\frac{|x|^{2}}{4t}}
\end{align}
for all $t>0$ and $x^{+}\in\overline{\mathfrak{a}^{+}}$, 
see \cite{AnJi1999,AnOs2003}.

Let $x\in G$. If 
$\mu(x^{+})=\min_{\alpha\in\Sigma^{+}}\langle{\alpha,x^{+}}\rangle\rightarrow\infty$, so in this sense $x^{+}$ stays away from walls, or if $|x|=\textrm{O}(t)$, then the large time
behavior of the heat kernel $h_t(x)$ can be described more accurately by the following
asymptotics \cite[Theorem 5.1.1]{AnJi1999}:
\begin{align}\label{S2 heat kernel critical region}
	h_t(\exp x^{+})\,
	\sim\,C_{1}\,t^{-\frac{\nu}{2}}\,
	\mathbf{b}\big(-i\tfrac{x^{+}}{2t}\big)^{-1}\,
	\varphi_{0}(\exp{x^{+}})\,e^{-|\rho|^{2}t-\frac{|x^{+}|^{2}}{4t}}
\end{align}
as $t\rightarrow\infty$.
Here $C_{1}=C_{0}2^{-|\Sigma_{r}^{+}|}|W|\pi^{\frac{\ell}{2}}
\bm{\pi}(\rho_{0})\mathbf{b}(0)^{-1}$ is a positive constant, and $\rho_{0}$
denotes the half sum of all positive reduced roots.

\section{Critical regions and mass functions}\label{Section.3 lp critical}

In this section we discuss the notion $L^p$ concentration of the heat kernel, which will be a crucial concept for our results.

To begin with, it is well-known that the behavior of the $L^p$ norms $\|h_t\|_p$ of the heat kernel $h_t$ can be deduced from the pointwise estimates in \eqref{S2 heat kernel}. More precisely, for $t$ large, it was proved in \cite[Proposition 4.1]{AnJi1999} that
$$
\left\|h_t\right\|_p \asymp 
\begin{cases}t^{-\frac{\ell}{2 p^{\prime}}} e^{-\frac{4}{p p^{\prime}}|\rho|^2 t} & \text { if } 1 \leq p<2 \\ t^{-\frac{\ell}{4}-\frac{| \Sigma_r^{+} |}{2}} e^{-|\rho|^2 t} & \text { if } p=2, \\ t^{-\frac{\ell}{2}-|\Sigma_r^{+}|} e^{-|\rho|^2 t} & \text { if } 2<p \leq+\infty .
\end{cases}
$$

Davies \cite[Corollary 5.7.3]{Dav1989}, observed that heat diffusion on real hyperbolic spaces propagates asymptotically with finite speed. This striking phenomenon was later extended to non-compact symmetric spaces, by
analytic means in \cite{AnSe1992} and probabilistic ones in \cite{B94}. The general $L^p$ setting was considered in \cite{AnJi1999}. More precisely, the following lemma shows where the heat kernel
$h_{t}$ concentrates for any $1\leq p \leq \infty$. We denote by $(\widehat{H, \rho})$ the angle between $H\in \mathfrak{a}$ and $\rho$.

\begin{theorem}\cite[Theorem 4.1.2.]{AnJi1999} \label{thm: lp critical}
	(i) Case $1 \leq p<2$ : For $t$ large, consider the box
	$$
	B_p(t)=\left\{H \in \overline{\mathfrak{a}^{+}}: \quad 2| \rho_p|t-r(t) \leq| H|\leq 2| \rho_p | t+r(t),\quad  (\widehat{H, \rho})\leq \theta(t)\right\}
	$$
	centered at the point $2 t \rho_p := 4\left(\frac{1}{p}-\frac{1}{2}\right)t\rho$ in $\mathfrak{a}$, where $r(t)$ and $\theta(t)$ are positive with
	$$
	\left\{\begin{array} { l } 
		{ \frac { r ( t ) } { t } \rightarrow 0 } \\
		{ \frac { r ( t ) } { \sqrt { t } } \rightarrow + \infty }
	\end{array} \quad \text { and } \quad \left\{\begin{array}{l}
		\theta(t) \rightarrow 0 \\
		\sqrt{t} \theta(t) \rightarrow+\infty
	\end{array} \quad \text { as } t \rightarrow+\infty .\right.\right.
	$$
	Then the $L^p$ norm of $h_t$ concentrates asymptotically in the bi-K-orbit of $\exp B_p(t)$ :
	$$
	\left\|h_t\right\|_p^{-1}\left\{\int_{G \backslash K\left\{\exp B_p(t)\right\} K} d x\left|h_t(x)\right|^p\right\}^{1 / p} \longrightarrow 0 \quad \text { as } t \rightarrow+\infty .
	$$
	(ii) Case $p=2$ : We have a similar concentration
	$$
	\left\|h_t\right\|_2^{-1}\left\{\int_{G \backslash K\left\{\exp B_2(t)\right\} K} d x\left|h_t(x)\right|^2\right\}^{1 / 2} \longrightarrow 0
	$$
	with respect to the box
	$$
	B_2(t)=\left\{H \in \overline{\mathfrak{a}^{+}}:\quad r_1(t) \leq|H|  \leq r_2(t), \quad \omega(H) \geq r_3(t)\right\},
	$$
	where
	$$
	\left\{\begin{array} { l } 
		{  r_1 ( t )  \rightarrow +\infty} \\
		{ \frac { r_1 ( t ) } { \sqrt { t } } \rightarrow 0 }
	\end{array}, \quad \frac{r_2(t)}{\sqrt{t}}\rightarrow  +\infty \quad \text { and } \quad \left\{\begin{array}{l}
		r_3(t) \rightarrow +\infty \\
		\frac{r_3(t)}{\sqrt{t}}  \rightarrow 0
	\end{array} \quad \text { as } t \rightarrow+\infty .\right.\right.
	$$
	\\
	(iii) Case $2<p<+\infty$:
	$$\left\|h_t\right\|_p^{-1}\left\{\int_{|x| \geq R(t)} d x\left|h_t(x)\right|^p\right\}^{1 / p} \longrightarrow 0
	$$
	if $\frac{R(t)}{\log t} \rightarrow+\infty$, $\frac{R(t)}{\sqrt{t}} \rightarrow 0$ as $t \rightarrow+\infty$.
	\\
	(iv) Case $p=+\infty$ : $h_t(x)$ reaches its maximum at $x=e$ and
	$$\left\|h_t\right\|_{\infty}^{-1}\sup_{|x|\geq R(t)}h_t(x) \longrightarrow 0
	$$
	if $\frac{R(t)}{\log t} \rightarrow+\infty$, $\frac{R(t)}{\sqrt{t}} \rightarrow 0$ as $t \rightarrow+\infty$.
\end{theorem}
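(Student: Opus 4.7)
\smallskip

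\textbf{Strategy.} The plan is to prove the concentration statement by a direct computation: insert the pointwise heat kernel bound \eqref{S2 heat kernel} into the Cartan integral
$$\|h_t\|_p^p = |K/\mathbb{M}|\int_{\mathfrak{a}^{+}} \delta(H)\, h_t(\exp H)^p \, \diff H,$$
combine with the density estimate \eqref{S2 estimate of delta} and the ground spherical function bound \eqref{S2 global estimate phi0}, and identify the integrand as a Gaussian in $H$ (times a polynomial prefactor). The critical box $B_p(t)$ will be exactly the effective support of this Gaussian. Writing out the exponents one finds
$$\delta(H)\, h_t(\exp H)^p \;\asymp\; t^{-pn/2}\, P_p(t,H)\, \exp\!\bigl( (2-p)\langle \rho,H\rangle - \tfrac{p|H|^2}{4t} - p|\rho|^2 t\bigr),$$
with $P_p(t,H)$ a polynomial-type factor gathering the contributions of $\delta$, $\varphi_0^p$, and the $\prod_{\alpha\in\Sigma_r^{+}}(1+t+\langle\alpha,H\rangle)^{(m_\alpha+m_{2\alpha})/2-1}$ appearing in \eqref{S2 heat kernel}.

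\textbf{Case $1\le p<2$.} Completing the square in $H$ yields that the exponent equals $-4|\rho|^2 t/p' - p|H-2t\rho_p|^2/(4t)$, with $\rho_p=(2/p-1)\rho$; the Gaussian is thus centered at $2t\rho_p\in\mathfrak{a}^{+}$ with standard deviation of order $\sqrt{t}$, and its maximal value matches $\|h_t\|_p^p\asymp e^{-4|\rho|^2 t/p'}t^{-\ell p/(2p')}$ up to polynomial factors. The next step is to estimate the tail integral on $\mathfrak{a}^{+}\setminus B_p(t)$. Splitting $H=s\rho/|\rho|+H_\perp$, the radial tail $\bigl||H|-2t|\rho_p|\bigr|>r(t)$ contributes a factor $e^{-cr(t)^2/t}\!\to\!0$ since $r(t)/\sqrt{t}\to\infty$, while the angular tail $(\widehat{H,\rho})>\theta(t)$ forces $|H_\perp|\gtrsim t|\rho_p|\theta(t)$ and contributes $e^{-ct\theta(t)^2}\!\to\!0$ since $\sqrt{t}\theta(t)\to\infty$. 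Absorbing the polynomial prefactors (which are themselves dominated by $e^{o(r(t)^2/t)}$) then shows that $\int_{\mathfrak{a}^{+}\setminus B_p(t)}\!\delta\,h_t^p=o(\|h_t\|_p^p)$, which is (i) after returning to $G$ via the Cartan decomposition and using bi-$K$-invariance.

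\textbf{Case $p=2$.} The linear term $(2-p)\langle\rho,H\rangle$ vanishes and the Gaussian recenters at the origin with width $\sqrt{t}$. Now, however, the prefactor $P_2(t,H)$ has a genuine polynomial dependence on the wall coordinates $\langle\alpha,H\rangle$ (the contribution from $\varphi_0^2$), which vanishes on walls and grows off of them. Hence the $L^2$ mass is lost not only far from the origin ($|H|\gtrsim r_2(t)$ with $r_2(t)/\sqrt{t}\to\infty$ captures the Gaussian tail) and too close to it ($|H|\le r_1(t)\to\infty$ to accommodate the polynomial zero at $0$), but also near the walls: if $\omega(H)\le r_3(t)$ with $r_3(t)/\sqrt{t}\to 0$, then some factor $\langle\alpha,H\rangle$ in $P_2$ is $\le r_3(t)$, which, after a direct comparison with $\|h_t\|_2^2\asymp t^{-\ell/2-|\Sigma_r^{+}|}e^{-2|\rho|^2 t}$, yields a contribution of order $r_3(t)/\sqrt{t}=o(1)$. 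Combining the three tails gives~(ii).

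\textbf{Cases $2<p<+\infty$ and $p=+\infty$.} Now $(2-p)\langle\rho,H\rangle\le 0$, so the exponent is uniformly bounded by $-p|\rho|^2 t$ and the Gaussian forces concentration at the origin: the only obstruction to full concentration in a ball of radius $\sqrt{t}$ comes from the polynomial prefactor, whose powers in $t$ and $|H|$ need $|H|\gg\log t$ to be negligible. A routine tail estimate of $\int_{|H|\ge R(t)}\delta\,h_t^p$ under the hypotheses $R(t)/\log t\to\infty$, $R(t)/\sqrt{t}\to 0$ then gives (iii); the case $p=\infty$ is immediate from \eqref{S2 heat kernel} since the polynomial in $H$ times $e^{-|H|^2/(4t)}$ is maximized at $H=0$ up to lower-order corrections. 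The principal technical obstacle throughout is the careful tracking of the polynomial prefactors $P_p(t,H)$, especially at $p=2$, where the wall behavior is essential and must be matched with the known asymptotics $\|h_t\|_2^2\asymp t^{-\ell/2-|\Sigma_r^{+}|}e^{-2|\rho|^2 t}$ in order to identify the correct admissible growth rates of $r_1,r_2,r_3$.
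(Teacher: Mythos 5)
The paper itself does not prove this statement; it is quoted verbatim from \cite{AnJi1999} (Theorem 4.1.2), so there is no internal proof to compare with. Your strategy — insert the global bound \eqref{S2 heat kernel} together with \eqref{S2 estimate of delta} and \eqref{S2 global estimate phi0} into the Cartan integral and treat $\delta(H)h_t(\exp H)^p$ as a polynomially weighted Gaussian — is exactly the mechanism behind the cited proof, and your bookkeeping of the exponent is correct: completing the square gives the center $2t\rho_p$, the constant $-4|\rho|^2t/p'$, width $\sqrt t$, and the stated hypotheses on $r,\theta,r_1,r_2,r_3,R$ are the right ones for the three kinds of tails (radial, angular/wall, and near the origin), with $R(t)/\log t\to\infty$ entering precisely to beat residual powers of $t$ when $p=\infty$.

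There is, however, one step that fails as written, in case (i) (and implicitly in the analogous estimates for (ii)): the claim that the polynomial prefactors are \emph{"dominated by $e^{o(r(t)^2/t)}$"}. The prefactor $P_p(t,H)$ contains powers of $t$, i.e.\ factors $e^{C\log t}$, while the hypotheses only guarantee $r(t)^2/t\to\infty$, possibly much slower than $\log t$ (e.g.\ $r(t)=\sqrt{t\log\log t}$ is admissible). With your bound the tail-to-norm ratio is only $O\bigl(t^{B}e^{-c\,r(t)^2/t}\bigr)$, which need not tend to $0$. The repair is an exact cancellation rather than an absorption: on $\{H\in\overline{\mathfrak{a}^{+}}:|H|\lesssim t\}$ one has $1+t+\langle\alpha,H\rangle\asymp t$, $1+\langle\alpha,H\rangle\lesssim t$ and $\tfrac{\langle\alpha,H\rangle}{1+\langle\alpha,H\rangle}\le 1$, hence $P_p(t,H)$ is bounded by a constant times its value near the center, so that
\begin{equation*}
\delta(H)\,h_t(\exp H)^p\;\lesssim\; t^{-p\ell/2}\,e^{-\frac{4}{p'}|\rho|^2 t}\,e^{-\frac{p}{4t}|H-2t\rho_p|^2}\qquad (|H|\lesssim t),
\end{equation*}
and comparing the Gaussian tail integral with $\|h_t\|_p^p\asymp t^{-p\ell/2+\ell/2}e^{-\frac{4}{p'}|\rho|^2t}$ the powers of $t$ cancel exactly, leaving a ratio $\lesssim e^{-c\,r(t)^2/t}+e^{-c\,t\,\theta(t)^2}\to0$ (the region $|H|\gtrsim t$ is separately crushed by $e^{-ct}$; also note the angular tail only forces $|H_\perp|\gtrsim t\theta(t)$ after the radial tail has been removed, so treat the complement as radial tail union angular-tail-within-the-shell). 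The same "no stray $t^B$" bookkeeping is what makes the three tails at $p=2$ work, where your wall estimate of order $(r_3(t)/\sqrt t)^{\text{positive power}}$ is correct in spirit but needs to be carried out against the exact value $\|h_t\|_2^2\asymp t^{-\ell/2-|\Sigma_r^{+}|}e^{-2|\rho|^2t}$.
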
\label{thm: critical}
Notice that $p= 2$ is the only case which resembles the Euclidean setting.
We will be referring to the above domains of heat concentration as \textit{$L^p$ critical regions}. It is also worth mentioning that rates of convergence can be explicitly computed, and they will depend on the quantities in $t$ used to define the regions, by using the heat kernel estimates \eqref{S2 heat kernel}. For instance, in the $L^1(\mathbb{X})$ case, rates can be found in \cite{AnSe1992} or \cite[Lemma 2.1]{APZ2023},  but for the sake of brevity we will avoid doing so.

We now consider the following mass functions of initial data, whenever the integrals make sense: set
$$M_p(u_0)(g):=\int_G \diff{y}\,u_{0}(yK)\,e^{\frac{2}{p} \langle \rho , A(k(g)^{-1}y)\rangle}, \qquad 1\leq p<2$$ 
where $g=k(g)(\exp g^{+})k'(g)$ in the Cartan decomposition, while we set
$$M_p(u_0)(g):=\frac{(u_{0}*\varphi_{0})(g)}{\varphi_{0}(g)}, \qquad 2\leq p\leq \infty.$$
Clearly, these functions are right-$K$-invariant, so they descend to functions on $\mathbb{X}$.

\begin{remark}\label{remark: mass bdd weight}
	Consider the weights $w_p$ on $\mathbb{X}$ given by
	$${w}_p(gK)=e^{\frac{2}{p}\langle \rho, g^{+} \rangle},\quad k\in K, \quad  \text{where} \quad g=k(g)(\exp g^{+}) k'(g), \quad 1\leq p <2,$$
	and 
	$${w}_p(gK)=e^{\langle \rho, g^{+} \rangle}, \quad 2\leq p\leq \infty,$$ 
	and recall that by \eqref{S2 ineq Iwasawa Cartan} that
	$$\langle \rho, A(kg) \rangle \leq \langle \rho, (kg)^{+} \rangle=\langle \rho, g^{+} \rangle \quad \text{for all} \quad  k\in K.$$

	Assume first that $1\leq p<2$. Then it becomes clear that for initial data $u_0$ such that $$\int_{G}\textrm{d}g \, |u_0(gK)|\, e^{\frac{2}{p}\langle \rho, g^{+} \rangle}<+\infty,$$ the mass function $M_p(u_0)(\cdot)$ for $1\leq p <2$ is  bounded. 
	
	Let us now discuss the case $2\leq p\leq \infty$ and the case of initial data $u_0$ such that $$\int_{G}\textrm{d}g \, |u_0(gK)|\, e^{\langle\rho,g^{+} \rangle }<+\infty.$$
	Owing to \eqref{S2 spherical split}, we have the following local Harnack inequality:
	\begin{align}\label{Harnack}
		\varphi_{0}(y^{-1}g)\,
		=\,
		\int_{K}\diff{k}\,e^{\langle\rho,A(kg)\rangle}\,
		e^{\langle{\rho,A(ky)}\rangle}\,\leq e^{\langle \rho,y^{+} \rangle}\,\int_{K}\diff{k}\,e^{\langle{\rho,A(kg)\rangle}}\,\leq \, e^{\langle\rho,y^{+} \rangle}\, \varphi_{0}(g),
	\end{align}
	which holds for every 
	$y,g\in{G}$. Therefore, 
	\begin{align*}
		\left|\frac{(u_{0}*\varphi_{0})(g)}{\varphi_{0}(g)}\right|\leq \varphi_{0}(g)^{-1}\int_{G}\textrm{d}y\, |u_0(y)|\, \varphi_{0}(y^{-1}g)\leq \int_{G}\textrm{d}y\, |u_0(y)|\, e^{\langle\rho,y^{+} \rangle}.
	\end{align*}
	This establishes that the mass function $M_p(u_0)(\cdot)$, $2 \leq p \leq \infty$, is bounded. 
	
	Clearly, the mass functions $M_p(u_0)(\cdot )$ are continuous and bounded for all $p\in [1, \infty]$ if  ${u}_{0}\in\mathcal{C}_{c}(\mathbb{X})$, since $\langle\rho,y^{+} \rangle \leq |\rho||y|$.
\end{remark}

\begin{remark}
	It is remarkable the different behavior of mass functions depending on whether $p$ is above or below the critical point $p=2$.  This contrast is a consequence of the different $L^p$ heat concentration. However, it is worth mentioning that for $p=2$ a mass function can be defined both ways and we can get for both the $L^2$ result of \cref{S1 Main thm 2}. We refer for this to \cref{prop: L2 variant}, which emphasizes that the case $p=2$ is indeed critical. Let us elaborate.
	
	Let $u_{0}\in \mathcal{C}_c({\mathbb{X}})$. 
	Denote by $\mathbb{M}$ the the centralizer of $\exp{\mathfrak{a}}$ in $K$. 
	Recall that the Helgason-Fourier transform 
	\begin{align}\label{S3 Helgason}
		\mathcal{H}u_{0}(\lambda,k\mathbb{M})\,
		=\,
		\int_{G}\diff{g}\,
		u_{0}(gK)\,e^{\langle{-i\lambda+\rho,\,A(k^{-1}g)}\rangle}, 
	\end{align} 
	boils down to the transform \eqref{S2 HC transform}
	when $u_{0}$ is bi-$K$-invariant. 
	It follows that 
	$$M_p(u_0)(g)=\mathcal{H}u_0\left( i\left(\frac{2}{p}-1\right)\rho,k(g)\mathbb{M}\right), \quad 1\leq p \leq 2.$$
	Consider now on the one hand the spherical principal series, which are realized on the boundary $K/\mathbb{M}$ by
	\begin{equation*}
		\bigl[\pi_\lambda(y)\,\xi\bigr](kM)=
		\underbrace{e^{-\langle\rho+i\lambda,H(y^{-1}k)\rangle}}_{e^{\langle\rho+i\lambda,A(k^{-1}y)\rangle}}
		\xi(\kappa(y^{-1}k)\mathbb{M})\,,
	\end{equation*}
	where $g=\kappa(g)\,(\exp H(g))\,n(g)$ in the Iwasawa decomposition $G=K(\exp\mathfrak a)N$.
	Then $M_p(u_0)$ is equal to
	\begin{equation}\label{LpMass2}
		\bigl[\pi_{-i(2/p-1)\rho}(u_0) \, 1\bigr](k(g)\mathbb{M})
		=\int_Gdy\,u_0(y)\,\bigl[\pi_{-i(2/p-1)\rho}(y)\, 1\bigr](k(g)\mathbb{M})
	\end{equation}
	or to the Helgason-Fourier transform, as already mentioned, 
	evaluated at $\lambda=i(2/p-1)\rho$ and $k(g)\mathbb{M}$. On the other hand, notice that
	\begin{equation*}
		\varphi_{\pm i(2/p-1)\rho}(g)=\langle\pi_{\mp i(2/p-1)\rho}(x)\,1,1\rangle\,,
	\end{equation*}
	hence
	\begin{equation*}
		\tfrac{u_0\,*\,\varphi_{\pm i(2/p-1)\rho}(g)}{\varphi_{\pm i(2/p-1)\rho}(g)}
		=\tfrac{\langle\pi_{\mp i(2/p-1)\rho}(g)\,1,\pi_{\pm i(2/p-1)\rho}(u_0)1\rangle}{\langle\pi_{\mp i(2/p-1)\rho}(g)\,1,1\rangle}
	\end{equation*}
	is quite different from \eqref{LpMass2}, so the expression of the mass function for $1\leq p<2$ does not resemble its corresponding expression for $p>2$.
\end{remark}

\begin{remark}\label{remark bi-K}
	If $u_{0}$ is in $\mathcal{L}^{p}(G//K)$, then we notice that for every $p$ the mass function boils down to a constant. Indeed, dealing with $p\in [1,2]$ first, 
	observe that for any $k\in K$ we have
	\begin{align*}
		\int_G \diff{y}\,u_{0}(yK)\,e^{\frac{2}{p} \langle \rho , A(ky)\rangle}=\mathcal{H}{u}_0\left( \pm i (\frac{2}{p}-1)\rho \right)=\int_{G}\diff{y}\,\varphi_{(\frac{2}{p}-1)i\rho}u_{0}(yK).
	\end{align*}
	Notice that when $p=1$, we simply obtain  $\int_G \diff{y}\,u_{0}(yK)$, since $\varphi_{\pm i\rho}\equiv 1$. Thus we obtain the mass usually considered in the Euclidean case or in \cite{VazHyp,APZ2023}. Hence the mass function boils down to a constant, namely to
	\begin{align*}
		{M}_p\,=\,\mathcal{H}u_{0}(\pm(2/p-1)i\rho) \quad \text{if} \quad f\in \mathcal{L}^p(G//K), \quad p\in [1, 2].
	\end{align*}
	Similarly, when $p\in [2, \infty]$,
	\begin{align}\label{S4 radial mass}
		\frac{(u_{0}*\varphi_{0})(g)}{\varphi_{0}(g)}\,
		&=\,\tfrac{1}{\varphi_{0}(g)}\,
		\int_{G}\diff{y}\,u_{0}(y)\varphi_{0}(y^{-1}g)\notag\\[5pt]
		&=\,\tfrac{1}{\varphi_{0}(g)}\,
		\int_{G}\diff{y}\,u_{0}(y)\,
		\underbrace{\vphantom{\Big|}
			\int_{K}\diff{k}\,\varphi_{0}(y^{-1}kg)
		}_{\varphi_{0}(g)\,\varphi_{0}(y)}\,
		=\,\int_{G}\diff{y}\,u_{0}(y)\,\varphi_{0}(y).
	\end{align}
	Hence the mass function boils down to a constant, namely to
	\begin{align*}
		{M}_p\,=\,\mathcal{H}u_{0}(0), \quad \text{if} \quad f\in \mathcal{L}^p(G//K), \quad p\in [2, \infty].
	\end{align*}
	In this sense, our notion of mass generalizes all constant masses considered before in \cite{VazHyp, APZ2023, NRS24}.
\end{remark}

\section{$L^p$ convergence, $1\leq p<2$} \label{Section.4 lp 1 to 2}

Our aim in this section is to prove the $L^p$ asymptotics \eqref{S1 Main thm 2 convergence} for $p\in [1,2)$. To this end, we examine separately the $L^p(\mathbb{X})$ critical region, as defined in \cref{thm: lp critical}, for which we rely on heat kernel asymptotics. Outside this critical region, we treat in a different way the $L^p$ concentration of the solution, the reason being that everything boils down to the small $L^p$ mass of the heat kernel itself there.

In this section we will assume that with the notation of \cref{thm: lp critical}, the angle $\theta(t)$ in the description of the $L^p(\mathbb{X})$ critical region for $1\leq p<2$, is equal to $\frac{r(t)}{t}$.

\begin{proposition}\label{S3 proposition kernel quotient}
	Let	$1\leq p<2$	and $y\in{G}\smallsetminus{K}$, $|y|<\xi$. 
	Then, for every $g$ in the critical region $K(\exp B_p(t))K$  we have
	\begin{align*}
		\frac{h_{t}(gK,yK)}{h_{t}(gK,eK)}\,
		=\,e^{\frac{2}{p}\langle{\rho,\,A(k(g)^{-1}y)}\rangle}
		+\textnormal{O}\big(\tfrac{r(t)}{t}\big)
		\qquad\textnormal{as}\quad\,t\rightarrow\infty,
	\end{align*}
	where $g=k(g)(\exp{g^{+}})k'(g)$ in the Cartan decomposition.
\end{proposition}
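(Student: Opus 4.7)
The plan is to substitute the precise heat kernel asymptotics \eqref{S2 heat kernel critical region} into both $h_t(y^{-1}g)$ and $h_t(g)$, and track the three resulting ratios (one each for the $\mathbf{b}$-function, the ground spherical function $\varphi_0$, and the Gaussian factor) under the shift $g \mapsto y^{-1}g$. Since $g \in K(\exp B_p(t))K$ forces $g^+$ to lie within $\textnormal{O}(r(t))$ of $2t\rho_p = 2t(2/p-1)\rho$, which is in the interior of the positive Weyl chamber (as $p<2$), both $g^+$ and $(y^{-1}g)^+$ have size $\asymp t$ and stay far away from the walls, so \eqref{S2 heat kernel critical region} applies to each. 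Recall also that $|(y^{-1}g)^+-g^+|\le|y|<\xi$ by \eqref{S2 Distance}.

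The cornerstone of the proof is the refined expansion
\begin{align*}
(y^{-1}g)^+ \,=\, g^+ - A(k(g)^{-1}y) + \textnormal{O}(e^{-ct}),
\end{align*}
valid for some $c>0$. To derive it, write $g = k(g)(\exp g^+)k'(g)$ and decompose $k(g)^{-1}y$ via Iwasawa; commuting the resulting nilpotent factor past $\exp(g^+)$ invokes the contraction $\Ad(\exp(-g^+))\mathfrak{g}_\alpha \subset e^{-\alpha(g^+)}\mathfrak{g}_\alpha$ with $\alpha(g^+)\gtrsim t$ for every positive root $\alpha$ (a consequence of the box description). The residual nilpotent contribution is $\textnormal{O}(e^{-ct})$, so only the $\mathfrak{a}$-shift $-A(k(g)^{-1}y)$ survives at leading order in the Cartan projection.

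The three ratios then reduce as follows. The $\mathbf{b}$-function quotient is $1+\textnormal{O}(t^{-1})$ directly by \cref{lemma bfunc ratio}. For the $\varphi_0$ quotient, the sharp asymptotic \eqref{S2 phi0 far} applies since $\mu(g^+)\gtrsim t$: the polynomial factor $\bm{\pi}$ contributes $1+\textnormal{O}(t^{-1})$ and the exponential contributes $e^{\langle\rho,A(k(g)^{-1}y)\rangle}$, up to a negligible $\textnormal{O}(e^{-ct})$ error. For the Gaussian ratio the expansion yields
\begin{align*}
|(y^{-1}g)^+|^2 - |g^+|^2 \,=\, -2\langle g^+,A(k(g)^{-1}y)\rangle + \textnormal{O}(1),
\end{align*}
and using the box description once more, $g^+/(2t) = (2/p-1)\rho + \textnormal{O}(r(t)/t)$, so the Gaussian contribution becomes $e^{(2/p-1)\langle\rho,A(k(g)^{-1}y)\rangle}\bigl(1+\textnormal{O}(r(t)/t)\bigr)$.

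Multiplying the three factors and using $1+(2/p-1) = 2/p$ produces $e^{(2/p)\langle\rho,A(k(g)^{-1}y)\rangle}\bigl(1+\textnormal{O}(r(t)/t)\bigr)$; since $|y|<\xi$ bounds $A(k(g)^{-1}y)$ uniformly in $g$, the leading exponential is bounded uniformly and the multiplicative error is absorbed into the claimed additive error $\textnormal{O}(r(t)/t)$. The main obstacle is the Iwasawa/Cartan bookkeeping behind the refined expansion of $(y^{-1}g)^+$, together with the Lipschitz-type estimate on the Cartan projection needed to propagate the $\textnormal{O}(e^{-ct})$ nilpotent perturbation into the $\mathfrak{a}^+$-component; once this is secured, the remaining computation is a direct substitution of the expansion and the box description into the three ratios.
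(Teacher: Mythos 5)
Your proof is correct, but it takes a genuinely different route from the paper's. The paper decomposes the ratio into the same four factors ($\mathbf{b}$, $\bm{\pi}$, $\varphi_0$-exponential, Gaussian) but controls the shift $g\mapsto y^{-1}g$ purely metrically: it uses only the crude bound $|(y^{-1}g)^+-g^+|\le|y|$ from \eqref{S2 Distance} together with the angular Lemmas~\ref{S3 lemma distance behaviors} and~\ref{inner prod translation}, which describe how $(y^{-1}g)^+/|(y^{-1}g)^+|$ stays close to $\rho/|\rho|$ and how $\langle\rho,\cdot\rangle$ differs from $|\rho|\,|\cdot|$ on such vectors. Your cornerstone instead is the sharper Lie-theoretic expansion
\begin{align*}
(y^{-1}g)^+\,=\,g^+ - A(k(g)^{-1}y) + \mathrm{O}(e^{-ct}),
\end{align*}
obtained by writing $y^{-1}k(g)\exp g^+ = \kappa^{-1}\exp\bigl(g^+-A(k(g)^{-1}y)\bigr)\tilde n$ via the Iwasawa decomposition of $k(g)^{-1}y$ and contracting the nilpotent $\tilde n=\exp(-g^+)n^{-1}\exp g^+$ by $\Ad(\exp(-g^+))$, then passing the exponentially small perturbation through the Cartan projection using its $1$-Lipschitz property (which is exactly \eqref{S2 Distance}). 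This works here because, for $1\le p<2$, the box $B_p(t)$ lies around $2t\rho_p$ with $\rho_p$ in the open chamber, so $\langle\alpha,g^+\rangle\gtrsim t$ for every $\alpha\in\Sigma^+$; it gives stronger error estimates (the $\bm{\pi}$ and $\varphi_0$-exponential ratios become $1+\mathrm{O}(t^{-1})$ and $1+\mathrm{O}(e^{-ct})$ respectively, rather than $1+\mathrm{O}(r(t)/t)$), with the final error $\mathrm{O}(r(t)/t)$ coming entirely from replacing $g^+/2t$ by $(2/p-1)\rho$ in the Gaussian factor. What the paper's angular lemmas buy is robustness: they are formulated for any $\Theta_t$ with $|H|\ge R(t)\to\infty$ and $R(t)^{-1}=\mathrm{O}(\theta(t)^2)$, so the same lemmas are reused verbatim for the $p=2$ case in \cref{prop: L2 variant} where the radius scale is $\sqrt t$ rather than $t$. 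Your approach would also adapt there (with error $\mathrm{O}(e^{-c\varepsilon(t)\sqrt t})$), but it is tied to the explicit geometry of the critical box, whereas the paper's lemmas isolate reusable general statements.
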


For the proof of \cref{S3 proposition kernel quotient} we need the following two lemmas, which describe the effect of a small translation on the critical region $K\{\exp B_p(t)\}K$, for $1\leq p \leq 2$. For our purposes, a modification of certain arguments in \cite{APZ2023} for $1\leq p<2$ would have been sufficient due the similar nature of the corresponding critical regions (see \cref{thm: lp critical}(i)). However, we have different propagation behaviors for $1\leq p <2$ and for the critical point $p=2$ which we also want to consider later. Moreover, since the arguments in \cite{APZ2023} were directly related to the heat equation, we prefer to present the following two results which are rather general and might be useful to other contexts. Recall that $(\widehat{H, \rho})$ denotes the angle between $H\in \mathfrak{a}$ and $\rho$.

\begin{lemma}\label{S3 lemma distance behaviors}
	Define in $\mathfrak{a}$ the set 
	$$\Theta_{t}=\{\, x^{+}\in \overline{\mathfrak{a}^{+}}: \quad |x^{+}|\geq R(t), \quad (\widehat{x^{+}, \rho})\leq \theta(t)\,\},$$
	where $R(t)$ and $\theta(t)$ are positive and monotonic, such that
	$$R(t)\rightarrow +\infty, \qquad \theta(t)\rightarrow 0, \qquad R(t)^{-1}= \textrm{O}(\theta(t)^2). $$ 
	Let $x\in K(\exp \Theta_{t})K $ and let $y$ be bounded. Then as $t\rightarrow \infty$,
	\begin{enumerate}[label=(\roman*)]
		\vspace{5pt}\item 
		$\frac{|(y^{-1}x)^{+}|}{|x^{+}|}, \frac{|x^{+}|}{|(y^{-1}x)^{+}|}$
		are both equal to $1+\textnormal{O}\big(R(t)^{-1}\big)$.
		
		\vspace{5pt}\item         
		$\frac{x^{+}}{|x^{+}|}$ and $\frac{(y^{-1}x)^{+}}{|(y^{-1}x)^{+}|}$
		are both equal to
		$\frac{\rho}{|\rho|}+\textnormal{O}\big(\theta(t)\big)$. 
		
		\vspace{5pt}\item
		For every $\alpha\in\Sigma^{+}$,
		$\frac{\langle{\alpha,(y^{-1}x)^{+}}\rangle}{
			\langle{\alpha,x^{+}}\rangle}
		=1+\textnormal{O}\big(\theta(t)\big)$.
		
		\vspace{5pt}\item
		$d(xK,eK)-d(xK,yK)
		=\langle{\frac{\rho}{|\rho|},A(k(x)^{-1}y)}\rangle
		+\textnormal{O}\big(\theta(t)\big)$, \quad $x=k(x)(\exp x^{+}) k'(x)$.
	\end{enumerate}
\end{lemma}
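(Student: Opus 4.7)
The plan is to prove the four items in order, with (iii) a formal consequence of (i)--(ii) and (iv) treated separately by a Busemann-function computation. For (i) I combine $|g^{+}|=|g|=d(gK,eK)$ with the triangle inequality to obtain $\big||(y^{-1}x)^{+}|-|x^{+}|\big|\le|y|=\mathrm{O}(1)$, and divide by $|x^{+}|\ge R(t)$.

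For (ii), the statement for $x^{+}$ is immediate from $\cos\theta(t)\ge 1-\theta(t)^{2}/2$, which bounds the component of $x^{+}/|x^{+}|$ perpendicular to $\rho/|\rho|$ by $\theta(t)$. For $(y^{-1}x)^{+}$, my strategy is to first prove the key estimate $\langle\rho,x^{+}\rangle-\langle\rho,(y^{-1}x)^{+}\rangle=\mathrm{O}(1)$, and then conclude via (i) and the angle hypothesis. To derive this estimate I would combine the local Harnack inequality \eqref{Harnack} (yielding $\varphi_{0}(y^{-1}x)\asymp\varphi_{0}(x)$) with the global spherical-function estimate \eqref{S2 global estimate phi0}. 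The angle hypothesis forces $\langle\alpha,x^{+}\rangle\gtrsim|x^{+}|$ for every $\alpha\in\Sigma^{+}$ (since $\langle\alpha,\rho\rangle>0$ and $\theta(t)\to 0$), so \eqref{S2 global estimate phi0} gives $\varphi_{0}(x)\gtrsim|x^{+}|^{|\Sigma_{r}^{+}|}e^{-\langle\rho,x^{+}\rangle}$, while the upper bound in \eqref{S2 global estimate phi0} applied to $y^{-1}x$, together with (i), gives $\varphi_{0}(y^{-1}x)\lesssim|x^{+}|^{|\Sigma_{r}^{+}|}e^{-\langle\rho,(y^{-1}x)^{+}\rangle}$. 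Feeding both into Harnack yields $e^{\langle\rho,x^{+}\rangle-\langle\rho,(y^{-1}x)^{+}\rangle}=\mathrm{O}(1)$. Together with $\langle\rho,x^{+}\rangle\ge|\rho||x^{+}|(1-\theta(t)^{2}/2)$ and $|(y^{-1}x)^{+}|\asymp|x^{+}|$, this gives $\cos(\widehat{(y^{-1}x)^{+},\rho})\ge 1-\mathrm{O}(\theta(t)^{2})$, hence the required $\mathrm{O}(\theta(t))$ angular bound, since $R(t)^{-1}=\mathrm{O}(\theta(t)^{2})$ can be absorbed.

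Claim (iii) then follows routinely: the positivity $\langle\alpha,\rho\rangle>0$ combined with (ii) gives $\langle\alpha,x^{+}\rangle\asymp|x^{+}|\langle\alpha,\rho\rangle/|\rho|$ (and similarly for $(y^{-1}x)^{+}$), so the target ratio splits as the product of the length ratio from (i) and an inner-product ratio from (ii), producing $1+\mathrm{O}(\theta(t))$ after using $R(t)^{-1}=o(\theta(t))$. For (iv) I would use a Busemann-function argument: by $K$-invariance, $d(xK,yK)=d((\exp x^{+})K,k(x)^{-1}yK)$, and setting $z:=k(x)^{-1}y$ reduces the task to estimating $|x^{+}|-d((\exp x^{+})K,zK)$ as $|x^{+}|\to\infty$ along a direction within $\theta(t)$ of $\rho/|\rho|$. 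Along the reference ray $s\mapsto\exp(s\rho/|\rho|)$ the classical Busemann formula yields the limit $\langle\rho/|\rho|,A(z)\rangle$, proved by writing $z=n\exp A(z)\kappa$ in Iwasawa and using $\exp(sH_{0})\,n\,\exp(-sH_{0})\to e$ for any regular $H_{0}$. The deviation of the $x^{+}$-ray from the reference ray by angle $\le\theta(t)$ contributes an additive $\mathrm{O}(\theta(t))$ error, by the $1$-Lipschitz property of $d$.

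The principal technical hurdle is the $\mathrm{O}(1)$ bound on $\langle\rho,x^{+}\rangle-\langle\rho,(y^{-1}x)^{+}\rangle$ in (ii): the Cartan projection is not globally Lipschitz under left translation, and the argument succeeds only because the angle hypothesis places $x^{+}$ deep enough in the chamber that the two sides of \eqref{S2 global estimate phi0} match up to a constant, yielding the sharp Harnack comparison needed.
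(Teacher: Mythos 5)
Your approach to (i) and (iii) is fine and matches the paper. The trouble starts in your treatment of (ii) for $(y^{-1}x)^{+}$ and in (iv).

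For (ii), you feed $\varphi_{0}(x)\gtrsim|x^{+}|^{|\Sigma_{r}^{+}|}e^{-\langle\rho,x^{+}\rangle}$ and $\varphi_{0}(y^{-1}x)\lesssim|x^{+}|^{|\Sigma_{r}^{+}|}e^{-\langle\rho,(y^{-1}x)^{+}\rangle}$ into the Harnack comparison $\varphi_0(x)\asymp\varphi_0(y^{-1}x)$. Chaining these gives $e^{-\langle\rho,x^{+}\rangle}\lesssim e^{-\langle\rho,(y^{-1}x)^{+}\rangle}$, i.e.\ $\langle\rho,(y^{-1}x)^{+}\rangle\le\langle\rho,x^{+}\rangle+C$. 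But what you actually need — to lower bound $\cos(\widehat{(y^{-1}x)^{+},\rho})$ — is the opposite inequality $\langle\rho,(y^{-1}x)^{+}\rangle\ge\langle\rho,x^{+}\rangle-C$. The combination you chose gives only the wrong side. To obtain the missing side by this method you would need a sharp lower bound on $\varphi_{0}(y^{-1}x)$, which requires knowing $(y^{-1}x)^{+}$ stays away from walls — precisely what you are trying to prove, so the argument is circular; the trivial lower bound $\varphi_0\gtrsim e^{-\langle\rho,\cdot\rangle}$ only yields an $\mathrm{O}(\log|x^{+}|)$ error, not $\mathrm{O}(1)$. Moreover, the detour is unnecessary: one has directly $|(y^{-1}x)^{+}-x^{+}|\le|y|$, a consequence of \eqref{S2 Distance} (via the symmetry $\mu(g^{-1})=-w_{0}\mu(g)$ one reduces to the obvious right-translation estimate $|(x^{-1}y)^{+}-(x^{-1})^{+}|\le d(x^{-1}yK,x^{-1}K)=|y|$), hence $\langle\rho,x^{+}\rangle-\langle\rho,(y^{-1}x)^{+}\rangle=\mathrm{O}(1)$ with both signs at once. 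This is what the paper uses, writing $(y^{-1}x)^{+}=x^{+}+\mathrm{O}(1)$. Your closing remark that "the Cartan projection is not globally Lipschitz under left translation" is therefore also incorrect.

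For (iv), the Busemann interpretation is good intuition, but the step "the deviation of the $x^{+}$-ray from the reference ray by angle $\le\theta(t)$ contributes an additive $\mathrm{O}(\theta(t))$ error, by the $1$-Lipschitz property of $d$" does not hold. Comparing the finite-time approximations $|x^{+}|-d(\exp(x^{+})K,zK)$ and $|x^{+}|-d(\exp(|x^{+}|\rho/|\rho|)K,zK)$ by $1$-Lipschitz of $d$ gives an error bounded by $d(\exp(x^{+})K,\exp(|x^{+}|\rho/|\rho|)K)=|x^{+}|\,|x^{+}/|x^{+}|-\rho/|\rho||$ (these two points lie in the flat $\exp(\mathfrak{a})K$), which is $\asymp|x^{+}|\theta(t)\ge R(t)\theta(t)$ and can blow up. You would instead need to compare the Busemann limits themselves (both equal to $\langle H,A(z)\rangle$ with $H$ ranging over the chamber, differing by $\mathrm{O}(\theta(t))$ since $z$ is bounded) and then control the rate of convergence of the finite approximation along the moving direction $x^{+}/|x^{+}|$ — which is what the paper does explicitly by an Iwasawa decomposition of $k^{-1}y$ and an algebraic expansion $|x^{+}|-|A(k^{-1}y)-x^{+}|=\langle x^{+}/|x^{+}|,A(k^{-1}y)\rangle+\mathrm{O}(|x^{+}|^{-1})$, together with an exponentially small remainder.
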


\begin{proof}
	The proof is an adaptation of \cite[Lemma 3.7]{P2024}. Assume that $d(xK, eK)\geq R(t)$ and $d(yK, eK)\leq \xi$, which implies by the triangle inequality that $d(xK, yK)\geq \frac{1}{2}\,R(t)$, for $t$ large enough.
	
	We deduce first $(i)$ by using
	\begin{align*}
		\frac{|(y^{-1}x)^{+}|}{|x^{+}|}\,
		= \frac{d(xK,yK)}{d(xK, eK)}
		=\,1+\textrm{O}\big(R(t)^{-1}\big).  
	\end{align*}
	The second assertion follows similarly.
	
	Next, for (ii), since $\cos(\widehat{x^{+}, \rho})=1+\textrm{O}(\theta(t)^2)$, we first have
	\begin{align}\label{eq: eq*}
		\left|\frac{x^{+}}{|x^{+}|}-\frac{\rho}{|\rho|} \right|^2
		=2\left(1-\langle  \frac{\rho}{|\rho|},\frac{x^{+}}{|x^{+}|}  \rangle \right)=\textrm{O}(\theta(t)^2),
	\end{align}
	while for the second asymptotics in (ii), we work similarly, observing that  $(y^{-1}x)^{+}=x^{+}+\textrm{O}(1)$ implies
	\begin{align}\label{star}
		\langle \frac{\rho}{|\rho|}, \frac{(y^{-1}x)^{+}}{|(y^{-1}x)^{+}|} \rangle & =\frac{|x^{+}|}{|(y^{-1}x)^{+}|} \langle  \frac{\rho}{|\rho|},\frac{x^{+}}{|x^{+}|}  \rangle +\textrm{O}(|x|^{-1}) \notag \\
		&= \langle  \frac{\rho}{|\rho|},\frac{x^{+}}{|x^{+}|}  \rangle +\textrm{O}(R(t)^{-1}) \notag\\
		&=1+\textrm{O}(\theta(t)^2), 
	\end{align}
	using (i), \eqref{eq: eq*} and the fact that  $R(t)^{-1}=\textrm{O}(\theta(t)^2)$.
	
	Let us next deduce (iii) from $(i)$ and $(ii)$. 
	For every positive root $\alpha$,
	\begin{align*}
		\frac{\langle{\alpha,(y^{-1}x)^{+}}\rangle}{\langle{\alpha,x^{+}}\rangle}\,
		&=\,
		\frac{\langle{\alpha,\frac{(y^{-1}x)^{+}}{|(y^{-1}x)^{+}|}}\rangle}{
			\langle{\alpha,\frac{x^{+}}{|x^{+}|}}\rangle}\,
		\frac{|(y^{-1}x)^{+}|}{|x^{+}|}\notag\\[5pt]
		&=\, 
		\frac{\langle{\alpha,\frac{\rho}{|\rho|}}\rangle
			+\textrm{O}\big( \theta(t)\big)}{
			\langle{\alpha,\frac{\rho}{|\rho|}}\rangle
			+\textrm{O}\big( \theta(t)\big)}\,
		\Big\lbrace{1+\textrm{O}\big(R(t)^{-1}\big)}\Big\rbrace\,
		=\,1+\textrm{O}(\theta(t)),
	\end{align*}
	by the assumption on $R(t)$ and the fact that $\theta(t)$ decreases to $0$.
	
	It remains to prove (iv). For that, we follow \cite[Lemma 3.8]{APZ2023}. To lighten notation, write $x=k(\exp{x^{+}})k'$
	in the Cartan decomposition and consider the Iwasawa decomposition
	$k^{-1}y=n(k^{-1}y)(\exp{A(k^{-1}y)})k''$ for some $k''\in{K}$. Then
	\begin{align*}\label{S3 distance decomposition in lemma}
		d(xK,yK)\,
		&=\,d\big(k(\exp{x^{+}})K,kn(k^{-1}y)(\exp{A(k^{-1}y)})K\big)
		\notag\\[5pt]
		&=\,d\big(\exp{(-x^{+})}[n(k^{-1}y)]^{-1}(\exp{x^{+}})K,
		\exp{(A(k^{-1}y)}-x^{+})K\big).
	\end{align*}
	and we write
	\begin{align*}
		d(xK,eK)-d(xK,yK)\,
		&=\,\overbrace{\vphantom{\Big|}
			d(xK,eK)-d\big(\exp{(A(k^{-1}y)}-x^{+})K,eK\big)}^{I}\\
		&+\,\underbrace{\vphantom{\Big|}
			d\big(\exp{(A(k^{-1}y)}-x^{+})K,eK\big)-d(xK,yK)}_{II}.
	\end{align*}
	On the one hand, $|II|$ tends exponentially fast to $0$, see \cite{APZ2023}.  On the other hand, we have
	\begin{align*}
		I\,
		=\,|x^{+}|-|A(k^{-1}y)-x^{+}|\,
		&=\,\frac{2\langle{x^{+},A(k^{-1}y)}\rangle-|A(k^{-1}y)|^{2}}{
			|x^{+}|+|A(k^{-1}y)-x^{+}|}\\[5pt]
		&=\,\big\langle{\tfrac{x^{+}}{|x^{+}|},A(k^{-1}y)}\big\rangle\,
		+\textrm{O}\big(\tfrac{1}{|x^{+}|}\big)\\[5pt]
		&=\,\big\langle{\tfrac{\rho}{|\rho|},A(k^{-1}y)}\big\rangle\,+\textrm{O}\big( \theta(t)\big)
	\end{align*}
	by using $(ii)$, the fact that $\lbrace{A(k^{-1}y)\,|\,k\in{K}}\rbrace$ 
	is a compact subset of $\mathfrak{a}$ and that $|x^{+}|^{-1}\lesssim R(t)^{-1}\lesssim\theta(t)$. This concludes the proof.
\end{proof}

The second lemma is also an adaptation of  \cite[Lemma 3.8]{P2024}.

\begin{lemma}\label{inner prod translation}
	Define in $\mathfrak{a}$ the set 
	$$\Theta_{t}=\{\, x^{+}\in \overline{\mathfrak{a}^{+}}: \quad |x^{+}|\geq R(t), \quad (\widehat{x^{+}, \rho})\leq \theta(t)\,\},$$
	where $R(t)$ and $\theta(t)$ are positive and monotonic, such that
	$$R(t)\rightarrow +\infty, \qquad \theta(t)\rightarrow 0, \qquad R(t)^{-1}= \textrm{O}(\theta(t)^2). $$ 
	Let $x\in K(\exp \Theta_{t})K $ and let $y$ be bounded. Then as $t\rightarrow \infty$,
	\begin{equation*}
		\langle \rho, x^{+}\rangle -\langle \rho,(y^{-1}x)^{+}\rangle=|\rho||x^{+}|-|\rho||(y^{-1}x)^{+}|+\textrm{O}( \theta(t)).
	\end{equation*}
\end{lemma}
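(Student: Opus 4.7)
My plan is to convert the claim into a comparison between the projection of a vector onto $\rho/|\rho|$ and its Euclidean norm, and then extract the desired cancellation via a polarization identity. First I would set $\hat{x}=x^{+}/|x^{+}|$ and $\hat{z}=(y^{-1}x)^{+}/|(y^{-1}x)^{+}|$, and write $\alpha,\beta$ for the angles these unit vectors make with $\rho/|\rho|$. Then $\langle\rho,x^{+}\rangle=|\rho|\,|x^{+}|\cos\alpha$ and likewise for $(y^{-1}x)^{+}$. Using the identity $1-\cos\alpha=\tfrac{1}{2}\bigl|\hat{x}-\rho/|\rho|\bigr|^{2}$, a direct rearrangement yields
\begin{align*}
	\bigl[\langle\rho,x^{+}\rangle-\langle\rho,(y^{-1}x)^{+}\rangle\bigr]&-|\rho|\bigl(|x^{+}|-|(y^{-1}x)^{+}|\bigr)\\
	&=-\tfrac{|\rho|}{2}\bigl(|x^{+}|\,|p|^{2}-|(y^{-1}x)^{+}|\,|q|^{2}\bigr),
\end{align*}
where $p=\hat{x}-\rho/|\rho|$ and $q=\hat{z}-\rho/|\rho|$. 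Thus it suffices to show that the right-hand side is $\mathrm{O}(\theta(t))$.

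Next I would split
\[
|x^{+}|\,|p|^{2}-|(y^{-1}x)^{+}|\,|q|^{2}=|x^{+}|\bigl(|p|^{2}-|q|^{2}\bigr)+\bigl(|x^{+}|-|(y^{-1}x)^{+}|\bigr)|q|^{2}.
\]
The second summand is immediate: \cref{S3 lemma distance behaviors}(ii) gives $|q|=\mathrm{O}(\theta(t))$, while the factor $|x^{+}|-|(y^{-1}x)^{+}|$ is bounded by $|y|=\mathrm{O}(1)$ (by $G$-invariance of the distance and the triangle inequality), contributing $\mathrm{O}(\theta(t)^{2})$. For the first summand I would use the polarization identity $|p|^{2}-|q|^{2}=\langle p-q,\,p+q\rangle$ together with $|p+q|\le|p|+|q|=\mathrm{O}(\theta(t))$ and the estimate
\[
|\hat{x}-\hat{z}|=\biggl|\tfrac{(y^{-1}x)^{+}-x^{+}}{|(y^{-1}x)^{+}|}+x^{+}\Bigl(\tfrac{1}{|(y^{-1}x)^{+}|}-\tfrac{1}{|x^{+}|}\Bigr)\biggr|=\mathrm{O}(|x^{+}|^{-1}),
\]
which follows from $|(y^{-1}x)^{+}-x^{+}|=\mathrm{O}(1)$ by \eqref{S2 Distance} and $|x^{+}|\ge R(t)\to\infty$. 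Therefore $|p|^{2}-|q|^{2}=\mathrm{O}\bigl(\theta(t)\,|x^{+}|^{-1}\bigr)$, and multiplication by $|x^{+}|$ yields the required $\mathrm{O}(\theta(t))$ bound.

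The main subtlety I foresee is precisely this second-order cancellation: a naive estimate via $1-\cos\alpha=\mathrm{O}(\theta(t)^{2})$ would contribute $\mathrm{O}(|x^{+}|\theta(t)^{2})$, which blows up because $\Theta_{t}$ imposes no a priori upper bound on $|x^{+}|$. The key observation, extracted by the polarization step, is that the unit vectors $\hat{x}$ and $\hat{z}$ lie at mutual distance $\mathrm{O}(|x^{+}|^{-1})$, hence much closer to each other than either is to $\rho/|\rho|$ (distance $\mathrm{O}(\theta(t))$); this extra factor of $|x^{+}|^{-1}$ exactly compensates the troublesome $|x^{+}|$ and makes the final error independent of how far $x^{+}$ lies from the origin.
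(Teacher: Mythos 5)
Your proof is correct and takes a genuinely different route from the paper's. The paper reduces the claim to showing $\cos(\widehat{(y^{-1}x)^{+},\rho})=\cos(\widehat{x^{+},\rho})+\mathrm{O}\bigl(\theta(t)\,|x^{+}|^{-1}\bigr)$ and then proves this by introducing coordinates $(\xi,\xi_{\ell})$, $(\zeta,\zeta_{\ell})$ in the basis $\delta_1,\dots,\delta_{\ell-1},\rho/|\rho|$, writing out the difference of cosines as a rational expression in these coordinates, and bounding each piece. Your argument bypasses the coordinate computation entirely: the identity $1-\cos\alpha=\tfrac12|\hat{x}-\rho/|\rho||^{2}$ converts the problem into a statement about $|x^{+}|\,|p|^{2}-|(y^{-1}x)^{+}|\,|q|^{2}$, and the polarization step $|p|^{2}-|q|^{2}=\langle p-q,\,p+q\rangle$ cleanly separates the small angular defect $|p+q|=\mathrm{O}(\theta(t))$ from the much smaller quantity $|p-q|=|\hat{x}-\hat{z}|=\mathrm{O}(|x^{+}|^{-1})$, which is exactly the gain the paper's coordinate bound \eqref{cos frac}--\eqref{cos frac asymp} is designed to produce. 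Both proofs rest on the same structural fact — that the two unit directions are within $\mathrm{O}(|x^{+}|^{-1})$ of each other, which is one order better than their distance to $\rho/|\rho|$ — but your polarization argument exposes this mechanism more transparently, works in a coordinate-free manner, and is arguably shorter; the paper's coordinate approach, on the other hand, produces the estimate in the precise form \eqref{cos trans} that it reuses verbatim elsewhere (e.g.\ in the proof of \cref{prop: L2 variant}).
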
	

\begin{proof}
	Since the rank one case is trivial, let us consider $\ell\geq 2$. Observe first that the claim follows by 
	\begin{align}\label{cos trans}
		\cos(\widehat{(y^{-1}x)^{+}, \rho})=  \cos(\widehat{x^{+}, \rho})+\textrm{O}(\theta(t)\,|x^{+}|^{-1}).
	\end{align}
	Indeed, by \eqref{cos trans} and taking into account that $\cos(\widehat{x^{+}, \rho})=1+\textrm{O}(\theta(t)^2)$, we get
	\begin{align*}
		\langle \rho, x^{+}\rangle -\langle \rho,(y^{-1}x)^{+}\rangle&= |\rho||x^{+}|\cos(\widehat{x^{+}, \rho})- |\rho||(y^{-1}g)^{+}|\cos(\widehat{(y^{-1}g)^{+}, \rho})\\
		&=|\rho|\,|x^{+}|\cos(\widehat{x^{+}, \rho})-|\rho|\,|(y^{-1}x)^{+}|\,(\cos(\widehat{x^{+}, \rho})+\textrm{O}(\theta(t)\,|x^{+}|^{-1})) \\
		&=(|\rho||x^{+}|-|\rho||(y^{-1}x)^{+}|)\cos(\widehat{x^{+}, \rho}) +\textrm{O}\left(\frac{|(y^{-1}x)^{+}|}{|x^{+}|}\theta(t)\right)\\
		&=|\rho||x^{+}|-|\rho||(y^{-1}x)^{+}| +\textrm{O}(\theta(t)),
	\end{align*}
	since $\frac{|(y^{-1}x)^{+}|}{|x^{+}|}=1+\textrm{O}(R(t)^{-1})$ according to \cref{S3 lemma distance behaviors}(i).
	
	Therefore, it remains to prove \eqref{cos trans}. We have $(y^{-1}x)^{+}=x^{+}+\textrm{O}(1)$ and $|(y^{-1}x)^{+}|=|x^{+}|+\textrm{O}(1)$, and also $\frac{x^{+}}{|x^{+}|}, \, \frac{(y^{-1}x)^{+}}{|(y^{-1}x)^{+}|}=\frac{\rho}{|\rho|}+\textrm{O}(\theta(t))$ according to \cref{S3 lemma distance behaviors}(ii), which in turn yields that $\frac{(y^{-1}x)^{+}}{|(y^{-1}x)^{+}|}=\frac{x^{+}}{|x^{+}|}+\textrm{O}(\theta(t))$. By \eqref{star}, we get $\cos\left( \widehat{ (y^{-1}x)^{+},\rho}\right)=1+\textrm{O}(\theta(t)^2)$,
	hence we conclude $(\widehat{(y^{-1}x)^{+}, \rho})=\textrm{O}(\theta(t))$.
	Next, recall the coordinates on $\mathfrak{a}$ with respect to the basis $\delta_1, ..., \delta_{\ell-1}, \rho/|\rho|$ introduced in \cref{Section.2 Prelim}, and write
	$$x^{+}=\left(\xi,\, \xi_{\ell}\right), \qquad (y^{-1}x)^{+}=\left(\zeta, \, \zeta_{\ell}\right).$$
	Since $\langle x^{+}, \rho \rangle=\xi_{\ell}\,|\rho|$, we get
	\begin{equation*}
		\xi_{\ell}=|x^{+}|\cos(\widehat{x^{+}, \rho}), \qquad |\xi|=|x^{+}|\sin(\widehat{x^{+}, \rho}). 
	\end{equation*} 
	Similarly, we have
	\begin{equation*}
		\zeta_{\ell}=|(y^{-1}x)^{+}|\cos(\widehat{(y^{-1}x)^{+}, \rho}), \qquad |\zeta|=|(y^{-1}x)^{+}|\sin(\widehat{(y^{-1}g)^{+}, \rho}).
	\end{equation*}
	Therefore
	\begin{equation}\label{xi coord}	
		\frac{|\xi|}{\xi_{\ell}}=\tan(\widehat{x^{+}, \rho})=\textrm{O}(\theta(t)), \qquad	|x^{+}|\asymp \xi_{\ell}
	\end{equation}
	and 
	\begin{equation}\label{zeta coord}	
		\frac{|\zeta|}{\zeta_{\ell}}=\tan(\widehat{(y^{-1}x)^{+}, \rho})=\textrm{O}(\theta(t)), \qquad	|(y^{-1}x)^{+}|\asymp \zeta_{\ell}.	
	\end{equation}
	Thus, we have
	\begin{align}\label{cos frac}
		\left| \cos(\widehat{(y^{-1}x)^{+}, \rho})- \cos(\widehat{x^{+}, \rho})  \right||x^{+}|&=\frac{\left|\zeta_{\ell}\,|x^{+}|-\xi_{\ell}\,|(y^{-1}x)^{+}|\right|}{|(y^{-1}x)^{+}|} \notag \\
		&\asymp \frac{\left| \zeta_{\ell}\sqrt{|\xi|^2+\xi_{\ell}^2}-\xi_{\ell}\sqrt{|\zeta|^2+\zeta_{\ell}^2}\right|}{\zeta_{\ell}} \notag \\
		&\asymp \frac{\left| \zeta_{\ell}^2\,|\xi|^2 - \xi_{\ell}^2\,|\zeta|^2\right|}{\zeta_{\ell}^2\,\xi_{\ell}\left( \sqrt{\left( \frac{|\xi|}{\xi_\ell}\right)^2+1} +\sqrt{\left( \frac{|\zeta|}{\zeta_\ell}\right)^2+1} \right)} \notag \\
		&\asymp \frac{\left| \zeta_{\ell}\,|\xi| - \xi_{\ell}\,|\zeta|\right| \left| \zeta_{\ell}\,|\xi| + \xi_{\ell}\,|\zeta|\right|}{\zeta_{\ell}^2\,\xi_{\ell}},
	\end{align}	
	due to \eqref{xi coord} and \eqref{zeta coord}. The fact that $(y^{-1}x)^{+}=x^{+}+\textrm{O}(1)$ implies $\zeta_{\ell}=\xi_{\ell}+\textrm{O}(1)$ and $|\zeta|=|\xi|+\textrm{O}(1)$, therefore
	\begin{equation}\label{cos frac asymp}
		\frac{ \zeta_{\ell}\,|\xi| - \xi_{\ell}\,|\zeta| }{\xi_{\ell}}=\textrm{O}(1), \qquad \frac{ \zeta_{\ell}\,|\xi| + \xi_{\ell}\,|\zeta|}{\zeta_{\ell}^2}=\textrm{O}\left(\frac{|\zeta|}{\zeta_{\ell}}\right)=\textrm{O}(\theta(t)).
	\end{equation}
	Altogether, we conclude  by \eqref{cos frac} and \eqref{cos frac asymp} that 
	$$ \cos(\widehat{(y^{-1}x)^{+}, \rho})- \cos(\widehat{x^{+}, \rho})= \textrm{O}(\theta(t)\,|x^{+}|^{-1}).  $$
\end{proof}

Now, let us turn to the proof of \cref{S3 proposition kernel quotient}.
\begin{proof}[Proof of \cref{S3 proposition kernel quotient}]
	First of all, notice that if $g\in K(\exp B_p(t))K$, $1\leq p<2$, then $g^{+}$ stays around the $\rho$-axis at angle $O(r(t)/t)$ and its distance to the origin is comparable to $t$. The same is true for $(y^{-1}g)^{+}$, as explained in the proof of \cref{inner prod translation} for $R(t)\asymp t$ and $\theta(t)=r(t)/t$. Thus \cref{S3 lemma distance behaviors} and \cref{inner prod translation} apply for $R(t)\asymp t$ and $\theta(t)=r(t)/t$.
	
	Thus, we may use the asymptotics \eqref{S2 heat kernel critical region} for the heat kernel, and the asymptotics \eqref{S2 phi0 far} for the ground spherical function to write
	\begin{align*}
		\frac{h_{t}(gK,yK)}{h_{t}(gK,eK)}\,
		&=\,\frac{h_{t}(\exp(y^{-1}g)^{+})}{h_{t}(\exp(g^{+}))}\\[5pt]
		&\sim\,
		\frac{\mathbf{b}\big(-i\tfrac{(y^{-1}g)^{+}}{2t}\big)^{-1}}{
			\mathbf{b}\big(-i\tfrac{g^{+}}{2t}\big)^{-1}}\,
		\frac{\bm{\pi}((y^{-1}g)^{+})}{\bm{\pi}(g^{+})}\,
		\frac{e^{-\langle{\rho,(y^{-1}g)^{+}}\rangle}}{
			e^{-\langle{\rho,g^{+}}\rangle}}\,
		\frac{e^{-\frac{|(y^{-1}g)^{+}|^{2}}{4t}}}{
			e^{-\frac{|g^{+}|^{2}}{4t}}}
	\end{align*}
	as $t\rightarrow\infty$. 
	The asymptotic behaviors of the first two factors are based on
	\cref{S3 lemma distance behaviors}(iii).
	On the one hand,
	\begin{align}\label{S3 counterexample 2}
		\frac{\bm{\pi}((y^{-1}g)^{+})}{\bm{\pi}(g^{+})}\,
		=\,\prod_{\alpha\in\Sigma_{r}^{+}}
		\frac{\langle{\alpha,(y^{-1}g)^{+}}\rangle}{\langle{\alpha,g^{+}}\rangle}\,
		=\,1+\textrm{O}\big(\tfrac{r(t)}{t}\big).
	\end{align}
	On the other hand, since $g^{+}, (y^{-1}g)^{+}=\textrm{O}(t)$, by using  \cref{lemma bfunc ratio} we have 
	\begin{align}\label{S3 counterexample 1}
		\frac{\mathbf{b}\big(-i\tfrac{(y^{-1}g)^{+}}{2t}\big)^{-1}}{
			\mathbf{b}\big(-i\tfrac{g^{+}}{2t}\big)^{-1}}\,=\,1+\textrm{O}\big(\tfrac{1}{t}\big).
	\end{align}
	For the Gaussian factors, we notice on the one hand that
	\begin{align}\label{S3 norm distance}
		-\frac{|(y^{-1}g)^{+}|^{2}}{4t}+\frac{|g^{+}|^{2}}{4t}\,
		&=\,\frac{|g^{+}|+|(y^{-1}g)^{+}|}{4t}\,
		\big(|g^{+}|-|(y^{-1}g)^{+}|\big)\notag\\[5pt]
		&=\,\underbrace{\vphantom{\Big|}
			\frac{d(gK,eK)+d(gK,yK)}{4t}
		}_{=\,\left( \frac{2}{p}-1\right)|\rho|+\textrm{O}\big(\tfrac{r(t)}{t}\big)}
		\underbrace{\vphantom{\frac{|g^{+}|+|(y^{-1}g)^{+}|}{4t}}
			\big\lbrace{d(gK,eK)-d(gK,yK)}\big\rbrace
		}_{=\,\langle{\frac{\rho}{|\rho|},A(k^{-1}y)}\rangle
			+\textrm{O}\big(\tfrac{r(t)}{t}\big)}\notag\\[5pt]
		&=\,\left(\frac{2}{p}-1\right)\langle{\rho,A(k(g)^{-1}y)}\rangle
		+\textrm{O}\big(\tfrac{r(t)}{t}\big)
	\end{align}
	according to \cref{S3 lemma distance behaviors}(iv) and the fact that $|A(k^{-1}y)|\leq |k^{-1}y|=|y|$, thus bounded. Therefore
	\begin{align}\label{S3 counterexample 3}
		e^{-\frac{|(y^{-1}g)^{+}|^{2}}{4t}+\frac{|g^{+}|^{2}}{4t}}\,
		=\,e^{\left(\frac{2}{p}-1\right)\langle{\rho,A(k(g)^{-1}y)}\rangle}+\textrm{O}\big(\tfrac{r(t)}{t}\big).
	\end{align}
	On the other hand, applying \cref{inner prod translation} for  $\theta(t)=\frac{r(t)}{t}$, we get
	\begin{align}\label{S3 counterexample 4}
		e^{-\langle{\rho,(y^{-1}g)^{+}}\rangle+\langle{\rho,g^{+}}\rangle}\,
		=\,e^{\langle{\rho,A(k(g)^{-1}y)}\rangle}+\textrm{O}\big(\tfrac{r(t)}{t}\big).
	\end{align}
	In conclusion, we deduce from \eqref{S3 counterexample 1}, 
	\eqref{S3 counterexample 2}, \eqref{S3 counterexample 3} and 
	\eqref{S3 counterexample 4} that 
	\begin{align*}
		\frac{h_{t}(gK,yK)}{h_{t}(gK,eK)}\,
		=\,e^{\frac{2}{p}\langle{\rho,\,A(k(g)^{-1}y)}\rangle}
		+\textrm{O}\big(\tfrac{r(t)}{t}\big)
	\end{align*}
	with $\frac{r(t)}{t}\rightarrow0$ as $t\rightarrow\infty$.
\end{proof}

We are now ready to prove the $L^p$ convergence of \cref{S1 Main thm 2}, $1\leq p<2$, for continuous and compactly supported initial data. Recall that any element of the group $G$ can be written as $g=k(g)(\exp g^{+})k'(g)$ in the Cartan decomposition. Assume that $\supp u_0\subseteq K\{\exp B(0,\xi)\}K$, for some $\xi>0$. We aim to study the difference
\begin{align}\label{S4 difference p1,2}
	u(t,g)-M_p(u_0)(g)\,h_{t}(g)\,
	&=(u_{0}*h_{t})(g)
	\,-\,
	{h}_{t}(g)\,
	\int_G \diff{y}\,u_{0}(yK)\,e^{\frac{2}{p} \langle \rho , A(k(g)^{-1}y)\rangle}
	\notag\\[5pt]
	&=\,{h}_{t}(g)\,
	\int_{G}\diff{y}\,u_{0}(yK)\,
	\Big\lbrace{
		\frac{h_{t}(y^{-1}g)}{h_{t}(g)}
		-e^{\frac{2}{p} \langle \rho , A(k(g)^{-1}y)\rangle}
	}\Big\rbrace.
\end{align}
According to the previous lemma, we have
\begin{align*}
	\frac{h_{t}(y^{-1}g)}{h_{t}(g)}
	-e^{\frac{2}{p} \langle \rho , A(k(g)^{-1}y)\rangle}\,
	=\,\mathrm{O}\Big(\frac{r(t)}{t}\Big)
	\qquad\forall\,g\in K\{\exp B_p(t)\}K, \, \, 1\leq p<2,  \, \, 
	\forall\,y\in \textrm{supp}u_0.
\end{align*}
Therefore over the critical region the $L^p$ integral of
${u}(t,\cdot)-M_p(u_0)\,{h}_{t}$, i.e. the quantity
\begin{align*}
	\left( \int_{K\left\{\exp B_p(t)\right\} K}
	\textrm{d}{g}\,    
	|u(t,g)-{M}_p(u_0)(g)\,{h}_{t}(g)|^p\, \right)^{1/p}
	\lesssim\,
	\frac{r(t)}{t}\,
	\underbrace{\vphantom{\Big|}
		\left(	\int_{G}\textrm{d}{g}\,{h}_{t}(g)^p \right)^{1/p}
	}_{\|h_t\|_p}\,\|u_0\|_{1},
\end{align*}
is $\textrm{o}(\|h_t\|_p)$.

It remains for us to check that the integral
\begin{align*}
	\left( \int_{G\smallsetminus K\left\{\exp B_p(t)\right\} K}\textrm{d}{g}\,
	|u(t,g)-M_p(u_0)(g)\,{h}_{t}(g)|^p\, \right)^{1/p}
	&\le\,
	\left( \int_{G\smallsetminus K\left\{\exp B_p(t)\right\} K}\textrm{d}{g}\,
	|u(t,g)|^p \right)^{1/p}\\[5pt]
	&+\,
	\left( \int_{G\smallsetminus K\left\{\exp B_p(t)\right\} K}\textrm{d}{g}\,    
	|{M}_p(u_0)(g)|^p \,h_{t}(g)^p  \right)^{1/p}
\end{align*}
is also $\textrm{o}(\|h_t\|_p)$. On the one hand, we know that ${M}_p(u_0)(\cdot)$ is bounded when $u_0$ is compactly supported,
and that the heat kernel ${h}_{t}$ asymptotically concentrates, in the $L^p$ sense, in 
$K\left\{\exp B_p(t)\right\} K$, hence
\begin{align*}
	\|h_t\|_p^{-1}	\left( \int_{G\smallsetminus K\left\{\exp B_p(t)\right\} K}\textrm{d}{g}\,
	|{M}_p(g)|^p \, {h}_{t}(g)^p\right)^{1/p}\,
	\longrightarrow\,0
\end{align*} 
as $t\rightarrow\infty$. Notice that the only important property of $M_p(u_0)$ at this point is simply the fact that it is bounded and not its expression per se. On the other hand, notice that for all
$y\in\supp{u_{0}}$ and for all $g\in{G}$ such that
$g^{+}\notin B_p(t)$, we have $(y^{-1}g)^{+}\,\notin\,{B_p(t)}'$, where
$$
B_p(t)'=\left\{H \in \mathfrak{a}: \quad 2| \rho_p|t-\frac{1}{2}r(t) \leq| H|\leq 2| \rho_p | t+\frac{1}{2}r(t),\quad  {(\widehat{H, \rho})}\leq \frac{r(t)}{2t}\right\}, \quad 1\leq p<2,$$
(see for instance the proof of \cite[Lemma 4.1]{P2024} or see \cite[p.19]{APZ2023} and use the fact that for the longest element $w_0$ of the Weyl group we have $(x^{-1})^{+}=-w_0.x^{+}$ for all $x\in G$ and $w_0.\rho=-\rho$).
Hence if $\supp u_0 \subseteq K\{\exp B(0, \xi)\}K$, by the Minkowski inequality we get
\begin{align*}
	\|f \ast h_t\|_{L^p(G\setminus K\{\exp B_p(t)\}K)}
	&= \left(\int_{G\setminus K\{\exp B_p(t)\}K} \textrm{d}g \left|\int_{K\{\exp B(0, \xi)\}K}\textrm{d}y \,u_0(y) h_t(y^{-1}g)\right|^p \right)^{1/p}\\ 
	&\leq  \int_{K\{\exp B(0, \xi)\}K} \textrm{d}y \,|u_0(y)|\left(\int_{G\setminus K\{\exp B_p(t)\}K} \textrm{d}g \, h_t(y^{-1}g)^p\right)^{1/p} \\
	&\leq  \int_{K\{\exp B(0, \xi)\}K} \textrm{d}y\, |u_0(y)|\left(\int_{G\setminus K\{\exp B_p(t)'\}K} \textrm{d}z \,h_t(z)^p\right)^{1/p},
\end{align*}
thus
$$\|u(t, \, \cdot \,)\|_{L^p(G\smallsetminus K\left\{\exp B_p(t)\right\}K)}=\textrm{o}(\|h_t\|_p).$$
This concludes the proof of the heat asymptotics in $L^{p}$, $1\leq p<2$, and for initial data
$u_{0}\in\mathcal{C}_{c}(\mathbb{X})$.

\subsection{$L^p$ heat asymptotics for other initial data, $1\leq p<2$}\label{Subsect other data 1 to2 2}

So far we have obtained above the long-time asymptotic convergence in $L^{p}$
($1\le{p}<2$) for the heat equation 
with compactly supported initial data, in the sense of \eqref{S1 Main thm 2 convergence}.
In this section we discuss convergence for further classes of initial data.

\begin{corollary}
	The $L^p$ asymptotic convergence \eqref{S1 Main thm 2 convergence}, $1\leq p<2$, still holds with initial data
	$u_{0}\in \mathcal{L}^p(G//K)$.
\end{corollary}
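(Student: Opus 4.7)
The strategy is density: approximate $u_0\in\mathcal{L}^p(G//K)$ by bi-$K$-invariant compactly supported functions, for which the result has just been established, and control the error in the natural $\mathcal{L}^p(G//K)$-seminorm. Since $u_0$ is bi-$K$-invariant, \cref{remark bi-K} shows that $M_p(u_0)(\cdot)$ reduces to the constant $M_p:=\mathcal{H}u_0(i(2/p-1)\rho)$, so the statement to prove is $\|h_t\|_p^{-1}\,\|u_0\ast h_t - M_p\,h_t\|_p\to 0$.

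Concretely, I would pick a bi-$K$-invariant cutoff $\chi_n\in\mathcal{C}_c(K\backslash G/K)$ equal to $1$ on $K\{\exp B(0,n)\}K$ and vanishing outside $K\{\exp B(0,n+1)\}K$, and set $u_0^{(n)}:=u_0\,\chi_n\in\mathcal{C}_c(\mathbb{X})$. By dominated convergence,
$$
\varepsilon_n\,:=\,\int_{G}|u_0-u_0^{(n)}|(x)\,\varphi_{i(2/p-1)\rho}(x)\,\diff x\,\longrightarrow\,0\qquad\text{as }n\to\infty.
$$
Writing $M_p^{(n)}:=\mathcal{H}u_0^{(n)}(i(2/p-1)\rho)$ and splitting with the triangle inequality,
$$
\|h_t\|_p^{-1}\,\|u_0\ast h_t - M_p\,h_t\|_p\,\le\,A_n(t)+B_n(t)+C_n,
$$
where $A_n(t):=\|h_t\|_p^{-1}\,\|(u_0-u_0^{(n)})\ast h_t\|_p$, $B_n(t):=\|h_t\|_p^{-1}\,\|u_0^{(n)}\ast h_t-M_p^{(n)}h_t\|_p$ and $C_n:=|M_p^{(n)}-M_p|$. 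For each fixed $n$, $B_n(t)\to 0$ as $t\to\infty$ by the compactly supported case already proved, and $C_n\le\varepsilon_n$ by definition of $M_p^{(n)}$ and $M_p$.

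The crux is to bound $A_n(t)$ uniformly in $t$. For this I would invoke the Herz majorization principle: for $f$ bi-$K$-invariant and $1\le p\le 2$,
$$
\|f\ast g\|_{L^p(G)}\,\le\,\Big(\int_G |f(x)|\,\varphi_{i(2/p-1)\rho}(x)\,\diff x\Big)\,\|g\|_{L^p(G)},
$$
which reduces to Young's inequality at $p=1$ (since $\varphi_{i\rho}\equiv 1$), to the Plancherel formula at $p=2$ (since $\|T_f\|_{2\to2}=\sup_{\lambda\in\mathfrak{a}}|\mathcal{H}f(\lambda)|\le\mathcal{H}|f|(0)$), and follows by Stein interpolation on the intermediate strip. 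Applying it with $f=u_0-u_0^{(n)}$ and $g=h_t$ gives $A_n(t)\le\varepsilon_n$ uniformly in $t>0$. Combining the three bounds, $\limsup_{t\to\infty}\|h_t\|_p^{-1}\|u_0\ast h_t-M_p h_t\|_p\le 2\varepsilon_n$ for every $n$, and letting $n\to\infty$ closes the argument.

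The main obstacle is the uniform-in-$t$ control of $A_n(t)$: the Herz inequality is precisely the tool that makes the weight $\varphi_{i(2/p-1)\rho}$ in the definition of $\mathcal{L}^p(G//K)$ the right one, since it is exactly the bi-$K$-invariant convolutor norm on $L^p(G)$ in this range. Everything else is routine truncation and the already-established compactly supported case.
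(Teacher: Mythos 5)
Your argument is correct and follows the same route the paper sketches in one line: reduce the mass function to the constant $M_p=\mathcal{H}u_0(i(2/p-1)\rho)$ via \cref{remark bi-K}, approximate in the $\mathcal{L}^p(G//K)$-seminorm, invoke the already-proved compactly supported case for $B_n(t)$, and control $A_n(t)$ uniformly in $t$ via the Herz majorization principle, which is precisely what makes the $\varphi_{i(2/p-1)\rho}$-weight the natural one. The endpoint identifications (Young at $p=1$, Plancherel at $p=2$) and the three-term split are all correct.

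One small imprecision worth fixing: $u_0^{(n)}=u_0\chi_n$ is compactly supported and bi-$K$-invariant, but it need not be continuous (elements of $\mathcal{L}^p(G//K)$ are merely measurable), so it is not automatically in $\mathcal{C}_c(\mathbb{X})$, which is the class for which the paper establishes the base case in \cref{Section.4 lp 1 to 2}. To apply that case verbatim, pick instead $U_0\in\mathcal{C}_c(G//K)$ with $\int_G|u_0-U_0|\,\varphi_{i(2/p-1)\rho}\,\diff g<\varepsilon$ (possible since $\mathcal{C}_c(G//K)$ is dense in the weighted $L^1$ space); this is exactly the density step the paper carries out explicitly in the proof of \cref{Cor Lp p12}. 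With that adjustment the proof is complete and matches the paper's intended argument.
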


\begin{proof}
	The proof follows from the fact that mass functions boil down to constants (see \cref{remark bi-K}), a density argument and the Herz principle, see for instance \cite{APZ2023, NRS24}.
\end{proof}

\begin{corollary}\label{Cor Lp p12}
	The $L^p$ asymptotic convergence \eqref{S1 Main thm 2 convergence}, $1\leq p<2$, still holds without assuming bi-$K$-invariance for $u_0$ but under the assumption that ${u}_{0}\in L_{w_p}(\mathbb{X})$, that is, 
	\begin{align}\label{S4 other data assumption0}
		\int_{G}\diff{g}\,|u_{0}(gK)|e^{\frac{2}{p}\langle{\rho,g^{+}}\rangle}\,
		<\,\infty.
	\end{align}
\end{corollary}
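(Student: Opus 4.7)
The plan is a density argument reducing matters to the compactly supported case already established. The weight $w_p(gK)=e^{(2/p)\langle\rho,g^+\rangle}$ is continuous and locally bounded on $\mathbb{X}$, so $\mathcal{C}_c(\mathbb{X})$ is dense in $L_{w_p}(\mathbb{X})$ by standard truncation and mollification. Given $u_0\in L_{w_p}(\mathbb{X})$, I would fix a sequence $u_0^n\in\mathcal{C}_c(\mathbb{X})$ with $\|u_0-u_0^n\|_{L_{w_p}}\to 0$, write $u^n(t,\cdot)=u_0^n*h_t$, and split the target quantity via the triangle inequality:
\begin{align*}
\|u(t,\cdot)-M_p(u_0)\,h_t\|_p
&\leq\|(u_0-u_0^n)*h_t\|_p\\
&\quad+\|u^n(t,\cdot)-M_p(u_0^n)\,h_t\|_p\\
&\quad+\|(M_p(u_0^n)-M_p(u_0))\,h_t\|_p.
\end{align*}
After dividing by $\|h_t\|_p$, the middle piece vanishes as $t\to\infty$ for every fixed $n$ by the compactly supported result proved above; the task thus reduces to bounding the other two pieces uniformly in $t$ by a multiple of $\|u_0-u_0^n\|_{L_{w_p}}$, after which $t\to\infty$ then $n\to\infty$ concludes.

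For the first piece I would apply Minkowski's integral inequality together with the bi-$K$-invariance of $h_t$, which gives $\|h_t(y^{-1}\cdot)\|_p=\|h_t\|_p$ for every $y\in G$ by left-invariance of the measure on $\mathbb{X}$. Since $w_p\geq 1$ pointwise, this yields
$$\|(u_0-u_0^n)*h_t\|_p\leq\|u_0-u_0^n\|_1\,\|h_t\|_p\leq\|u_0-u_0^n\|_{L_{w_p}}\,\|h_t\|_p.$$
For the third piece, the linearity of $M_p$ in the initial datum combined with the pointwise bound in \cref{remark: mass bdd weight} yields
$$\|M_p(u_0^n)-M_p(u_0)\|_\infty=\|M_p(u_0^n-u_0)\|_\infty\leq\|u_0^n-u_0\|_{L_{w_p}},$$
which, multiplied by $\|h_t\|_p$, provides the matching bound.

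The main, and rather mild, obstacle is arranging that the two continuity estimates (for convolution with $h_t$, and for the mass functional $M_p$) are controlled by \emph{the same} norm on $u_0$, so that a single density approximation handles both simultaneously. The choice of the weight $w_p$ is tailored precisely for this: the crude inclusion $L_{w_p}(\mathbb{X})\subseteq L^1(\mathbb{X})$ handles the convolution via Minkowski, while \cref{remark: mass bdd weight} produces the matching $L^\infty$ control on $M_p$. Beyond these two ingredients and the standard density statement, the argument is routine.
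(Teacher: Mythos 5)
Your proof is correct and follows essentially the same density argument as the paper: approximate $u_0$ in the weighted $L^1$ norm by compactly supported data, split via the triangle inequality into three terms, control the convolution term by Minkowski's inequality using $w_p\geq 1$, control the mass-function term by the pointwise bound $\langle\rho,A(k^{-1}y)\rangle\leq\langle\rho,y^+\rangle$ from \cref{remark: mass bdd weight}, and let the middle term vanish by the already-established $\mathcal{C}_c$ case. The only cosmetic difference is that you phrase the approximation as a sequence $u_0^n$ rather than a single $\varepsilon/3$-approximant, which is equivalent.
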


\begin{proof}
	Recall first that under the assumption \eqref{S4 other data assumption0}, the mass function is bounded, as already mentioned in the \cref{remark: mass bdd weight}.
	For proving the $L^{p}$ convergence, 
	we argue by density.
	Since $u_{0}(gK)e^{\frac{2}{p}\langle{\rho,g^{+}}\rangle}$ belongs to $L^{1}(\mathbb{X})$,
	there exists a function $U_{0}(gK)e^{\frac{2}{p}\langle{\rho,g^{+}}\rangle}$ in
	$\mathcal{C}_{c}(\mathbb{X})$ such that 
	\begin{align*}
		\int_{G}\diff{g}\,
		|u_{0}(gK)-U_{0}(gK)|\,e^{\frac{2}{p}\langle{\rho,g^{+}}\rangle}\,
		<\,\tfrac{\varepsilon}{3}
	\end{align*}
	for every $\varepsilon>0$. So we clearly also have
	\begin{align*}
		\|u_0-U_0\|_1=	\int_{G}\diff{g}\,
		|u_{0}(gK)-U_{0}(gK)|<\,\tfrac{\varepsilon}{3}.
	\end{align*}
	Let $U={U}_{0}*{h}_{t}$ be the corresponding
	solution to the heat equation, and denote by 
	${M}_p(U_0)(g)=\int_G \diff y\, U_0(yK)\, e^{\frac{2}{p}\langle \rho, A(k(g)^{-1}y)\rangle}$ 
	the corresponding mass of $U_{0}$. On the one hand, there exists $T>0$
	such that for all $t>T$, we have
	\begin{align*}
		\|{U}(t,\,\cdot\,)
		-{M}_{p}(U_0)(\cdot)\,{h}_{t}\|_{L^{p}(\mathbb{X})}\,
		<\,\tfrac{\varepsilon}{3} \, \|h_t\|_p	
	\end{align*}
	since ${U}_{0}\in\mathcal{C}_{c}(\mathbb{X})$. 
	On the other hand, for every $g\in{G}$, we have
	\begin{align*}
		|M_p(u_0)(g)-M_p(U_0)(g)|\,&\leq
		\int_{G}\diff{y}\,|u_{0}(yK)-U_{0}(gK)|\,e^{\frac{2}{p}\langle \rho, A(k(g)^{-1}y)\rangle}\\[5pt]
		&\le\,\int_{G}\diff{y}\,|u_{0}(yK)-U_{0}(gK)|\,
		e^{\frac{2}{p}\langle{\rho,y^{+}}\rangle}\,
		<\,\tfrac{\varepsilon}{3}.
	\end{align*}
	We conclude by using the above relations and the triangle inequality:
	\begin{align*}
		\|u(t, \cdot)-M_p(u_0)(\cdot)\,h_t\|_{L^{p}(\mathbb{X})} &\leq \underbrace{\|u(t, \cdot)-U(t, \cdot)\|_p}_{\leq \|u_0-U_0\|_1\, \|h_t\|_p} + \|(M_p(U_0)(\cdot)-M_p(u_0)(\cdot))\, h_t\|_p \\
		& +\|U(t,\cdot)-M_p(U_0)(\cdot)\,h_t\|_p\\
		&< \frac{\varepsilon}{3}\, \|h_t\|_p+\frac{\varepsilon}{3}\, \|h_t\|_p+\frac{\varepsilon}{3}\, \|h_t\|_p \\
		&< \varepsilon\, \|h_t\|_p.
	\end{align*}
\end{proof}

Finally, the proposition that follows shows that except for $M_2(u_0)(g)=\frac{(u_0\ast\varphi_{0})(g)}{\varphi_{0}(g)}$ (which we will prove that it is a suitable mass function in the next section), also the function
$$\widetilde{M}_2(u_0)(g)=\int_G \textrm{d}y \, u_0(yK)\, e^{\langle \rho, A(k(g)^{-1}y) \rangle}$$
yields the $L^2$ convergence of \eqref{S1 Main thm 2 convergence}. In this sense, the index $p=2$ resembles both cases $p<2$ and $p>2$, so it is indeed a critical point.

\begin{proposition} \label{prop: L2 variant}
	The $L^2$ asymptotic convergence 
	\begin{align}\label{conv L2 variant}
		\|h_t\|_2^{-1}	\, \|u(t,\,\cdot\,)-\widetilde{M}_2(u_0)(\cdot)\,h_{t}\|_{L^{2}(\mathbb{X})}\,
		\longrightarrow\,0
		\qquad\textnormal{as}\quad\,t\rightarrow\infty
	\end{align}
	holds for $u_0\in \mathcal{C}_c(\mathbb{X})$, for $u_0\in \mathcal{L}^2(G//K)$ or for $u_{0}$ such that 
	\begin{align*}
		\int_{G}\diff{g}\,|u_{0}(gK)|e^{\langle{\rho,g^{+}}\rangle}\,
		<\,\infty.
	\end{align*}
\end{proposition}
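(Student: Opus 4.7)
The strategy is to run the scheme of Section~\ref{Section.4 lp 1 to 2} with $p=2$, replacing the box $B_p(t)$ by the $L^2$-critical region $B_2(t)$ from Theorem~\ref{thm: lp critical}(ii). By the density/triangle-inequality argument of Corollary~\ref{Cor Lp p12} it suffices to prove~\eqref{conv L2 variant} for $u_0\in\mathcal{C}_c(\mathbb{X})$; the bi-$K$-invariant case $u_0\in\mathcal{L}^2(G//K)$ then follows because the computation of Remark~\ref{remark bi-K}, applied with $2/p=1$, shows that $\widetilde M_2(u_0)(g)=\mathcal Hu_0(0)$ is constant and coincides with $M_2(u_0)$, reducing the claim to~\eqref{Ind convergence}. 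The boundedness of $\widetilde M_2(u_0)$ under the weighted $L^1$ hypothesis follows from~\eqref{S2 ineq Iwasawa Cartan}, which gives $|\widetilde M_2(u_0)(g)|\le\int_G|u_0(y)|\,e^{\langle\rho,y^+\rangle}\,\diff y$.

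For $u_0\in\mathcal{C}_c(\mathbb{X})$ with $\supp u_0\subseteq K\{\exp B(0,\xi)\}K$, I would write
\[
u(t,g)-\widetilde M_2(u_0)(g)\,h_t(g)=h_t(g)\int_G\diff y\,u_0(y)\left\{\frac{h_t(y^{-1}g)}{h_t(g)}-e^{\langle\rho,A(k(g)^{-1}y)\rangle}\right\}
\]
and split the $L^2$-integral into $K\{\exp B_2(t)\}K$ and its complement. The complement is handled exactly as in Section~\ref{Section.4 lp 1 to 2}: boundedness of $\widetilde M_2(u_0)$ combined with Theorem~\ref{thm: lp critical}(ii) yields an $o(\|h_t\|_2)$-bound for the term $\widetilde M_2(u_0)\,h_t$; and a Minkowski inequality applied to $u_0\ast h_t$, together with the observation that $g^+\notin B_2(t)$ with $|y|\le\xi$ forces $(y^{-1}g)^+$ to lie outside a slightly shrunk box $B_2(t)'$ (obtained by shifting each defining threshold of $B_2(t)$ by $\xi$, still satisfying the asymptotic conditions of Theorem~\ref{thm: lp critical}(ii)), gives the same bound for $u(t,\cdot)$.

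The core of the argument is the uniform asymptotic
\[
\frac{h_t(y^{-1}g)}{h_t(g)}=e^{\langle\rho,A(k(g)^{-1}y)\rangle}+o(1)\qquad\text{as }t\to\infty,
\]
valid for $g\in K\{\exp B_2(t)\}K$ and $y\in\supp u_0$. Choosing $r_2(t)=o(t)$ (still compatible with $r_2(t)/\sqrt t\to\infty$) and noting that $\omega(g^+)\ge r_3(t)\to\infty$ forces both $g^+$ and $(y^{-1}g)^+$ to stay deep in the positive chamber, the sharp asymptotic~\eqref{S2 heat kernel critical region} applies. Lemma~\ref{lemma bfunc ratio} makes the $\mathbf b$-ratio equal to $1+\textnormal{O}(t^{-1})$, and the elementary estimate $\bigl||g^+|^2-|(y^{-1}g)^+|^2\bigr|=\textnormal{O}(|g^+|)=\textnormal{O}(r_2(t))$ makes the Gaussian factor $1+o(1)$. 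For the remaining factor I would appeal to~\eqref{S2 phi0 far} to write
\[
\frac{\varphi_0(y^{-1}g)}{\varphi_0(g)}\sim\frac{\bm\pi((y^{-1}g)^+)}{\bm\pi(g^+)}\,e^{\langle\rho,g^+-(y^{-1}g)^+\rangle},
\]
the $\bm\pi$-ratio being $1+\textnormal{O}(1/r_3(t))$, and then establish the identity $(y^{-1}g)^+=g^+-A(k(g)^{-1}y)+o(1)$ by decomposing $\tilde y^{-1}\exp(g^+)=\kappa(\tilde y)^{-1}\exp(g^+-A(\tilde y))\,n'$ with $\tilde y=k(g)^{-1}y$, where $n'=\exp(g^+-A(\tilde y))\,n(\tilde y)^{-1}\exp(-(g^+-A(\tilde y)))$ tends to $e$ by the contraction of $\Ad(\exp H)$ on $\mathfrak n$ for $H$ deep in the chamber.

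The principal obstacle is the last step, namely obtaining $(y^{-1}g)^+=g^+-A(k(g)^{-1}y)+o(1)$ \emph{uniformly} under the qualitative assumption $\omega(g^+)\ge r_3(t)\to\infty$ alone, without the directional constraint $g^+/|g^+|\to\rho/|\rho|$ that was available for $1\le p<2$; consequently Lemmas~\ref{S3 lemma distance behaviors}--\ref{inner prod translation} cannot be invoked, and the Busemann-type argument above has to be made quantitative in terms of $r_3(t)$ (rather than $\theta(t)$).
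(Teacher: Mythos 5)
Your reduction steps (density argument, bi-$K$-invariant case via $\widetilde M_2(u_0)=\mathcal Hu_0(0)$, boundedness of the mass, Minkowski on the complement of the critical region) match the paper. The genuine gap is exactly the one you flag at the end, and you do not close it.

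The paper's way out is a different move from the Busemann-type decomposition you sketch. Instead of trying to make the estimate $(y^{-1}g)^+=g^+-A(k(g)^{-1}y)+o(1)$ uniform under the wall constraint $\omega(g^+)\ge r_3(t)$ alone, the paper \emph{replaces} the $L^2$ critical region of Theorem~\ref{thm: lp critical}(ii) by the sharper region
\[
\widetilde B_2(t)=\Bigl\{g^+\in\overline{\mathfrak a^+}:\ \varepsilon(t)\sqrt t\le|g^+|\le\tfrac{\sqrt t}{\varepsilon(t)},\ (\widehat{g^+,\rho})\le\varepsilon(t)\Bigr\},
\]
with $\varepsilon(t)\to0$, $\varepsilon(t)^{10}\sqrt t\to\infty$, asserts that $K\{\exp\widetilde B_2(t)\}K$ is still an $L^2$ critical region, and observes that the angular constraint plus the lower radial bound force $\mu(g^+)\gtrsim\varepsilon(t)\sqrt t$. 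Once the directional constraint is reinstated, Lemmas~\ref{S3 lemma distance behaviors}(iii)--(iv) and~\ref{inner prod translation} do apply, with $R(t)=\varepsilon(t)\sqrt t$ and $\theta(t)=\varepsilon(t)$. The paper then couples $\frac{h_t(y^{-1}g)}{h_t(g)}-\frac{\varphi_0(y^{-1}g)}{\varphi_0(g)}=\mathrm O\bigl(\tfrac1{\varepsilon(t)\sqrt t}\bigr)$ from \cite[Lemma~4.7]{APZ2023} with the refined asymptotic $\varphi_0(\exp H)=\{C_3+\mathrm O(\mu(H)^{-1})\}\bm\pi(H)e^{-\langle\rho,H\rangle}$ to get $\frac{\varphi_0(y^{-1}g)}{\varphi_0(g)}=e^{\langle\rho,A(k(g)^{-1}y)\rangle}+\mathrm O(\varepsilon(t))$. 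Your proposed route via contraction of $\Ad(\exp H)$ on $\mathfrak n$ could in principle give an exponentially small error in $\mu(g^+)$, but you would still need to turn that into the additive asymptotic for $\langle\rho,\cdot\rangle$ and $\bm\pi(\cdot)$ with uniform control over the whole region; you do not carry this out, and as you yourself note the two lemmas you would need are unavailable without the angular restriction. So the decisive idea you miss is simply to shrink $B_2(t)$ down to a tube around the $\rho$-axis so that the $p<2$ machinery becomes available again.
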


\begin{proof}
	The claim for $u_0\in \mathcal{L}^2(G//K)$ follows immediately by the fact that for bi-$K$-invariant initial data the mass function $\widetilde{M}_2(u_0)$ boils down to $\mathcal{H}u_0(0)$, and \eqref{Ind convergence}. For the two remaining classes of initial data, it is clear by the arguments of Corollary \ref{Cor Lp p12} that it suffices to prove \eqref{conv L2 variant} for $u_0\in \mathcal{C}_c(\mathbb{X})$. In this case, since $\widetilde{M}_2(u_0)$ is bounded when $\supp u_0$ is contained in some \enquote{ball} $K\{\exp B(0, \xi)\}K$, standard arguments imply that the convergence outside the $L^2$ critical region will be true. Thus it remains to prove convergence in the critical region. 
	
	To this end, we sharpen our $L^2$ critical region defined in \cref{thm: lp critical}(ii). Let $\varepsilon(t)$ be a positive function decreasing to $0$, such that $\varepsilon(t)^{10}\sqrt{t}\rightarrow +\infty$. Then the set $K\exp\{\widetilde{B}_2(t)\}K$, where
	$$ \widetilde{B}_2(t)=\left\{g^{+} \in\overline{\mathfrak{a}^{+}}:\quad \varepsilon(t)\sqrt{t} \leq|g^{+}|  \leq \frac{\sqrt{t}}{\varepsilon(t)}, \quad (\widehat{g^{+}, \rho}) \leq \varepsilon(t) \right\},
	$$
	is an $L^2$ critical region for the heat kernel on $\mathbb{X}$ in the sense of \cref{thm: lp critical}(ii). Observe that if $g^{+}\in \widetilde{B}_2(t)$, then $\mu(g^{+})\gtrsim \varepsilon(t)\sqrt{t}$. Indeed, let us write in the $\mathfrak{a}$ coordinates of \cref{Section.2 Prelim}, as in \cref{inner prod translation}, 
	$$\alpha=(\beta, \beta_{\ell}), \quad \alpha\in \Sigma^{+}, \quad {\text{and}} \quad g^{+}=(\zeta, \zeta_{\ell}), \quad g^{+}\in \widetilde{B}_2(t),$$
	where $\beta_{\ell}, \zeta_{\ell}>0$. Then, $$\zeta_{\ell}\asymp |g^{+}|, \quad \frac{|\zeta|}{\zeta_{\ell}}=\textrm{O}(\varepsilon(t)).$$
	Therefore, for any $\alpha \in \Sigma^{+}$,
	$$\frac{\langle \alpha, g^{+} \rangle}{\zeta_{\ell}}=\langle \beta, \frac{\zeta}{\zeta_{\ell}} \rangle +\beta_{\ell} \geq \frac{1}{2} \,\beta_{\ell}$$
	for $t$ large enough, thus the claim follows by $\min_{\alpha\in\Sigma^{+}}\langle \alpha, g^{+} \rangle\geq \text{const}(\Sigma) \,\zeta_{\ell} \gtrsim |g^{+}|\gtrsim \varepsilon(t)\sqrt{t} $.

	It remains therefore to prove, as already mentioned, that
	$$\|h_t\|_2^{-1}	\, \|u(t,\,\cdot\,)-\widetilde{M}_2(u_0)(\cdot)\,h_{t}\|_{L^{2}(K\{\exp \widetilde{B}_2(t)\}K)}\,
	\longrightarrow\,0
	\qquad\textnormal{as}\quad\,t\rightarrow\infty.$$
	To this end, since
	\begin{align}\label{eq: diff heat L2 planB}
		u(t,g)\,-\,\widetilde{M}_2(u_0)(g)\,h_{t}(g)\,
		=\,h_{t}(g)\,
		\int_{K\exp\{B(0, \xi)\}K}\diff{y}\,u_{0}(yK)\,
		\Big\lbrace{
			\frac{h_{t}(y^{-1}g)}{h_{t}(g)}
			-e^{\langle \rho, A(k(g)^{-1}y) \rangle} \Big\rbrace},
	\end{align} 
	we see that the crucial ingredient is asymptotics of the heat kernel quotient $h_t(y^{-1}g)/h_t(g)$, when $g\in K\{\exp \widetilde{B}_2(t)\}K$ and $|y|<\xi$. On the one hand, it is proven in \cite[Lemma 4.7]{APZ2023} that 
	\begin{equation}\label{eq: heat quot L2 plan B}
		\frac{h_t(y^{-1}g)}{h_t(g)}-\frac{\varphi_{0}(y^{-1}g)}{\varphi_{0}(g)}=\frac{1}{\varepsilon(t)\sqrt{t}}, \qquad\forall\,g\in{K\{\exp \widetilde{B}_2(t)\}K},\,\,
		\forall\,|y|<\xi,
	\end{equation}  
	where $\varepsilon(t)\sqrt{t}\rightarrow +\infty$, $\varepsilon(t)\rightarrow 0$. On the other hand, one can obtain
	\begin{align*}
		g^{-1}y\,\in\,K\{\exp \widetilde{B}_2(t)'\}K
		\qquad\forall\,g\in{K\{\exp \widetilde{B}_2(t)\}K},\,\,
		\forall\,|y|<\xi,
	\end{align*}
	where
	\begin{align*}
		\widetilde{B}_2(t)'\,
		=\,\big\lbrace{
			H\in\overline{\mathfrak{a}^{+}}: \quad 
			\frac{1}{2}{\varepsilon}(t)\sqrt{t}\le|H|\le\tfrac{2\sqrt{t}}{\varepsilon(t)}
			\,\,\,\textnormal{and}\,\,\, (\widehat{H, \rho})=\textrm{O}(\varepsilon(t))
		}\big\rbrace.
	\end{align*}
	This follows from the proof of \cref{inner prod translation} concerning the effect of a small translation to the angle with the $\rho$ axis, while for the radii the claim follows simply by the triangle inequality. Thus the asymptotics \eqref{S2 phi0 far} for the ground spherical function hold true (notice that these are not available if $g$ lies in any $L^p$ critical region for $p>2$, since those contain the origin). In fact, according to \cite[Proposition 4.2]{APZ2023} the asymptotics for $\varphi_{0}$ can be sharpened in the present region:
	\begin{align*}
		\varphi_{0}(\exp{H})\,
		=\,\Big\lbrace{C_{3}+\textnormal{O}
			\big(\tfrac{1}{\mu(H)}}\big)\Big\rbrace\,
		\bm{\pi}(H)\,e^{-\langle{\rho,H}\rangle},
	\end{align*}
	for some explicit constant $C_3>0$. Then 
	$$ \frac{\varphi_{0}(y^{-1}g)}{\varphi_{0}(g)}\,
	=\underbrace{
		\tfrac{C_{3}+\mathrm{O}\big(\tfrac{1}{\varepsilon(t)\sqrt{t}}\big)}{
			C_{3}+\mathrm{O}\big(\tfrac{1}{\varepsilon(t)\sqrt{t}}\big)}
	}_{1+\mathrm{O}\big(\tfrac{1}{\varepsilon(t)\sqrt{t}}\big)}\, \prod_{\alpha\in\Sigma_{r}^{+}} \frac{\langle \alpha, (y^{-1}g)^{+} \rangle}{\langle \alpha, g^{+} \rangle} \, e^{-\langle{\rho,(y^{-1}g)^{+}}\rangle+\langle{\rho,g^{+}}\rangle}.$$
	Thus, invoking \cref{S3 lemma distance behaviors}(iii) and (iv) for $R(t)=\varepsilon(t)\sqrt{t}$ and $\theta(t)=\varepsilon(t)$ we get
	\begin{equation}\label{eq: phi quot L2 planB} \frac{\varphi_{0}(y^{-1}g)}{\varphi_{0}(g)} =e^{ \langle\rho,A(k(g)^{-1}y)\rangle}+\textrm{O}(\varepsilon(t)). 
	\end{equation}
	From \eqref{eq: heat quot L2 plan B} and \eqref{eq: phi quot L2 planB} it follows that 
	$$\frac{h_t(y^{-1}g)}{h_t(g)}=e^{ \langle\rho,A(k(g)^{-1}y)\rangle}+\textrm{O}(\varepsilon(t)), \quad g\in K\{\exp \widetilde{B}_2(t)\}K, \quad |y|<\xi,$$
	which can be in turn used in \eqref{eq: diff heat L2 planB}, so that with standard arguments as in the case $1\leq p<2$ one gets the $L^2$ convergence. The proof is now complete.
\end{proof}	

\begin{remark} In \cite{NRS24} it is proved that if $u_0\in \mathcal{L}^p(G//K)$, $1\leq p\leq 2$, then the only $z\in \mathbb{C}$ such that 
	$$\|h_t\|_p^{-1}\,\|u_0\ast h_t-z\, h_t\|_p \rightarrow 0 \quad \text{as} \quad t\rightarrow +\infty, \quad 1\leq p \leq 2.$$
	is $z=M_p=\int_{G}\textrm{d}y\, u_0(y)\, \varphi_{\pm i(2/p-1)\rho}(y)=\mathcal{H}u_0(\pm i(2/p-1)\rho)$. However, this is not the case if one allows for functions, instead of constants. Let us elaborate: let $u_0\in \mathcal{C}_c(G//K)$ and $s>0$, and consider the function
	\begin{align*}
		M_p^{s}(u_0)(g)&:=\int_{G}\textrm{d}y\,\frac{2+d(gK, yK)^s}{1+d(gK,yK)^s} \, u_0(y)\,\varphi_{\pm i(2/p-1)\rho}(y)
		\\&= M_p +\int_{G}\textrm{d}y\,\frac{1}{1+d(gK,yK)^s}\, u_0(y)\,\varphi_{\pm i(2/p-1)\rho}(y).
	\end{align*}
	Clearly, this is well-defined and bounded for $u_0\in \mathcal{L}^p(G//K)$ (observe that $|M_p^{s}(u_0)(g)|\leq 2\int_{G} |u_0|\, \varphi_{\pm i(2/p-1)\rho}$ for all $g\in G$). Thus outside the $L^p$ critical region of \cref{thm: lp critical}, a crude triangle inequality yields 
	$$\|h_t\|_p^{-1}\,\|u_0\ast h_t-M_p^{s}(u_0)(\cdot)\, h_t\|_{L^p(G\smallsetminus K\{\exp B_p(t)\}K)} \rightarrow 0 \quad \text{as} \quad t\rightarrow +\infty, \quad 1\leq p \leq 2.$$ 
	Now let us discuss convergence inside the critical region. For $g\in K\{\exp B_p(t)\}K$ and $y$ bounded, we have $d(gK, yK)\gtrsim t^{\frac{1}{2}-\varepsilon}$ for all $1\leq p \leq 2$ if $t$ is sufficiently large. In other words, $M_p^{s}(u_0)(g)=M_p+\textrm{O}\left(\mathcal{H}|u_0|(i(2/p-1)\rho) \,t^{s(-\frac{1}{2}+\varepsilon)}\right),$ when $g\in K\{\exp B_p(t)\}K$. The desired $L^p$ convergence result in $K\{\exp B_p(t)\}K$ now follows from \eqref{Ind convergence} and the triangle inequality. Altogether, we have shown that 
	$$\|h_t\|_p^{-1}\,\|u_0\ast h_t-M_p^{s}(u_0)(\cdot)\, h_t\|_p \rightarrow 0 \quad \text{as} \quad t\rightarrow +\infty, \quad 1\leq p \leq 2, \quad \forall s>0.$$

\end{remark}

\section{$L^p$ convergence, $2\leq p\leq \infty$} \label{Section.5 lp 1 to infty}

Our aim in this section is to prove the $L^p$ asymptotics \eqref{S1 Main thm 2 convergence} for $p\in [2,\infty]$. To this end, we again turn most of our attention, as in the $1\leq p<2$ case, in the $L^p$ critical region as defined in \cref{thm: lp critical}. We aim again to use heat kernel asymptotics; these are still available since $\frac{|x|}{t}\rightarrow 0$ when $x\in K\{\exp B_p(t)\}K$, according to \eqref{S2 heat kernel critical region}; however, $x^{+}$ can be near the walls for $p>2$. For the case $p=2$, we consider throughout this whole section that the $L^2$ concentration region is
\begin{align*}
	B_2(t)\,
	=\,\big\lbrace{
		H\in\overline{\mathfrak{a}^{+}}: \quad
		\varepsilon(t)\sqrt{t}\le|H|\le\tfrac{\sqrt{t}}{\varepsilon(t)}
		\,\,\,\textnormal{and}\,\,\,
		\mu(H)\ge\varepsilon(t)\sqrt{t}
	}\big\rbrace,
\end{align*}
where $\varepsilon(t)\sqrt{t}\rightarrow +\infty$, $\varepsilon(t)\rightarrow 0$.

The following lemma plays a key role in the proof of \cref{S1 Main thm 2} for $p\in [2, \infty]$.
\begin{lemma}\label{S4 Lemma ratios difference}
	Let $2\leq p\leq \infty$ and let $\delta_p(t)$ be the positive function given by 
	$\delta_2(t)=\frac{1}{\varepsilon(t)\sqrt{t}}$ if $p=2$, where $\varepsilon(t)\sqrt{t}\rightarrow +\infty$, $\varepsilon(t)\rightarrow 0$,  and given by $\delta_p(t)=\frac{R(t)}{\sqrt{t}}$ if $p\in(2,+\infty]$, where $\frac{R(t)}{\log t}\rightarrow +\infty$, $\frac{R(t)}{\sqrt{t}}\rightarrow 0$.
	Then, for  bounded $y\in{G}$ and for all $g$ in the $L^p$ critical region
	$K\{\exp B_p(t)\}K$, the following asymptotic behavior holds:
	\begin{align*}
		\frac{h_{t}(y^{-1}g)}{h_{t}(g)}
		-\frac{\varphi_{0}(y^{-1}g)}{\varphi_{0}(g)}\,
		=\,\mathrm{O}(\delta_p(t))
		\qquad\textnormal{as}\,\,\,t\longrightarrow\infty.
	\end{align*}
\end{lemma}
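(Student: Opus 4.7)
My plan is to pass to the heat kernel asymptotics \eqref{S2 heat kernel critical region} at both $g$ and $y^{-1}g$ and reduce the quotient $h_t(y^{-1}g)/h_t(g)$ to $\varphi_{0}(y^{-1}g)/\varphi_{0}(g)$ up to multiplicative errors of size $\textrm{O}(\delta_p(t))$. I first need to verify that the asymptotic is available on the entire $L^p$ critical region for $p\in[2,\infty]$. For $p\in(2,\infty]$, the assumption $|g^{+}|\le R(t)=\textrm{o}(\sqrt{t})$ already gives $|g^{+}|/t\to 0$; for $p=2$ the refined box $B_2(t)$ forces $\mu(g^{+})\ge \varepsilon(t)\sqrt{t}\to\infty$, putting $g^{+}$ away from the walls. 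A bounded translation by $y$ preserves both properties---for the wall condition the argument is of the same type as the one used in \cref{inner prod translation} and \cref{prop: L2 variant}---so the asymptotic applies at $y^{-1}g$ as well.

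After cancelling the common prefactor $C_{1} t^{-\nu/2} e^{-|\rho|^{2} t}$, the ratio will be written as
\begin{equation*}
\frac{h_t(y^{-1}g)}{h_t(g)}\,=\,\frac{\mathbf{b}(-i(y^{-1}g)^{+}/(2t))^{-1}}{\mathbf{b}(-ig^{+}/(2t))^{-1}}\cdot \frac{\varphi_{0}(y^{-1}g)}{\varphi_{0}(g)}\cdot e^{(|g^{+}|^{2}-|(y^{-1}g)^{+}|^{2})/(4t)}\cdot (1+\textrm{o}(1)),
\end{equation*}
where the $\textrm{o}(1)$ absorbs the implicit error from \eqref{S2 heat kernel critical region}. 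I will handle the three multiplicative factors separately. The $\mathbf{b}$-quotient is $1+\textrm{O}(1/t)$ by \cref{lemma bfunc ratio} since $|g^{+}|=\textrm{O}(t)$. For the Gaussian factor, I use the factorization $|g^{+}|^{2}-|(y^{-1}g)^{+}|^{2}=(|g^{+}|-|(y^{-1}g)^{+}|)(|g^{+}|+|(y^{-1}g)^{+}|)$ together with $\big||g^{+}|-|(y^{-1}g)^{+}|\big|\le |y|$ from \eqref{S2 Distance} and $|g^{+}|+|(y^{-1}g)^{+}|\lesssim |g^{+}|$; the exponent is then $\textrm{O}(|g^{+}|/t)$, which is $\textrm{O}(R(t)/t)\subseteq \textrm{O}(R(t)/\sqrt{t})=\textrm{O}(\delta_p(t))$ for $p>2$ and $\textrm{O}(1/(\varepsilon(t)\sqrt{t}))=\textrm{O}(\delta_2(t))$ for $p=2$, so the Gaussian factor equals $1+\textrm{O}(\delta_p(t))$.

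Assembling the pieces gives $h_t(y^{-1}g)/h_t(g)=(\varphi_{0}(y^{-1}g)/\varphi_{0}(g))(1+\textrm{O}(\delta_p(t)))$, so the claimed difference equals $(\varphi_{0}(y^{-1}g)/\varphi_{0}(g))\cdot \textrm{O}(\delta_p(t))$. To conclude I invoke the Harnack-type inequality \eqref{Harnack} to bound $\varphi_{0}(y^{-1}g)\le e^{\langle\rho,y^{+}\rangle}\varphi_{0}(g)\le e^{|\rho||y|}\varphi_{0}(g)$; this is a uniform constant because $y$ is bounded, and hence absorbs the $\varphi_{0}$-ratio into $\textrm{O}(\delta_p(t))$.

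The hard part will be upgrading the qualitative ``$\sim$'' in \eqref{S2 heat kernel critical region} to a quantitative $(1+\textrm{O}(\delta_p(t)))$ uniformly in $g$ across the critical region; this requires inspecting the implicit error terms in the Anker--Ji--Ostellari proof rather than relying only on the leading-order statement. A subsidiary technical step, specific to $p=2$, is verifying that the bounded translation keeps $(y^{-1}g)^{+}$ in the region $\mu(\cdot)\gtrsim \varepsilon(t)\sqrt{t}$ where the sharp asymptotic is valid, which is a coordinate-level estimate of the same nature as \cref{inner prod translation}.
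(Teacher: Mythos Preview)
Your strategy is the same as the paper's: pass to the heat-kernel asymptotics at $g$ and $y^{-1}g$, reduce the quotient to $\varphi_{0}(y^{-1}g)/\varphi_{0}(g)$ times a Gaussian factor $1+\mathrm{O}(\delta_p(t))$, and finish with the Harnack bound \eqref{Harnack}. You have also correctly isolated the only real issue---the qualitative ``$\sim$'' in \eqref{S2 heat kernel critical region} does not by itself give the rate $\mathrm{O}(\delta_p(t))$.

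The paper resolves that gap not by reworking the Anker--Ji--Ostellari proof but by citing two precise sources. For $p=2$ it simply invokes \cite[Lemma~4.7]{APZ2023}, which is exactly the claimed estimate. For $p\in(2,\infty]$ it appeals to \cite[p.\,1060, Step~5]{Ank1991}, which already gives the quantitative form
\[
h_t(g)=\Big(C_0+\mathrm{O}\big(\tfrac{R(t)}{\sqrt{t}}\big)\Big)\,t^{-\frac{\ell}{2}-|\Sigma_r^{+}|}\,e^{-|\rho|^2 t}\,\varphi_0(g)\,e^{-|g^{+}|^{2}/(4t)}
\]
for $|g^{+}|\le R(t)=\mathrm{o}(\sqrt{t})$. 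In this packaging the $\mathbf{b}$-factor is already absorbed into the constant-plus-error (since $|g^{+}|/t\to 0$), so your detour through \cref{lemma bfunc ratio} is unnecessary in this range; the remaining Gaussian and Harnack steps are identical to yours. Also note that for $p>2$ the critical region contains the origin and points near walls, so the translation argument about preserving $\mu(\cdot)\gtrsim\varepsilon(t)\sqrt t$ is relevant only for $p=2$.
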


\begin{proof} For $p=2$, the claim has been established in \cite[Lemma 4.7]{APZ2023}. For $p\in (2,+\infty]$, a careful inspection of \cite[p.1060, Step 5]{Ank1991} reveals that one can show that
	$$h_t(g)=\left( C_0+\textrm{O}\left(\frac{R(t)}{\sqrt{t}}\right)\right)\,  t^{-\frac{\ell}{2}-|\Sigma^{++}|}\, e^{-|\rho|^2 t}\, \varphi_0(g)\, e^{-\frac{|g^{+}|^2}{4t}}$$
	(the constant $C_0$ can be computed explicitly) when $|g^{+}|\leq R(t)=\textrm{o}(\sqrt{t})$.  Thus since $y$ is bounded, we also have $|(y^{-1}g)^{+}|=\textrm{O}(R(t))$ by the triangle inequality. Therefore we can write
	\begin{align*}
		\frac{h_{t}(y^{-1}g)}{h_{t}(g)}\,
		=\,\underbrace{
			\tfrac{C_{0}+\textrm{O}\left(\frac{R(t)}{\sqrt{t}}\right) \big)}{
				C_{0}+\textrm{O}\left(\frac{R(t)}{\sqrt{t}}\right)}
		}_{1+\mathrm{O}\big(  \frac{R(t)}{\sqrt{t}} \big)}\,
		\,\frac{\varphi_{0}(y^{-1}g)}{\varphi_0(g)}\,
		e^{-\tfrac{|(y^{-1}g)^{+}|^{2}}{4t}+\tfrac{|g^{+}|^{2}}{4t}}
	\end{align*}
	and
	\begin{align*}
		\exp\{-\tfrac{|(y^{-1}g)^{+}|^{2}}{4t}+\tfrac{|g^{+}|^{2}}{4t}\}=\exp\{ \underbrace{\frac{|(y^{-1}g)^{+}|+|g^{+}|}{4t}}_{=\,\textrm{O}\left(\frac{R(t)}{t}\right)}\,(\underbrace{|g^{+}|-|(y^{-1}g)^{+}|}_{=\,\textrm{O}(1)})\}=1+\textrm{O}\left(\frac{R(t)}{t}\right).
	\end{align*}
	Hence, since by \eqref{Harnack} the quotient $\varphi_{0}(y^{-1}g)/\varphi_0(g)$ is bounded when $y$ is bounded, we conclude
	\begin{align*}
		\frac{h_{t}(y^{-1}g)}{h_{t}(g)}
		-\frac{\varphi_{0}(y^{-1}g)}{\varphi_{0}(g)}\,
		=\,\textrm{O}\left(\frac{R(t)}{\sqrt{t}}\right).
	\end{align*}
\end{proof}

We are now ready to prove the $L^p$ convergence of \cref{S1 Main thm 2}, $2\leq p \leq \infty$, for continuous and compactly supported initial data.
\begin{proof}
	Assume that $\supp u_0\subseteq K\{\exp B(0,\xi)\}K$, for some $\xi>0$. We aim to study the difference
	\begin{align}\label{eq: diff Lp2infty Cc}
		u(t,g)\,-\,M_p(u_0)(g)\,h_{t}(g)\,
		&=(u_{0}*h_{t})(g)
		\,-\,
		{h}_{t}(g)\,
		\frac{(u_{0}*\varphi_{0})(g)}{\varphi_{0}(g)}
		\notag\\[5pt]
		&=\,h_{t}(g)\,
		\int_{G}\diff{y}\,u_{0}(yK)\,
		\Big\lbrace{
			\frac{h_{t}(y^{-1}g)}{h_{t}(g)}
			-\frac{\varphi_{0}(y^{-1}g)}{\varphi_{0}(g)}
		}\Big\rbrace. 
	\end{align}
	According to the previous lemma, we have
	\begin{align}\label{eq: diff Lp2infty Cc 2}
		\frac{h_{t}(y^{-1}g)}{h_{t}(g)}
		-\frac{\varphi_{0}(y^{-1}g)}{\varphi_{0}(g)}\,
		=\,\mathrm{O}(\delta_p(t))
		\qquad\forall\,g\in K\{\exp B_p(t)\}K, \, \, 
		\forall\,y\in \textrm{supp}u_0.
	\end{align}
	Then, for $2\leq p <\infty$ the $L^p$ integral of
	${u}(t,\cdot)-M_p(u_0)(\cdot)\,{h}_{t}$	over the critical region satisfies
	\begin{align*}
		\left( \int_{K\left\{\exp B_p(t)\right\} K}
		\textrm{d}{g}\,    
		|u(t,g)\,-\,{M}_p(u_0)(g)\,{h}_{t}(g)|^p\, \right)^{1/p}
		\lesssim\,
		\delta_p(t)\,
		\underbrace{\vphantom{\Big|}
			\left(	\int_{G}\textrm{d}{g}\,{h}_{t}(g)^p \right)^{1/p}
		}_{\|h_t\|_p}\,\|u_0\|_{1},
	\end{align*}
	hence it is $\textrm{o}(\|h_t\|_p)$; meanwhile, for $p=\infty$, we get from \eqref{eq: diff Lp2infty Cc} and \eqref{eq: diff Lp2infty Cc 2} that 
	$$\|h_t\|_{\infty}^{-1}\sup_{|g|\leq R(t)}|u(t, \,g)\,-\,M_{\infty}(u_0)(g)\, h_t(g)|\lesssim\textrm{O}(\delta_{\infty}(t))\, \|u_0\|_1.$$
	
	Let us now work outside the critical region. As in the case of $1\leq p <2$, it will become evident that now the expression of the mass function $M_p(u_0)$ plays no particular role; instead, its important feature will simply be that it is bounded when $u_0\in \mathcal{C}_c(\mathbb{X})$.  
	
	We deal first with the case $p=\infty$. We have
	$$|u(t,g)\, -\, M_{\infty}(u_0)(g)\, h_t(g)|\leq \sup_{|g|> R(t)}|h_t(g)|\|u_0\|_1+\sup_{|g|> R(t)}|M_{\infty}(u_0)(g)||h_t(g)|.$$  Using the estimates \eqref{S2 global estimate phi0} for the heat kernel and \eqref{S2 global estimate phi0} for the ground spherical function, we deduce that there are $N_1, N_2>0$ such that for $|g|> R(t)$, 
	\begin{align*}
		t^{\frac{\nu}{2}}\, e^{|\rho|^2 t}\,h_t(g)&\lesssim t^{N_1}\,(1+|g|)^{N_2}\,e^{-\langle \rho, g^{+}\rangle}\lesssim t^{N_1}\,(1+|g|)^{N_2}\, e^{-\rho_{\textrm{min}}|g|} \\
		&\lesssim t^{-N}\qquad \forall N>0,
	\end{align*}
	since $\frac{R(t)}{\log t}\rightarrow +\infty$. It follows  by the boundedness of $M_{\infty}(u_0)$ that 
	$$\|h_t\|_{\infty}^{-1}\sup_{|g|>R(t)}|u(t,g)-M_{\infty}(u_0)(g)\, h_t(g)|\lesssim t^{-N} \qquad \forall N>0.$$
	
	Finally we treat the case $2\leq p <\infty$. Thus we have to check that the $L^p$ integral, $p\geq 2$
	\begin{align*}
		\left( \int_{G\smallsetminus K\left\{\exp B_p(t)\right\} K}\textrm{d}{g}\,
		|u(t,g)\,-M_p(u_0)(g)\,{h}_{t}(g)|^p\, \right)^{1/p}
		&\le\,
		\left( \int_{G\smallsetminus K\left\{\exp B_p(t)\right\} K}\textrm{d}{g}\,
		|u(t,g)|^p \right)^{1/p}\\[5pt]
		&+\,
		\left( \int_{G\smallsetminus K\left\{\exp B_p(t)\right\} K}\textrm{d}{g}\,    
		|M_p(u_0)(g)|^p \,h_{t}(g)^p  \right)^{1/p}
	\end{align*}
	is also $\textrm{o}(\|h_t\|_p)$. On the one hand, for the second summand above, we know that ${M_{p}(u_0)}$ is bounded
	and that the heat kernel ${h}_{t}$ asymptotically concentrates in the $L^p$ sense in the region
	$K\left\{\exp B_p(t)\right\} K$, hence
	\begin{align*}
		\|h_t\|_{p}^{-1}\,\int_{G\smallsetminus K\left\{\exp B_p(t)\right\} K}\textrm{d}{g}\,
		|{M}_p(u_0)(g)|^p \, {h}_{t}(g)^p\,
		\longrightarrow\,0
	\end{align*}
	as $t\rightarrow\infty$. On the other hand, we distinguish cases in $p$ for the second summand. If $p=2$, then notice that for all
	$y\in\supp{u_{0}}$ and for all $g\in{G}$ such that
	$g^{+}\notin B_2(t)$ one has
	\begin{align}\label{S4 Omega''}
		(y^{-1}g)^{+}\,\notin\,{B_2(t)}''
		=\,
		\big\lbrace{
			H\in\overline{\mathfrak{a}^{+}}\,|\,
			\varepsilon''(t)\sqrt{t}\le|H|\le\tfrac{\sqrt{t}}{\varepsilon''(t)}
			\,\,\,\textnormal{and}\,\,\,
			\mu(H)\ge\varepsilon''(t)\sqrt{t}
		}\big\rbrace
	\end{align}
	where $\varepsilon''(t)=2\varepsilon(t)$ (see the proof of \cite[Lemma 4.7]{APZ2023}). Similarly, if $p>2$, then observe that for all
	$y\in\supp{u_{0}}$ and for all $g\in B_p(t)$ then 
	\begin{align*}
		(y^{-1}g)^{+}\,\notin\,{B_p(t)}''
		=\,
		\left\{
		H\in\overline{\mathfrak{a}^{+}}: \quad |H|\leq \frac{R(t)}{2}
		\right\}, \qquad p>2.
	\end{align*}
	Hence an application of the Minkowski inequality for integrals, as in the case $1\leq p<2$, yields
	\begin{align*}
		\|u(t, \, \cdot \,)\|_{L^p(G\smallsetminus K\left\{\exp B_p(t)\right\}K)}=\textrm{o}(\|h_t\|_p).
	\end{align*}
	This concludes the proof of the heat asymptotics in $L^{p}$, $2\leq  p\leq \infty$ and for initial data $u_{0}\in\mathcal{C}_{c}(\mathbb{X})$.
\end{proof}

\subsection{$L^p$ heat asymptotics for other initial data, $2\leq p \leq \infty$}\label{Subsect other data 1 to infty}

So far we have obtained above the long-time asymptotic convergence in $L^{p}$
($2\leq p \leq \infty$) for the heat equation 
with compactly supported initial data, in the sense of \eqref{S1 Main thm 2 convergence}.
In this section we discuss convergence for further classes of initial data.

\begin{corollary}
	The $L^p$ asymptotic convergence \eqref{S1 Main thm 2 convergence}, $2\leq p\leq \infty$, still holds with initial data
	$u_{0}\in \mathcal{L}^p(G//K)$.
\end{corollary}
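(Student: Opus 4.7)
The plan is essentially to recognize that, under bi-$K$-invariance, the target statement reduces to the bi-$K$-invariant convergence of \cite{NRS24}. Concretely, for bi-$K$-invariant $u_0$ the calculation \eqref{S4 radial mass} in \cref{remark bi-K} yields
\begin{align*}
M_p(u_0)(g)\,=\,\frac{(u_0\ast\varphi_0)(g)}{\varphi_0(g)}\,=\,\int_G u_0\,\varphi_0\,=\,\mathcal{H}u_0(0),
\end{align*}
so the desired convergence \eqref{S1 Main thm 2 convergence} collapses to the bi-$K$-invariant statement \eqref{Ind convergence}, and the Naik--Ray--Sarkar theorem cited after \eqref{Ind convergence} applies verbatim.

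A self-contained argument proceeds by density, exactly in the spirit of \cref{Cor Lp p12}. Since $\mathcal{C}_c(G//K)$ is dense in $\mathcal{L}^p(G//K)$ for the seminorm $\|f\|_{\varphi_0}:=\int_G|f|\,\varphi_0$, for any $\varepsilon>0$ one can pick $U_0\in\mathcal{C}_c(G//K)$ with $\|u_0-U_0\|_{\varphi_0}<\varepsilon/3$ and split
\begin{align*}
u_0\ast h_t-\mathcal{H}u_0(0)\,h_t\,&=\,(u_0-U_0)\ast h_t+\bigl(U_0\ast h_t-\mathcal{H}U_0(0)\,h_t\bigr)\\
&\quad+\bigl(\mathcal{H}U_0(0)-\mathcal{H}u_0(0)\bigr)h_t.
\end{align*}
The middle summand is $o(\|h_t\|_p)$ by the $\mathcal{C}_c$ case already established in \cref{Section.5 lp 1 to infty}, while the last is bounded by $|\mathcal{H}(u_0-U_0)(0)|\,\|h_t\|_p\leq\|u_0-U_0\|_{\varphi_0}\,\|h_t\|_p<(\varepsilon/3)\|h_t\|_p$.

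The remaining first summand is controlled via the Herz majorant principle for bi-$K$-invariant convolutions, in the form
\begin{align*}
\|f\ast h_t\|_p\,\lesssim\,\|f\|_{\varphi_0}\,\|h_t\|_p,\qquad f\in\mathcal{L}^p(G//K),\,\,2\leq p\leq\infty,
\end{align*}
as used in \cite[Sect.~2]{APZ2023} and \cite{NRS24}; applied with $f=u_0-U_0$ this contributes at most $(\varepsilon/3)\|h_t\|_p$, and the triangle inequality finishes the argument. The main obstacle is arranging the Herz bound with precisely the weight $\varphi_0$ appearing in the definition of $\mathcal{L}^p(G//K)$ for $p\geq 2$, rather than the heavier spherical weight $\varphi_{i(2/p'-1)\rho}$ produced by the naive Kunze--Stein--Herz statement; this gap is closed using the heat-kernel asymptotics of \cref{Section.2 Prelim}, which is where the specific structure of $h_t$ is really used.
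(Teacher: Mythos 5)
Your first paragraph is exactly what the paper does: via \cref{remark bi-K} the mass function collapses to the constant $\mathcal{H}u_0(0)$, so \eqref{S1 Main thm 2 convergence} is literally \eqref{Ind convergence}, and the paper likewise simply cites \cite{APZ2023,NRS24} for the density step. Where you go beyond the paper is in attempting a self-contained density argument, and there you correctly flag a real subtlety that the paper's one-liner hides: the off-the-shelf Herz/Kunze--Stein bound for a bi-$K$-invariant convolutor on $L^p$, $p>2$, carries the weight $\varphi_{i(2/p'-1)\rho}$, which strictly dominates $\varphi_0$, whereas $\mathcal{L}^p(G//K)$ only controls the $\varphi_0$-weighted $L^1$ norm. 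You are also right that the specific structure of $h_t$ closes the gap; concretely, the inversion formula \eqref{S2 heat kernel inv}, the bound $|\varphi_\lambda|\leq\varphi_0$ for $\lambda\in\mathfrak{a}$, and the positivity of $e^{-t(|\lambda|^2+|\rho|^2)}$ together yield $|(f*h_t)(x)|\leq\bigl(\int_G|f|\,\varphi_0\bigr)\varphi_0(x)\,h_t(e)$, and since $\|\varphi_0\|_p<\infty$ and $h_t(e)\asymp\|h_t\|_p$ for $p>2$ (Plancherel handles $p=2$ directly), this gives the needed $\|f*h_t\|_p\lesssim\bigl(\int_G|f|\,\varphi_0\bigr)\|h_t\|_p$. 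So the proposal is correct and follows the paper's route; the one computation you anticipated but did not carry out is this last inequality.
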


\begin{proof}
	The proof follows from the fact that the mass function boils down to constant, namely the spherical Fourier transform of $u_0$ evaluated at $0$ (see \cref{remark bi-K}) and a density argument, see for instance \cite{APZ2023, NRS24}.
\end{proof}

\begin{corollary}\label{Cor Lp p2infty}
	The $L^p$ asymptotic convergence \eqref{S1 Main thm 2 convergence}, $2\leq p\leq \infty$, still holds without assuming bi-$K$-invariance for $u_0$ but under the assumption $u_{0}\in L_{w_p}(\mathbb{X})$, that is, 
	\begin{align}\label{S4 other data assumption}
		\int_{G}\diff{g}\,|u_{0}(gK)|e^{\langle{\rho,g^{+}}\rangle}\,
		<\,\infty.
	\end{align}
\end{corollary}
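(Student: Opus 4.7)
The strategy mirrors the proof of \cref{Cor Lp p12}, with the main adjustments being (i) using the weight $e^{\langle\rho,g^{+}\rangle}$ appropriate to $p\geq 2$, and (ii) using the local Harnack inequality \eqref{Harnack} to control the difference of the mass functions $M_p(u_0) - M_p(U_0)$, since the mass is now expressed in terms of $\varphi_0$ rather than in terms of $e^{\frac{2}{p}\langle\rho, A(\cdot)\rangle}$.

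The plan is as follows. First, recall from \cref{remark: mass bdd weight} that the assumption \eqref{S4 other data assumption} guarantees that $M_p(u_0)(\cdot)$ is bounded on $\mathbb{X}$; this is what is needed to handle the integration outside the critical region, exactly as in the previous case. Next, given $\varepsilon>0$, I approximate $u_0$ by $U_0\in\mathcal{C}_c(\mathbb{X})$ such that
\begin{align*}
\int_G \diff{g}\,|u_0(gK)-U_0(gK)|\,e^{\langle\rho,g^{+}\rangle}<\tfrac{\varepsilon}{3}.
\end{align*}
Such an approximation exists because $u_0(gK)\,e^{\langle\rho,g^{+}\rangle}\in L^1(\mathbb{X})$. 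Since $\rho\in\overline{\mathfrak{a}^{+}}$, we have $\langle\rho,g^{+}\rangle\geq 0$, so in particular $\|u_0-U_0\|_1<\varepsilon/3$. Let $U(t,\cdot)=U_0\ast h_t$ be the corresponding heat solution; by the already-established compactly supported case, there exists $T>0$ such that
\begin{align*}
\|U(t,\,\cdot\,)-M_p(U_0)(\cdot)\,h_t\|_{L^p(\mathbb{X})}<\tfrac{\varepsilon}{3}\,\|h_t\|_p\qquad \forall\,t>T.
\end{align*}

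The key estimate controls the difference of the mass functions. Invoking the local Harnack inequality \eqref{Harnack}, for every $g\in G$,
\begin{align*}
|M_p(u_0)(g)-M_p(U_0)(g)|
\,=\,\biggl|\frac{((u_0-U_0)\ast\varphi_0)(g)}{\varphi_0(g)}\biggr|
\,\le\,\int_G\diff{y}\,|u_0(yK)-U_0(yK)|\,e^{\langle\rho,y^{+}\rangle}
\,<\,\tfrac{\varepsilon}{3}.
\end{align*}
This is the only place where the particular form of $M_p$ for $p\geq 2$ matters, and the Harnack bound is precisely the reason the weight $e^{\langle\rho,g^{+}\rangle}$ is the natural one in this range.

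The conclusion then follows from the triangle inequality, bounding Young's inequality for the first piece:
\begin{align*}
\|u(t,\,\cdot\,)-M_p(u_0)(\cdot)\,h_t\|_p
&\le\underbrace{\|u(t,\,\cdot\,)-U(t,\,\cdot\,)\|_p}_{\le\,\|u_0-U_0\|_1\,\|h_t\|_p}
+\underbrace{\|(M_p(u_0)(\cdot)-M_p(U_0)(\cdot))\,h_t\|_p}_{\le\,(\varepsilon/3)\,\|h_t\|_p}\\[2pt]
&\quad+\underbrace{\|U(t,\,\cdot\,)-M_p(U_0)(\cdot)\,h_t\|_p}_{<\,(\varepsilon/3)\,\|h_t\|_p}
\,<\,\varepsilon\,\|h_t\|_p,
\end{align*}
valid for $t>T$. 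The only place I anticipate needing care is the mass-difference step, because for $p\geq 2$ the mass is no longer a weighted integral of $u_0$ but rather the ratio $(u_0\ast\varphi_0)/\varphi_0$, so one cannot directly bring the absolute value inside. This is resolved, however, precisely by the Harnack bound $\varphi_0(y^{-1}g)\le e^{\langle\rho,y^{+}\rangle}\varphi_0(g)$, which converts the ratio bound into an integral against the weight $e^{\langle\rho,y^{+}\rangle}$ on which our density approximation is designed.
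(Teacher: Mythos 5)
Your proof is correct and is exactly the argument the paper has in mind: the paper's proof of this corollary simply states that it ``runs as that of Corollary~\ref{Cor Lp p12}'' and omits the details, and you have supplied precisely those details, with the sole new ingredient being the Harnack bound $\varphi_0(y^{-1}g)\le e^{\langle\rho,y^{+}\rangle}\varphi_0(g)$ from \eqref{Harnack} to control $|M_p(u_0)(g)-M_p(U_0)(g)|$ for $p\ge 2$, matching the mechanism already used in Remark~\ref{remark: mass bdd weight} to establish boundedness of $M_p(u_0)$ in this range.
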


\begin{proof}
	The proof runs as that of Corollary \ref{Cor Lp p12}, since under the assumption \eqref{S4 other data assumption} the mass function is bounded, see \cref{remark: mass bdd weight}. We omit the details.
\end{proof}

\section{Appendix}\label{Appendix}

In this section, we focus only on real hyperbolic space $\mathbb{H}^n=\mathbb{H}^n(\mathbb{R})$. We write down explicitly the mass functions $M_p$, as well as the weighted $L^1$ spaces, hoping that it will be illuminating to those who are more familiar to hyperbolic space than arbitrary rank symmetric spaces. Let us also point out that in this Appendix we will not always write down the error terms of asymptotics, for reasons of brevity. We first introduce some notation. 

We use the model of hyperbolic space given by the upper sheet of hyperboloid
\[
\mathbb{H}^n=\{ (x_0,x_1, ... ,x_n)\in\mathbb{R}^{n+1},\;x_0^2-x_1^2-...-x_n^2=1,\;x_0>0\}.\]
Using polar coordinates, we write 
\[
x_0=\cosh r\quad\text{and}\quad(x_1, ..., x_n)=(\sinh r)\,\omega\quad
\text{with}\quad r\ge 0\,\; \omega\in\mathbb{S}^{n-1}.
\]
The distance of a point $\Omega=\Omega(r,\omega)=\bigl(\cosh r,(\sinh r)\,\omega\bigr)\in\mathbb{H}^n$
to the origin $O=(1,0, ... ,0)$ of the hyperboloid  is
\[
d(\Omega,O)=r.
\]
More generally, the distance between two arbitrary points
$\Omega=\Omega(r,\omega)$ and $\Omega'=\Omega(s,\omega')$ satisfies
\begin{equation}\label{eq: dist hyperbolic}
	\cosh d(\Omega,\Omega')=(\cosh r)(\cosh s)-(\sinh r)(\sinh s)\,\omega\cdot\omega'.
\end{equation}
Denote the geodesic ball around the origin of radius $\xi>0$ by $B(O, \xi).$ The volume element is given by
\begin{equation*}
	\int_{\mathbb{H}^n}d\Omega\, f(\Omega)
	=\int_0^{\infty}dr\int_{\mathbb{S}^{n-1}}d\omega\,u_0(r,\omega)\,(\sinh r)^{n-1}.
\end{equation*}
Recall that for \textit{radial} $\mathcal{C}_c(\mathbb{H}^n)$ functions, the spherical Fourier transform is defined by 
$$\mathcal{H}u_0(\lambda)=\int_{\mathbb{H}^n}d\Omega \,u_0(\Omega)\,\varphi_{\lambda}(\Omega)=C\int_{0}^{\infty} dr\,u_0(r)\, \varphi_{\lambda}(r)\, (\sinh r)^{n-1}, \quad \lambda \in \mathbb{C},$$
where the elementary spherical function of index $\lambda$ is radial in space, even in $\lambda$, and is given by 
\begin{align}\label{eq: spherical hyp}
	\varphi_\lambda(r) & =\int_{\mathbb{S}^{N-1}} d \omega\,(\cosh r-\sinh r(\vartheta \cdot \omega))^{-i \lambda-\frac{n-1}{2}}  \notag \\
	& =\frac{\Gamma(\frac{n}{2})}{\sqrt{\pi}\Gamma(\frac{n-1}{2})} \int_0^{\pi}d \theta\, (\cosh r\pm\sinh r \cos \theta)^{\mp i \lambda-\frac{n-1}{2}}(\sin \theta)^{N-2},
\end{align}
(see for instance \cite[Proposition 3.1.4]{GaVa1988}, \cite[Ch. IV, Theorem 4.3]{Hel2000}, or \cite[p.40]{Koo84}). Notice that it holds $\varphi_{0}(r)\asymp (1+r)\,e^{-\frac{n-1}{2}r}$ globally and $\varphi_{0}(r)\sim r\,e^{-\frac{n-1}{2}r}$ as $r\rightarrow +\infty$.

The solution to the heat equation with initial data $u_0$ is given by
\begin{equation*}
	u(t,\Omega)=\int_{\mathbb{H}^n} d\Omega' \,h_t(d(\Omega,\Omega'))\,u_0(\Omega'),
\end{equation*}
where $h_t$ denotes the heat kernel on $\mathbb{H}^n$; it is a radial function, given as the inverse spherical transform of $e^{-t(\lambda^2+\frac{(n-1)^2}{4})}$.
According to \cite[p.1080]{AnJi1999}, one has the following complete asymptotics:
\begin{equation}\label{eq: heat hyperbolic}
	h_t(r)\sim\frac{1}{2\pi^{\frac{n}{2}}} \,\gamma\left(\frac{r}{2t}\right) \, t^{-\frac{3}{2}} \, r \, e^{-\frac{(n-1)^2}{4}t-\frac{n-1}{2}r-\frac{r^2}{4t}},
\end{equation}
as $t\rightarrow +\infty$ and $r\rightarrow +\infty$, where $\gamma(s)=\frac{\Gamma(s+1/2)\Gamma(s/2+(n-1)/4)}{\Gamma(s+1)\Gamma(s/2+1/4)}$.

The $L^p$ critical regions now boil down to 
$$\textbf{B}_p(t)=\left\{ \Omega \in \mathbb{H}^n: \quad \left|d(\Omega, O)-\left(\frac{2}{p}-1\right)(n-1)t\right|\leq r(t)\right\}, \quad 1\leq p<2,$$
where $r(t)$ is as in \cref{thm: lp critical}. Likewise, 
$$\textbf{B}_2(t)=\left\{\Omega \in \mathbb{H}^n: \quad r_1(t) \leq d(\Omega, O)  \leq r_2(t) \right\},$$
and 
$$\textbf{B}_p(t)=\left\{\Omega \in \mathbb{H}^n:  \quad d(\Omega, O)  \leq R(t) \right\}, \quad 2<p\leq \infty,$$
where again we use the notation of \cref{thm: lp critical} for the quantities $r_1(t), r_2(t), R(t)$.

As clear from the proof in the higher rank case, the essential ingredient is the asymptotics of the quotient $h_t(d(\Omega, \Omega'))/h_t(d(\Omega, O))$, when $\Omega \in \textbf{B}_p(t)$ and $d(\Omega',O)$ is bounded. Even further, the exponential terms will be the ones giving the crucial information, since it can be shown that $\frac{\gamma(d(\Omega, \Omega')/2t)}{\gamma(d(\Omega, O)/2t)}\sim 1$ when $d(\Omega, \Omega'), d(\Omega, O)=\textrm{O}(t)$ (see \cref{lemma bfunc ratio}).

Let us write
$\Omega=\Omega(r,\omega)\in \textbf{B}_p(t)$ and $\Omega'=\Omega(s,\omega')\in B(O, \xi)$. Then clearly
$$d(\Omega, \Omega')-d(\Omega, O)=O(1), \quad 1\leq p\leq \infty.$$
In the case $1\leq p\leq 2$, we can even be more precise, since when $t\rightarrow +\infty$ we also have $r\rightarrow +\infty$. More specifically, using the distance formula \eqref{eq: dist hyperbolic} one can compute that 
\begin{align}
	d(\Omega, \Omega')-d(\Omega, O)&=\cosh^{-1}(\cosh r\,\cosh s-\sinh r\,\sinh s\,\omega\cdot\omega')-r \notag \\
	&\sim \log(\cosh s-\sinh s \, \omega\cdot \omega') \quad \text{as} \quad t\rightarrow +\infty, \quad 1\leq p \leq 2 \label{eq: Busemann}.
\end{align}
On the other hand, we can write
\begin{align}\label{eq: diff squares}
	\frac{d(\Omega, \Omega')^2-d(\Omega, O)^2}{4t}&=(d(\Omega, \Omega')-d(\Omega, O))\,\frac{d(\Omega, \Omega')+d(\Omega, O)}{4t},
\end{align}
where the quotient $(d(\Omega, \Omega')+d(\Omega, O))/4t$ is equal to $\left(\frac{2}{p}-1\right)\frac{n-1}{2}+\textrm{O}\left(\frac{r(t)}{t}\right)$ when $1\leq p <2$, while it is $\textrm{O}\left(\frac{r_2(t)}{t}\right)$ when $p=2$, and $\textrm{O}\left(\frac{R(t)}{t}\right)$ when $2<p\leq \infty$. 

It follows from  \eqref{eq: diff squares} and \eqref{eq: Busemann} that
\begin{align*}
	e^{-\frac{n-1}{2}(d(\Omega, \Omega')-d(\Omega, O))}e^{-\frac{d(\Omega, \Omega')^2-d(\Omega, O)^2}{4t}} \sim (\cosh s-\sinh s \, \omega\cdot \omega')^{-\frac{1}{p}(n-1)}, 
\end{align*}
when $\Omega\in \textbf{B}_p(t)$, $\Omega'\in B(O,\xi)$,   $1\leq p \leq 2$. This implies by \eqref{eq: heat hyperbolic} that
$$\frac{h_t(d(\Omega, \Omega'))}{h_t(d(\Omega,O))}\sim (\cosh s-\sinh s \, \omega\cdot \omega')^{-\frac{1}{p}(n-1)}, \quad \Omega\in \textbf{B}_p(t), \, \Omega'\in B(O,\xi), \quad  1\leq p \leq 2.$$
On the other hand, it follows from \cref{S4 Lemma ratios difference} that 
$$\frac{h_t(d(\Omega, \Omega'))}{h_t(d(\Omega,O))}\sim \frac{\varphi_{0}(d(\Omega, \Omega'))}{\varphi_{0}(d(\Omega, O))}, \quad \Omega\in \textbf{B}_p(t), \, \Omega'\in B(O,\xi), \quad  2\leq p \leq \infty.$$
Thus, for $u_0\in \mathcal{C}_c(\mathbb{H}^n)$, let us define the mass function by $$M_p(u_0)(\Omega)=\int_{0}^{\infty} ds \int_{\mathbb{S}^{n-1}}d\omega' \, u_0(s,\omega')\,(\cosh s-\sinh s \, \omega\cdot \omega')^{-\frac{1}{p}(n-1)} \quad \text{if} \quad 1\leq p\leq 2,$$ and by 
$$M_p(u_0)(\Omega)=\frac{1}{\varphi_{0}(d(\Omega, O))}\int_{\mathbb{H}^n} d\Omega' \, u_0(\Omega') \,\varphi_{0}(d(\Omega, \Omega')) \quad \text{if}  \quad 2\leq p\leq \infty.$$
Then, for radial $u_0\in\mathcal{C}_c(\mathbb{H}^n)$, the mass functions boil down to
$$M_p(u_0)(\Omega)=M_p=\int_{0}^{\infty} ds\,u_0 (s)\, \varphi_{i(\frac{2}{p}-1)\frac{n-1}{2}}(s)=\mathcal{H}u_0\left(\pm\, i(\frac{2}{p}-1)\frac{n-1}{2}\right), \quad 1\leq p < 2,$$
and  
$$M_p(u_0)(\Omega)=M_p=\int_{0}^{\infty} ds\,u_0 (s)\, \varphi_{0}(s)=\mathcal{H}f\left(0\right), \quad 2\leq p\leq \infty.$$

Following the arbitrary rank approach, now we have all the ingredients to show that for all initial data $u_0\in \mathcal{L}^p_{\text{rad}}(\mathbb{H}^n)$, or $u_0$ such that $\int_{\mathbb{H}^n} d\Omega\, |u_0(\Omega)|\, w_p(\Omega) <+\infty$, where $$w_p(\Omega)=e^{\frac{n-1}{p}d(\Omega, O)} \quad  \text{if} \quad 1\leq p \leq2 \quad \text{and} \quad  w_p(\Omega)=e^{\frac{n-1}{2}d(\Omega,O)} \quad \text{if} \quad 2\leq p \leq \infty,$$ the solution $u(t, \cdot)$ satisfies
$$\|h_t\|_p^{-1} \, \|u(t, \cdot) \, -\, M_p(u_0)(\cdot) \, f\|_{L^p(\mathbb{H}^n)}\rightarrow 0, \quad \text{as} \quad t\rightarrow +\infty.$$ 
	
	\bigskip
	
	\textbf{Competing interests}
	No competing interest is declared.

	\bigskip

	\textbf{Author contributions statement}
	Not applicable.

\bigskip

	\textbf{Acknowledgments.}
	The author would like to thank J.-Ph. Anker and A. Grigor'yan for their interest in the results, discussions and encouragement. This work is funded by the Deutsche Forschungsgemeinschaft (DFG, German Research Foundation)--SFB-Gesch{\"a}ftszeichen --Projektnummer SFB-TRR 358/1 2023 --491392403.   This manuscript is partly based upon work from COST Action CaLISTA CA21109 supported by COST (European Cooperation in Science and Technology). www.cost.eu. 
	
	\bigskip
	
	

	\printbibliography
	
	\vspace{10pt}
	\address{
		\noindent\textsc{Effie Papageorgiou:}
		\href{mailto:papageoeffie@gmail.com}
		{papageoeffie@gmail.com}\\
		Institut f{\"u}r Mathematik, Universit\"at Paderborn, Warburger Str. 100, D-33098
		Paderborn, Germany}
	
\end{document}